\documentclass[12pt]{amsart}

\usepackage{amsfonts, amsthm, amsmath, amssymb}

\usepackage{amscd}

\usepackage[latin2]{inputenc}

\usepackage{t1enc}

\usepackage[mathscr]{eucal}

\usepackage{indentfirst}

\usepackage{graphicx}

\usepackage{graphics}

\usepackage{pict2e}

\usepackage{mathrsfs}

\usepackage{framed}

\newcommand{\tabincell}[2]{\begin{tabular}{@{}#1@{}}#2\end{tabular}}

\usepackage{mathabx}
\usepackage{booktabs}
\usepackage{makecell}
\usepackage{enumerate}
\usepackage[pagebackref]{hyperref}
\hypersetup{colorlinks=true}
\usepackage{cite}
\usepackage{color}
\usepackage{epic}
\usepackage{hyperref} 
\numberwithin{equation}{section}
\topmargin 0.8in
\textheight=8.2in
\textwidth=6.4in
\voffset=-.68in
\hoffset=-.68in

\theoremstyle{plain}

\usepackage{tikz}

\usetikzlibrary{calc}
\usetikzlibrary{shapes.geometric}


\tikzset{
  c/.style={every coordinate/.try}
}


\usetikzlibrary{arrows,shapes,positioning}
\usetikzlibrary{decorations.markings}
\tikzstyle arrowstyle=[scale=1]
\tikzstyle directed=[postaction={decorate,decoration={markings,mark=at position 0.6 with {\arrow[arrowstyle]{stealth};}}}]
\tikzstyle reverse directed=[postaction={decorate,decoration={markings,mark=at position 0.4 with {\arrowreversed[arrowstyle]{stealth};}}}]
\tikzstyle dot=[style={circle,inner sep=1pt,fill}]

\newtheorem{theorem}{Theorem}[section]

\newtheorem{lemma}[theorem]{Lemma}

\newtheorem{corollary}[theorem]{Corollary}

\newtheorem{proposition}[theorem]{Proposition}

\newtheorem{Fact}[theorem]{Fact}

\theoremstyle{definition}

\newtheorem{Def}[theorem]{Definition}

\newtheorem{example}[theorem]{Example}

\newtheorem{remark}[theorem]{Remark}

\newtheorem{?}[theorem]{Problem}

\newcommand{\Z}{\mathbb{Z}}
\def\FV{\mathrm{FV}}
\def\FZ{\mathrm{FZ}}
\def\YZL{\mathrm{YZL}}
\def\CSZ{\mathrm{CSZ}}
\def\SZ{\mathrm{SZ}}
\def\SS{\mathfrak{S}}
\def\cc{\mathrm{c}}
\def\r{\mathrm{r}}

\def\i{\mathrm{i}}
\def\rci{\mathrm{rci}}

\def\LH{\mathfrak{L}}
\def\N{\mathrm{N}}
\def\S{\mathrm{S}}
\def\E{\mathrm{E}}
\def\dE{\mathrm{dE}}
\def\NE{\mathrm{NE}}
\def\SE{\mathrm{SE}}
\def\SdE{\mathrm{SdE}}
\def\NdE{\mathrm{NdE}}
\def\cs{\mathrm{cs}}
\def\neb{\mathrm{neb}}
\def\nea{\mathrm{nea}}
\def\sdeb{\mathrm{sdeb}}
\def\sdea{\mathrm{sdea}}
\def\nde{\mathrm{nde}}

\def\het{\mathrm{ht}}
\def\wt{\mathrm{wt}}
\def\Neb{\mathrm{Neb}}
\def\Nea{\mathrm{Nea}}
\def\Sdeb{\mathrm{Sdeb}}
\def\Sdea{\mathrm{Sdea}}
\def\Ndeb{\mathrm{Ndeb}}
\def\Ndea{\mathrm{Ndea}}
\def\Nde{\mathrm{Nde}}

\def\Het{\mathrm{Ht}}
\def\Wt{\mathrm{Wt}}
\def\Asc{\mathrm{Asc}}
\def\asc{\mathrm{asc}}

\def\des{\mathrm{des}}
\def\ides{\mathrm{ides}}
\def\Des{\mathrm{Des}}
\def\Db{\mathrm{Db}}
\def\Ab{\mathrm{Ab}}
\def\Id{\mathrm{Ides}}
\def\Dt{\mathrm{Dt}}
\def\Ddif{\mathrm{Ddif}}
\def\ddif{\mathrm{ddif}}
\def\Dbot{\mathrm{Dbot}}
\def\dbot{\mathrm{dbot}}
\def\Dtb{\mathrm{Dtb}}
\def\Dta{\mathrm{Dta}}
\def\Dbb{\mathrm{Dbb}}
\def\Dba{\mathrm{Dba}}
\def\Abb{\mathrm{Abb}}
\def\Aba{\mathrm{Aba}}
\def\Lpk{\mathrm{Lpk}}
\def\Lval{\mathrm{Lval}}
\def\Lda{\mathrm{Lda}}
\def\Ldd{\mathrm{Ldd}}
\def\lpk{\mathrm{lpk}}
\def\lval{\mathrm{lval}}
\def\lda{\mathrm{lda}}
\def\ldd{\mathrm{ldd}}
\def\pprs{\mathrm{2\underline{13}}}
\def\ppwrs{\mathrm{2\underline{31}}}
\def\pp31-2{\mathrm{\underline{31}2}}
\def\rightascent{{\mathbf{2}\underline{13}}}
\def\leftdescent{\mathrm{\underline{31}}\mathbf{2}}
\def\rightdescent{\mathrm{\mathbf{2}\underline{31}}}

\def\Edif{\mathrm{Edif}}
\def\edif{\mathrm{edif}}
\def\Excp{\mathrm{Ep}}
\def\Exc{\mathrm{Exc}}
\def\Nexc{\mathrm{Nexc}}
\def\Nexcp{\mathrm{Nep}}
\def\Excpb{\mathrm{Epb}}
\def\Excpa{\mathrm{Epa}}
\def\Nexcb{\mathrm{Nexcb}}
\def\Nexca{\mathrm{Nexca}}
\def\Excb{\mathrm{Excb}}
\def\Exca{\mathrm{Exca}}
\def\Ine{\mathrm{Ine}}
\def\ine{\mathrm{ine}}

\def\exc{\mathrm{exc}}
\def\Ebot{\mathrm{Ebot}}
\def\ebot{\mathrm{ebot}}
\def\Cpk{\mathrm{Cpk}}
\def\Cval{\mathrm{Cval}}
\def\Cda{\mathrm{Cda}}
\def\Cdd{\mathrm{Cdd}}

\def\Scpk{\mathrm{Scpk}}
\def\Scval{\mathrm{Scval}}
\def\Scda{\mathrm{Scda}}
\def\Scdd{\mathrm{Scdd}}
\def\nep{\mathrm{nep}}
\def\nexc{\mathrm{nexc}}

\def\Vnexpb{\mathrm{Vnepb}}
\def\Vnexpa{\mathrm{Vnepa}}
\def\Vnexb{\mathrm{Vnexb}}
\def\Vnexa{\mathrm{Vnexa}}
\def\Vexpb{\mathrm{Vepb}}
\def\Vexpa{\mathrm{Vepa}}
\def\Vedif{\mathrm{Vedif}}
\def\Vbot{\mathrm{Vbot}}
\def\Vnex{\mathrm{Vnex}}
\def\Vnest{\mathrm{Vnest}}

\def\vbot{\mathrm{vbot}}
\def\vedif{\mathrm{vedif}}
\def\pone{\mathrm{pone}}
\def\vnest{\mathrm{vnest}}

\def\Inv{\mathrm{Inv}}
\def\Mak{\mathrm{Mak}}
\def\Mad{\mathrm{Mad}}
\def\Den{\mathrm{Den}}
\def\Makl{\mathrm{Makl}}
\def\Madl{\mathrm{Madl}}
\def\FZthree{\mathrm{FZ3}}
\def\FZfour{\mathrm{FZ4}}
\def\YZLone{\mathrm{YZL1}}
\def\YZLtwo{\mathrm{YZL2}}
\def\YZLthree{\mathrm{YZL3}}
\def\YZLfour{\mathrm{YZL4}}

\def\inv{\mathrm{inv}}
\def\den{\mathrm{den}}
\def\mak{\mathrm{mak}}
\def\mad{\mathrm{mad}}
\def\mak{\mathrm{mak}}
\def\inv{\mathrm{inv}}
\def\makl{\mathrm{makl}}
\def\madl{\mathrm{madl}}
\def\fzthree{\mathrm{fz3}}
\def\fzfour{\mathrm{fz4}}
\def\yzlone{\mathrm{yzl1}}
\def\yzltwo{\mathrm{yzl2}}
\def\yzlthree{\mathrm{yzl3}}
\def\yzlfour{\mathrm{yzl4}}
\def\maj{\mathrm{maj}}
\def\bast{\mathrm{bast}}
\def\foze{\mathrm{foze}}
\def\sist{\mathrm{sist}}
\def\sort{\mathrm{sor}}


\def\sist{\mathrm{sist}}
\def\nest{\mathrm{nest}}

\def\boxit#1{\leavevmode\hbox{\vrule\vtop{\vbox{\kern.33333pt\hrule
    \kern1pt\hbox{\kern1pt\vbox{#1}\kern1pt}}\kern1pt\hrule}\vrule}}

\usepackage{collectbox}

\makeatletter

\makeatother

\begin{document}

\title[An involution on restricted Laguerre histories and its
applications]{An involution on restricted Laguerre histories and its
applications}

\author[J.~N.~Chen]{Joanna N. Chen}
\address{College of Science, Tianjin University of Technology, Tianjin 300384, P.R. China.}
\email{joannachen@tjut.edu.cn}

\author[S.~Fu]{Shishuo Fu}
\address{College of Mathematics and Statistics, Chongqing University, Chongqing 401331, P.R. China.}
\email{fsshuo@cqu.edu.cn}

\date{\today}

\begin{abstract}
Laguerre histories (restricted or not) are certain weighted Motzkin paths with two types of level steps. They are, on one hand, in natural bijection with the set of permutations, and on the other hand, yield combinatorial interpretations for the moments of Laguerre polynomials via Flajolet's combinatorial theory of continued fractions. In this paper, we first introduce a reflection-like involution on restricted Laguerre histories. Then, we demonstrate its power by composing this involution with three bijections due to Fran\c con-Viennot,  Foata-Zeilberger, and Yan-Zhou-Lin, respectively. A host of equidistribution results involving various (multiset-valued) permutation statistics follow from these applications. As byproducts, seven apparently new Mahonian statistics present themselves; new interpretations of known Mahonian statistics are discovered as well. Finally, in our effort to show the interconnections between these Mahonian statistics, we are naturally led to a new link between the variant Yan-Zhou-Lin bijection and the Kreweras complement.
\end{abstract}

\keywords{}

\maketitle


\section{Introduction}\label{sec:intro}

One interesting aspect of the orthogonal polynomials is the study of their moment sequences. For example, building on Flajolet's celebrated combinatorial theory of Jacobi continued fractions \cite{Fla}, Viennot has detailed in his memoir \cite{Vie} the links between various classical orthogonal polynomials (such as Hermite, Charlier, and Laguerre polynomials) and classical sequences in combinatorics. Such connections are valuable as they bear combinatorial information which naturally lead to refinements and generalizations of the enumerative results for the combinatorial objects involved. See for example \cite{CSZ,Cor,FV,FZ90,SS,KZ,SZ12,YZL,HMZ}, especially the two surveys by Corteel--Kim--Stanton \cite{CKS}, and Zeng \cite{Zen} and the references therein.

Sitting at the heart of Viennot's constructions was the notion of ``histories'', tailormade for each specific family of orthogonal polynomials. The main object considered in this paper is one kind of such histories, namely, the {\em (restricted) Laguerre histories}. Loosely speaking (see Section~\ref{sec:Preliminaries} for the precise definition), these are Motzkin paths where the east steps could come in two types (solid or dotted), and each step starting at height $h$ in the path is assigned a unique integer $i\in\{0,\ldots, h\}$.

Our main result, Theorem~\ref{thm:inv-xsi}, is a reflection-like involution constructed on the set of all shifted and restricted Laguerre histories (see Definition~\ref{def:LH}). This involution is subject to certain constraints, and consequently yields a wealth of information on various statistics. If we consider the following multivariate generating function
\begin{align}\label{gf:A}
A(z)&=A(t_1,t_2,t_3,t_4,r,s,x,v,w;z)\\
&:=\sum_{n\ge 1}\sum_{W\in\LH_n}t_1^{\sdeb\:W}t_2^{\sdea\:W}t_3^{\neb\:W}t_4^{\nea\:W}r^{\nde\:W}s^{\asc\:W}x^{\cs\:W}v^{\het\:W}w^{\wt\:W}z^n \nonumber\\
&=\sum_{n\ge 1}A_n(t_1,t_2,t_3,t_4,r,s,x,v,w)z^n. \nonumber
\end{align}
where $\LH_n$ stands for the set of all shifted and restricted Laguerre histories of length $n$ and the definitions of all the statistics will be given in the next section. Then a quick corollary of Theorem~\ref{thm:inv-xsi} is the following property enjoyed by $A(z)$.
\begin{corollary}\label{coro:gf of A}
\begin{align}\label{eq:A var change}
A(t_1,t_2,t_3,t_4,r,s,x,v,w;z) &=
\frac{x}{rs} A(t_4,\frac{t_3}{vw},vwt_2,t_1,\frac{1}{r},\frac{1}{s},\frac{1}{x},v,w;rsxz).
\end{align}
\end{corollary}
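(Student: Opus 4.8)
The plan is to derive Corollary~\ref{coro:gf of A} directly from the reflection-like involution of Theorem~\ref{thm:inv-xsi}. Concretely, I would take the involution $\Phi$ on $\bigcup_{n\ge 1}\LH_n$ provided by that theorem and track how each of the nine statistics appearing in \eqref{gf:A} transforms under $\Phi$. The theorem, being a ``reflection-like'' map subject to constraints, should come equipped with an exact dictionary: for a history $W\in\LH_n$ with image $W'=\Phi(W)\in\LH_n$, one expects relations of the shape $\sdeb W'=\sdea W$, $\sdea W'=\neb W$ (up to the $vw$ twist), $\neb W'=\nea W$, $\nea W'=\sdeb W$, $\nde W'=\nde W$ (since $r$ and $1/r$ pair off against the overall $r$-power), $\asc W'=\asc W$, $\cs W'=\cs W$, while the height and weight statistics $\het,\wt$ are preserved outright. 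The factor $rsx$ multiplying $z$ reflects that $\Phi$ shifts the length-grading by a controlled amount, or equivalently that $n$ is not preserved but the combination $rsxz^n$-type bookkeeping is; more likely, $\Phi$ preserves $n$ and the prefactor $\tfrac{x}{rs}$ together with the $rsx$ rescaling of $z$ encodes that certain statistics are replaced by their ``complements'' (e.g.\ $\cs W' = \cs_{\max} - \cs W$ type identities, which on the level of generating functions becomes $x\mapsto 1/x$ after pulling out a normalizing monomial).

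The key steps, in order, are as follows. First, I would state precisely the statistic-transfer lemma that accompanies Theorem~\ref{thm:inv-xsi}: for every $W\in\LH_n$, write out the nine equalities relating the statistics of $\Phi(W)$ to those of $W$, including the exact exponents of the ``correction'' monomial (the one responsible for the $\tfrac{x}{rs}$ and the $rsx$). Second, I would substitute $W\mapsto \Phi(W)$ into the defining sum \eqref{gf:A}. Since $\Phi$ is a bijection of $\LH_n$ onto itself (an involution, in particular), the sum is unchanged, so $A_n(t_1,\dots,w)=\sum_{W\in\LH_n}(\text{monomial in the statistics of }\Phi(W))$. Third, I would rewrite that right-hand side using the transfer lemma, collecting all $W$-dependent factors; by construction this reassembles into $A_n$ evaluated at the permuted/inverted arguments $(t_4,\tfrac{t_3}{vw},vwt_2,t_1,\tfrac1r,\tfrac1s,\tfrac1x,v,w)$, times a single global monomial in $r,s,x$ coming from the correction terms. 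Fourth, I would sum over $n$, matching the global $r,s,x$-monomial and the length rescaling to produce exactly the factor $\tfrac{x}{rs}$ and the argument $rsxz$ on the right-hand side of \eqref{eq:A var change}.

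The main obstacle I anticipate is bookkeeping the ``correction monomial'' correctly — that is, pinning down exactly which power of $r$, $s$, and $x$ (and whether any $v,w$) is picked up when passing from $W$ to $\Phi(W)$, since this is what distinguishes the clean statement \eqref{eq:A var change} from a messier one. This requires reading off, from the construction in Theorem~\ref{thm:inv-xsi}, how the ``shift'' parameter of a shifted restricted Laguerre history and the solid-versus-dotted east-step count behave; in particular one must verify that $\nde$, $\asc$, and the total length interact with the reflection so that the surplus is precisely $x\cdot r^{-1}s^{-1}$ per history and $(rsx)^n$ per length-$n$ term. Once that single monomial is identified, the rest is a formal substitution. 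A secondary (but routine) check is that all nine transformed arguments indeed land in the domain where $A$ converges as a formal power series, i.e.\ that the substitution $z\mapsto rsxz$ keeps everything in $\Z[t_i^{\pm},r^{\pm},s^{\pm},x^{\pm},v,w][[z]]$, which is immediate since $A$ is a power series in $z$ with polynomial coefficients.
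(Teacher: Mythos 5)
Your proposal follows the paper's proof exactly: the paper derives \eqref{eq:A var change} by substituting the statistic-transfer relations of Corollary~\ref{coro:xi-stats} (in numerical form) into \eqref{gf:A} and using that $\xi$ is an involution, hence a bijection of $\LH_n$ onto itself. Note that several entries of your provisional dictionary are off --- in the actual relations $\het$ and $\wt$ are \emph{not} preserved but shift by $\neb-\sdea$ (whence the $vw$ twist on $t_2$ and $t_3$), and $\nde$, $\asc$, $\cs$ are complemented to $n-1-\nde$, $n-1-\asc$, $n+1-\cs$ (whence the prefactor $x/(rs)$ and the rescaling $z\mapsto rsxz$) --- but you explicitly flag this bookkeeping as the step to pin down, and it is exactly what Corollary~\ref{coro:xi-stats} supplies.
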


Note that as we will show in Section~\ref{sec:app}, the nine-variate polynomial $A_n(t_1,\ldots,w)$ encompasses a host of familiar polynomials in the literature. Let $\SS_n$ be the set of permutations of $[n]:=\{1,\ldots,n\}$. We have for instance (again, all the statistics will be defined in Section~\ref{sec:app}),
\begin{itemize}
  \item $A_n(t,t,1,1,1,1,1,1,1)=\sum_{\pi\in\SS_n}t^{\des(\pi)}$,
  \item $A_n(t,t,1,1,1,s,1,1,1)=\sum_{\pi\in\SS_n}t^{\des(\pi)}s^{\ides(\pi)}$,
  \item $A_n(t,tp^{-1},1,p^{-1},1,1,1,q,pq^{-1})=\sum_{\pi\in\SS_n}t^{\des(\pi)}p^{\pprs(\pi)}q^{\pp31-2(\pi)}$,
  \item $\lim_{p\rightarrow 0}A_n(t,tp^{-1},1,p^{-1},1,1,1,q,pq^{-1})=\sum_{\pi\in\SS_n(213)}t^{\des(\pi)}q^{\pp31-2(\pi)}$
\end{itemize}
are the classical {\it Eulerian polynomial}, the {\it double Eulerian polynomial} studied by Carlitz--Roselle--Scoville \cite{CRS}, Gessel \cite{Bra08,Bra15}, and Lin \cite{Lin}, a {\it $(p,q)$-Eulerian polynomial} studied by Br\"and\'en \cite{Bra08}, and Shin--Zeng \cite{SZ12}, a {\it $(q,t)$-Catalan number} studied by Blanco--Petersen \cite{BP}, Lin--Fu \cite{LF}, and Fu--Tang--Han--Zeng \cite{FTHZ}, respectively.

The rest of the paper is organized as follows. In section~\ref{sec:Preliminaries}, we briefly review the background of Laguerre polynomials and their sequence of moments to motivate our study on Laguerre histories. We then prove our main Theorem~\ref{thm:inv-xsi} and several corollaries, including Corollary~\ref{coro:gf of A} above, in section~\ref{sec:involution}. After recalling and rephrasing three bijections between $\SS_n$ and $\LH_n$ and introducing the respective permutation statistics transformed by the three bijections, we derive in section~\ref{sec:app} many equidistribution results by composing these bijections with our involution $\xi$ constructed in Theorem~\ref{thm:inv-xsi}. In particular, apparently new Mahonian statistics present themselves naturally, connections with known Mahonian statistics are established as well; see subsection~\ref{subsec:Mahonian}, especially Table~\ref{Mahonian stats} for more details.

\section{Background and Preliminaries}\label{sec:Preliminaries}

The {\it Laguerre polynomials} \cite[Sect.~6.2]{AAR} are orthogonal with respect to the gamma distribution $x^{\alpha}e^{-x}dx$, where $\alpha >-1$. When multiplied by $(-1)^n n!$, the {\it monic Laguerre polynomials} $L_n^{(\alpha)}(x)$ can be defined either using their generating function
\begin{align*} 
\sum_{n=0}^{\infty}L_n^{(\alpha)}(x)\frac{t^n}{n!} &=(1+t)^{-\alpha-1}\exp(\frac{xt}{1+t}),
\end{align*}
or via the explicit formula
\begin{align*} 
L_n^{(\alpha)}(x)=\sum_{k=0}^{n}(-1)^{n-k}(\alpha+k+1)_{n-k}\binom{n}{k}x^k,
\end{align*}
where $(x)_n=x(x+1)\cdots(x+n-1)~(n\ge 1)$ is the rising factorial with $(x)_0=1$, and $\binom{n}{k}=\frac{n!}{k!(n-k)!}$ is the usual binomial coefficient. The linear functional $\mathcal{L}$ with respect to which $L_n^{(\alpha)}(x)$ are orthogonal is defined by
\begin{align*}
\mathcal{L}(f)=\frac{1}{\Gamma(\alpha+1)}\int_{0}^{\infty}f(x)x^{\alpha}e^{-x}dx,
\end{align*}
thus $\mathcal{L}(x^n)=(\alpha+1)_n$ are the moments of $L_n^{(\alpha)}(x)$.

It is a classical result (see e.g.~\cite[Thm.~5.8.2]{AAR}) that if $\{p_n(x)\}$ is an orthogonal polynomial sequence (with respect to certain linear functional $\mathcal{L}$) satisfying the three-term recurrence
\begin{align*}
p_{n+1}(x)=(x-b_n)p_n(x)-\lambda_np_{n-1}(x), \text{ for } n\ge 1,
\end{align*}
then its moment-generating function has the following Jacobi continued fraction expansion
\begin{align}
\label{moment-gf and cf}
\sum_{n\ge 0}\mathcal{L}(x^n)z^n=\cfrac{1}{1-b_0z-\cfrac{\lambda_1z^2}{1-b_1z-\cfrac{\lambda_2z^2}{1-b_2z-\cfrac{\lambda_3z^2}{\ddots}}}}.
\end{align}

In the case of monic Laguerre polynomials, we have $b_n=2n+\alpha+1$ and $\lambda_n=n(n+\alpha)$. The next two special cases are the most interesting.
\begin{itemize}
  \item[$\alpha=1$:] the moment is $\mathcal{L}(x^n)=(n+1)!$, with $b_n=2n+2$, $\lambda_n=n(n+1)$;
  \item[$\alpha=0$:] the moment is $\mathcal{L}(x^n)=n!$, with $b_n=2n+1$, $\lambda_n=n^2$.
\end{itemize}

To understand \eqref{moment-gf and cf} combinatorially for $\alpha=1$ (resp.~$\alpha=0$), it suffices to set up a bijection from $\SS_{n+1}$ (resp.~$\SS_n$), which has cardinality $(n+1)!$ (resp.~$n!$), to the set of certain weighted $2$-Motzkin paths of length $n$ called Laguerre histories (resp.~restricted Laguerre histories), which are generated by the continued fraction on the right side of \eqref{moment-gf and cf} according to the Flajolet--Viennot theory. Such a bijection was first constructed by Fran\c con--Viennot \cite{FV}. This will be elaborated on in Section~\ref{sec:app}. We now give the precise definitions of the histories.

A {\em Motzkin path} of length $n$ is a lattice path in the first quadrant starting from $(0,0)$, ending at $(n,0)$, with three possible steps: northeast steps ($\N$) going from $(a,b)$ to $(a+1,b+1)$; southeast steps ($\S$) going from $(a,b)$ to $(a+1,b-1)$; and east steps ($\E$) going from $(a,b)$ to $(a+1,b)$. A {\em $2$-Motzkin path} is a Motzkin path whose east steps could come in two types: solid ($\E$) or dotted ($\dE$). The $2$-Motzkin paths will be represented as words consisted of letters from $\{\N,\S,\E,\dE\}$. For convenience, we will also label a step as $\NE$ (resp.~$\SdE$), if it is either a northeast (resp.~southeast) step or a solid (resp.~dotted) east step. The labels $\NdE$ and $\SE$ will be used similarly if needed.

A {\em Laguerre history}  of length $n$ is a pair $(w,c)$ such that
$w=w_1 \cdots w_n$ is a 2-Motzkin path and $c=(c_1,\cdots, c_n)$
is a choice vector with $0 \leq c_i \leq h_i$, where
\[h_i := \#\{j : j<i, w_j=\N\}-\#\{j : j<i, w_j=\S\}\]
is the height of the $i$-th step of $w$.
A {\em restricted  Laguerre history} is usually defined as
the pair $(w,c)$ such that
$$0 \leq  c_i \leq \begin{cases}
  h_i, & \text{if } w_i=\NE,\\
  h_i-1, & \text{if } w_i=\SdE.
  \end{cases}$$
Let $\mathtt{L}_n$ (resp.~$\mathtt{L}_n^*$) be the set of
Laguerre histories (resp.~restricted Laguerre histories) of length $n$.
It is well known that
the cardinality of $\mathtt{L}_n$ (resp.~$\mathtt{L}_n^*$) is $(n+1)!$ (resp.~$n!$), so it corresponds to the Laguerre polynomial $L_n^{(1)}(x)$ (resp.~$L_n^{(0)}(x)$).

As it turns out, our involution is better explained using the following shifted version of restricted Laguerre histories.

\begin{Def}\label{def:LH}
A {\it (shifted and restricted) Laguerre history} of length $n$ is a triple $W=(w,h,c)$, such that
\begin{enumerate}
  \item $w=w_1\cdots w_n$ is a $2$-Motzkin path of length $n$.
  \item $h=h_1\cdots h_n$ with $h_i:=\#\{j : j<i, w_j=\N\}-\#\{j : j<i, w_j=\S\}$.
  \item $c=c_1\cdots c_n$ with $h_i\ge c_i\ge \begin{cases}
  0, & \text{if } w_i=\NE,\\
  1, & \text{if } w_i=\SdE.
  \end{cases}$
\end{enumerate}
For each $i=1,2,\ldots,n$, we say the $i$-th step of $W$ is of type $w_i$, height $h_i$, and weight $c_i$.
\end{Def}

\begin{remark}
Note that our choice for the bounds in (3) is different (shifted by $1$ for $\SdE$ steps) from the definition of restricted Laguerre histories, and it forces east steps on the $x$-axis to be all solid. Despite of the fact that $h$ is completely determined by $w$, we prefer the triple $(w,h,c)$ to the pair $(w,c)$ in our definition, so that the role played by $h$ is emphasized and it makes our later construction clearer.
\end{remark}

In what follows, we will abbreviate ``shifted and restricted Laguerre histories'' as ``sr-Laguerre histories''. The set of all sr-Laguerre histories of length $n$ is denoted as $\LH_n$. Clearly, we have $|\LH_n|=|\mathtt{L}_n^*|=n!$. See Fig.~\ref{fig:reslagu} for a concrete example of sr-Laguerre history.

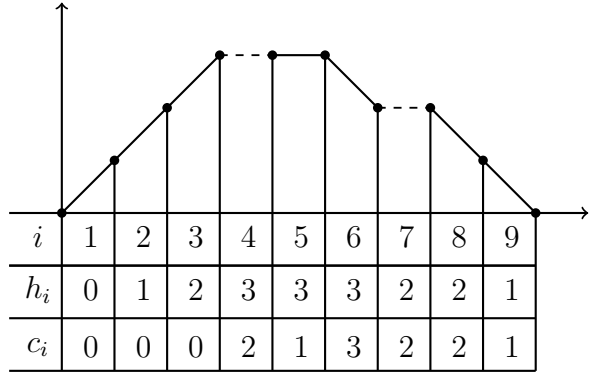
\begin{figure}[!htbp]
\begin{center}
\begin{tikzpicture}[line width=0.7pt,scale=0.7]
\coordinate (O) at (0,0);
\draw[thick][->] (O)++(-1,0)--++(11,0);
\draw[thick][->] (O)++(0,-3)--++(0,7);
\draw[thick] (O)--++(3,3)++(1,0)--++(1,0)--++(1,-1)++(1,0)--++(2,-2);
\draw[dashed](O)++(3,3)--++(1,0)++(2,-1)--++(1,0);

\fill[black!100] (O) circle(0.5ex) ++(1,1)circle(0.5ex)
++(1,1)circle(0.5ex) ++(1,1)circle(0.5ex)++(1,0)circle(0.5ex)
++(1,0)circle(0.5ex)++(1,-1)circle(0.5ex)++(1,0)circle(0.5ex)
++(1,-1)circle(0.5ex)++(1,-1)circle(0.5ex);
\draw[thick]  (O)++(-1,-1)--++(10,0);
\draw[thick]  (O)++(-1,-2)--++(10,0);
\draw[thick]  (O)++(-1,-1)--++(10,0);
\draw[thick]  (O)++(-1,-3)--++(10,0);
\draw[thick]  (O)++(1,-3)--++(0,4);
\draw[thick]  (O)++(2,-3)--++(0,5);
\draw[thick]  (O)++(3,-3)--++(0,6);
\draw[thick]  (O)++(4,-3)--++(0,6);
\draw[thick]  (O)++(5,-3)--++(0,6);
\draw[thick]  (O)++(6,-3)--++(0,5);
\draw[thick]  (O)++(7,-3)--++(0,5);
\draw[thick]  (O)++(8,-3)--++(0,4);
\draw[thick]  (O)++(9,-3)--++(0,3);

\path (-0.45,-0.45) node {$i$} ++(1,0) node {$1$} ++(1,0) node {$2$} ++(1,0) node {$3$} ++(1,0) node {$4$} ++(1,0) node {$5$} ++(1,0) node {$6$} ++(1,0) node {$7$} ++(1,0) node {$8$} ++(1,0) node {$9$};
\path (-0.45,-1.45) node {$h_i$} ++(1,0) node {$0$} ++(1,0) node {$1$} ++(1,0) node {$2$} ++(1,0) node {$3$} ++(1,0) node {$3$} ++(1,0) node {$3$} ++(1,0) node {$2$} ++(1,0) node {$2$} ++(1,0) node {$1$};
\path (-0.45,-2.55) node {$c_i$} ++(1,0) node {$0$} ++(1,0) node {$0$} ++(1,0) node {$0$} ++(1,0) node {$2$} ++(1,0) node {$1$} ++(1,0) node {$3$} ++(1,0) node {$2$} ++(1,0) node {$2$} ++(1,0) node {$1$};
\end{tikzpicture}
\caption{An sr-Laguerre history of length $9$.}
\label{fig:reslagu}
\end{center}
\end{figure}

\section{An involution on sr-Laguerre histories}\label{sec:involution}
We are going to construct an involution over $\LH_n$. Given a permutation $\pi=\pi(1)\cdots\pi(n)\in\SS_n$, its last entry $\pi(n)$ can sometimes be viewed as an auxiliary statistic. In particular, it will help us link $(2\underline{31})\pi$ with $(2\underline{13})\pi$ (see Section~\ref{subsec:FV} for the definition of vincular patterns $2\underline{31}$ and $2\underline{13}$). The following definition gives a counterpart of ``last entry'' over the sr-Laguerre histories, which will be crucial for our later use.

\begin{Def}
Given an sr-Laguerre history $W=(w,h,c)\in\LH_n$, we define its {\it critical step} to be the last step of $W$ that is weighted by $0$. We use
\begin{align}
\cs(W)& :=\max\{i\in[n] : c_i=0\}
\end{align}
to denote the index of the critical step of $W$.
\end{Def}

Note that the first step of any Laguerre history must have height $0$, thus is weighted by $0$, so $\cs(W)$ is well-defined. Now we can state the main result of this paper.

\begin{theorem}\label{thm:inv-xsi}
There exists a unique involution $\xi:\LH_n\rightarrow \LH_n$ such that, if $W=(w,h,c)$ with $\cs(W)=m$ and $V:=\xi(W)=(v,g,b)$, then
\begin{enumerate}
	\item $\cs(V)=n+1-m$.
	\item For any $n+1-m\neq j\in [n]$, $v_j=\NE$ if and only if $w_{n+1-j}=\SdE$.
	\item For any $j\in[n]$, $g_j=\begin{cases}
	h_{n+1-j}+1, & \text{if $j>n+1-m$ and $v_j=\SdE$,} \\
	h_{n+1-j}-1, & \text{if $j<n+1-m$ and $v_j=\NE$,} \\
	h_{n+1-j}, & \text{otherwise}.
	\end{cases}$
	\item For any $j\in[n]$, $b_j=g_j-h_{n+1-j}+c_{n+1-j}$.
\end{enumerate}
\end{theorem}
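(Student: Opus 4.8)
The plan is to prove existence by giving an explicit construction of $V = \xi(W)$ from conditions (1)--(4), then verify that the result is a legitimate sr-Laguerre history, and finally check that applying the same recipe to $V$ recovers $W$ (which gives both well-definedness of $\xi$ as a map into $\LH_n$ and the involution property); uniqueness is immediate since (1)--(4) determine $V$ completely once one knows they are consistent. Concretely, set $m = \cs(W)$ and $m' = n+1-m$. First I would define the word $v$: put $v_{m'}$ to be the step type forced by (1) together with the requirement that the $m'$-th step be weighted $0$ (so $v_{m'} = \NE$, in fact a solid east step or a northeast step; one must check the critical-step condition $b_{m'} = 0$ is achievable, i.e. that $g_{m'}$ works out so that $c$-bound is met), and for $j \neq m'$ let $v_j = \N$ or $\E$ according to whether $w_{n+1-j} = \S$ or $\dE$, and $v_j = \S$ or $\dE$ according to whether $w_{n+1-j} = \N$ or $\E$ — i.e. reverse the path and swap the ``NE-family'' with the ``SdE-family'', keeping the solid/dotted distinction as recorded by the labels $\NE,\SdE,\NdE,\SE$. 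Condition (2) is exactly this rule. Then $g$ is forced to be the height word of $v$, and the content of condition (3) is the assertion that this genuine height word of $v$ equals the piecewise expression in terms of the $h_{n+1-j}$; I would verify this by a running-sum argument, tracking how reversing the path and swapping $\N \leftrightarrow \S$ transforms partial sums, with the index $m'$ acting as the pivot where the $\pm 1$ corrections switch on and off. Finally $b$ is defined by (4), and one checks $b_j$ lies in the correct range $g_j \ge b_j \ge [v_j = \SdE]$.

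The key computational lemma is the height identity in (3). Reversing a Motzkin-type path sends the prefix statistics to suffix statistics, and swapping $\N \leftrightarrow \S$ negates the height increments; combining these, the raw reversed-and-swapped height at position $j$ is essentially $h_{n+1-j}$ up to a boundary correction coming from the single anomalous step at position $m'$ (whose type is not simply the swap of $w_m$, because $w_m$ was weighted $0$ and need not be an $\SdE$ step after swapping). I expect this bookkeeping — pinning down exactly the $+1$ for $j > m'$, $-1$ for $j < m'$, and $0$ otherwise, including the subtle cases $j = m'$ and $j = n+1-m'$ — to be the main obstacle, since it requires carefully separating the contribution of the pivot step from the rest of the path and checking that the resulting $g$ is non-negative everywhere (so that $v$ really is a lattice path in the first quadrant) and returns to $0$ at the end.

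For the range check on $b$, note from (4) that $b_j - [\text{lower bound}]$ and $c_{n+1-j} - [\text{lower bound at } n+1-j]$ differ by a controlled amount dictated by (3): when $v_j = \NE$ and $j < m'$ we have $g_j = h_{n+1-j} - 1$ while $w_{n+1-j} = \SdE$ has lower bound $1$, so $b_j = c_{n+1-j} - 1 \ge 0$ and $b_j \le g_j$ follows from $c_{n+1-j} \le h_{n+1-j}$; the symmetric case $v_j = \SdE$, $j > m'$ is handled the same way, and the ``otherwise'' case is immediate. The critical step at $m'$ needs $b_{m'} = 0$, which by (4) reads $c_m = h_{n+1-m'}$; but $n+1-m' = m$, so this says $c_m = h_m$, and one shows directly from the construction of $g$ and the fact that $c_m = 0$ that this holds (equivalently, that $g_{m'} = 0$).

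To finish, I would apply the construction to $V$ and verify it yields $W$: condition (1) gives $\cs(\xi(V)) = n+1-(n+1-m) = m = \cs(W)$; condition (2) for $\xi(V)$ reverses and swaps $v$ back, recovering $w$ off the pivot, and the pivot matches because critical steps are preserved; condition (3) is self-inverse since the $\pm 1$ corrections are swapped consistently under the involution (a short check that $h \mapsto g \mapsto h$); and (4) similarly round-trips because $b_j = g_j - h_{n+1-j} + c_{n+1-j}$ rearranges to $c_{n+1-j} = h_{n+1-j} - g_j + b_j$, which is exactly what (4) for $\xi(V)$ demands. This simultaneously establishes that $\xi$ is well-defined, that $\xi^2 = \mathrm{id}$, and hence uniqueness.
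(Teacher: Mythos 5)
Your overall architecture (build $V$ explicitly from conditions (1)--(4), check it lies in $\LH_n$, check the round trip) is the same as the paper's, but your explicit construction of the word $v$ is wrong, and this breaks the argument. You set $v_j=\N,\E,\S,\dE$ according as $w_{n+1-j}=\S,\dE,\N,\E$, i.e.\ you reverse the path and swap $\N\leftrightarrow\S$, $\E\leftrightarrow\dE$ while ``keeping the solid/dotted distinction.'' Condition (2) does tie the \emph{family} ($\NE$ versus $\SdE$) of $v_j$ to that of $w_{n+1-j}$, but the \emph{exact} type of $v_j$ within its family is dictated by the increment $g_{j+1}-g_j$ forced by condition (3), and that increment equals (up to the boundary corrections) $-(h_{n+1-j}-h_{n-j})$, so it is governed by the type of $w_{n-j}$, not of $w_{n+1-j}$. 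The two indices are off by one, and your rule is inconsistent with (3). Concretely, for the history of Figure~\ref{fig:reslagu} ($w=\N\N\N\dE\E\S\dE\S\S$, $n=9$, $m=3$): condition (2) gives $v_2=\NE$ since $w_8=\S$, while condition (3) gives $g_2=h_8-1=1$ and $g_3=h_7-1=1$, so $v_2$ must be the level step $\E$; your rule outputs $v_2=\N$. The involution $\xi$ genuinely mixes the solid/dotted attribute with the up/down attribute (a $\dE$ step of $w$ can become an $\E$ step of $v$), which is why the paper determines only the family from (2), reads the exact type off $g_{j+1}-g_j$, and then verifies compatibility case by case --- its Table~\ref{xsi-cases} carries separate columns for $w_{n+1-j}$ and $w_{n-j}$ precisely to track this shift. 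Your ``key computational lemma,'' that the height word of your $v$ equals the piecewise formula in (3), is therefore false as stated and cannot be proved by the running-sum argument you sketch.

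Two further slips. First, your verification of $b_{m'}=0$ concludes ``$c_m=h_m$, equivalently $g_{m'}=0$,'' which is not what happens (in the example above $g_7=h_3=2\neq 0$ and $c_3=0\neq h_3$); the correct chain is that the ``otherwise'' clause of (3) gives $g_{m'}=h_m$, and $c_m=0$ since $m=\cs(W)$, whence $b_{m'}=g_{m'}-h_m+c_m=0$. Second, establishing $\cs(V)=n+1-m$ requires not only $b_{m'}=0$ but also $b_j\geq 1$ for every $j>m'$, including those with $v_j=\NE$ (where membership in $\LH_n$ alone only forces $b_j\geq 0$); this follows from $g_j=h_{n+1-j}$ together with $c_{n+1-j}\geq 1$ for $w_{n+1-j}=\SdE$, and is part of the case analysis that cannot be waved away as ``immediate.''
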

\begin{proof}
To determine the $j$-th step $(v_j,g_j,b_j)$ of $V=\xi(W)$ for $j=1,2,\ldots,n$, we examine the steps $(w_{n+1-j},h_{n+1-j},c_{n+1-j})$ and $(w_{n-j},h_{n-j},c_{n-j})$ (for $j<n$) of $W$. Depending on the value of $j$ compared with $n+1-m$ (i.e., $j=n+1-m$, $j<n+1-m$, or $j>n+1-m$), the four conditions (1)--(4) collectively decide $(v_j,g_j,b_j)$. Namely, condition (1) or (2), together with the type of $w_{n+1-j}$, decide the type of $v_j$ to be either $\NE$ or $\SdE$; condition (3), together with $h_{n+1-j}$ and $h_{n-j}$, decide the values of $g_j$ and $g_{j+1}$, hence the exact type of $v_j$ (consider $g_{j+1}-g_j$); condition (4) then yields the value of $b_j$. The triple $(v_j,g_j,b_j)$ derived this way is clearly unique. All it remains is to show that: (i) the four conditions are compatible with each other; (ii) the triple $(v,g,b)$ thus obtained is indeed an sr-Laguerre history of length $n$. We elaborate on these two claims below.
\begin{enumerate}[(i)]
	\item More precisely, we have to show that if condition (1) or (2) tells us $v_j=\NE$ (resp.~$v_j=\SdE$), then condition (3) should yield $g_{j+1}-g_j = 0$ or $1$ (resp.~$g_{j+1}-g_j = 0$ or $-1$). This is indeed the case, as can be verified with Table~\ref{xsi-cases}, wherein the values from the $4$th to the $9$th column are enforced by conditions (1)--(4). Note that in the last two cases with $j=n$, we have set $g_{j+1}=0$ to indicate that the path should end on the $x$-axis.
	\item We have to verify all three conditions in Definition~\ref{def:LH}.
	\begin{itemize}
		\item[(ii-1)] For condition (1), we use Table~\ref{xsi-cases} to check that $g_1=g_{n+1}=0$, and when $v_j=\SdE$, we always have $g_j\ge 1$. For the latter, we detail on one such case. Suppose $(j,v_j,v_{j+1})=(n-m,\SdE,\NE)$, then $g_j=h_{n+1-j}$ by the 4th row of Table~\ref{xsi-cases}. But now $n+1-j=m+1>m$, making $w_{n+1-j}=\NE$ a step after the critical step of $W$, which means $g_j=h_{n+1-j}\ge 1$, as desired.
		\item[(ii-2)] Condition (2) is implicitly satisfied when we make our choice for the exact type of $v_j$ according to the value of $g_{j+1}-g_j$. Namely, when $g_{j+1}-g_j=1$ (resp.~$0$, $-1$), we choose $v_j=\N$ (resp.~$\E$ or $\dE$, $\S$).
		\item[(ii-3)] Condition (3) can be verified case-by-case using the range of $b_j$ as shown in the last column of Table~\ref{xsi-cases}.
	\end{itemize}
\end{enumerate}

Finally, suppose $(w,h,c)\stackrel{\xi}{\rightarrow}(v,g,b)\stackrel{\xi}{\rightarrow}(\tilde{w},\tilde{h},\tilde{c})$. To see that $\xi$ is an involution, we need to show that $(\tilde{w}_j,\tilde{h}_j,\tilde{c}_j)=(w_j,h_j,c_j)$ for each $j=1,2,\ldots,n$. Noting that the height sequence $h$ completely determines the sequence $w$ of step types, and that $\tilde{h}_j-\tilde{c}_j=g_{n+1-j}-b_{n+1-j}=h_j-c_j$ according to condition (4), we see that it suffices to prove $\tilde{h}_j=h_j$. This needs to be checked for each of the five cases: 1) $j=m$; 2) $j<m$ and $w_j=\NE$; 3) $j<m$ and $w_j=\SdE$; 4) $j>m$ and $w_j=\NE$; 5) $j>m$ and $w_j=\SdE$. We trust the readers to fill in the details. So we see $(\tilde{w},\tilde{h},\tilde{c})=\xi^2(w,h,c)=(w,h,c)$, and $\xi$ is indeed an involution.
\end{proof}

\begin{table}
{\small
\begin{tabular}{ccccccccc}
\toprule
$j$ & $v_j$ & $v_{j+1}$ & $w_{n+1-j}$ & $w_{n-j}$ & $g_j$ & $g_{j+1}$ & $g_{j+1}-g_j$ & $b_j$\\
\midrule
$n+1-m$ & $\NE$ & $\NE$ & $\NE$ & $\SdE$ & $h_{n+1-j}$ & $h_{n-j}$ & 0 or 1 & 0 \\
&&&&&&&& \\
$n+1-m$ & $\NE$ & $\SdE$ & $\NE$ & $\NE$ & $h_{n+1-j}$ & $h_{n-j}+1$ & 0 or 1 & 0 \\
&&&&&&&& \\
$n-m$ & $\NE$ & $\NE$ & $\SdE$ & $\NE$ & $h_{n+1-j}-1$ & $h_{n-j}$ & 0 or 1 & $[0,g_j]$ \\
&&&&&&&& \\
$n-m$ & $\SdE$ & $\NE$ & $\NE$ & $\NE$ & $h_{n+1-j}$ & $h_{n-j}$ & 0 or $-1$ & $[1,g_j]$ \\
&&&&&&&& \\
$<n-m$ & $\NE$ & $\NE$ & $\SdE$ & $\SdE$ & $h_{n+1-j}-1$ & $h_{n-j}-1$ & 0 or 1 & $[0,g_j]$ \\
&&&&&&&& \\
$<n-m$ & $\NE$ & $\SdE$ & $\SdE$ & $\NE$ & $h_{n+1-j}-1$ & $h_{n-j}$ & 0 or 1 & $[0,g_j]$ \\
&&&&&&&& \\
$<n-m$ & $\SdE$ & $\NE$ & $\NE$ & $\SdE$ & $h_{n+1-j}$ & $h_{n-j}-1$ & 0 or $-1$ & $[1,g_j]$ \\
&&&&&&&& \\
$<n-m$ & $\SdE$ & $\SdE$ & $\NE$ & $\NE$ & $h_{n+1-j}$ & $h_{n-j}$ & 0 or $-1$ & $[1,g_j]$ \\
&&&&&&&& \\
$>n+1-m$ & $\NE$ & $\NE$ & $\SdE$ & $\SdE$ & $h_{n+1-j}$ & $h_{n-j}$ & 0 or 1 & $[1,g_j]$ \\
&&&&&&&& \\
$>n+1-m$ & $\NE$ & $\SdE$ & $\SdE$ & $\NE$ & $h_{n+1-j}$ & $h_{n-j}+1$ & 0 or 1 & $[1,g_j]$ \\
&&&&&&&& \\
$>n+1-m$ & $\SdE$ & $\NE$ & $\NE$ & $\SdE$ & $h_{n+1-j}+1$ & $h_{n-j}$ & 0 or $-1$ & $[1,g_j]$ \\
&&&&&&&& \\
$>n+1-m$ & $\SdE$ & $\SdE$ & $\NE$ & $\NE$ & $h_{n+1-j}+1$ & $h_{n-j}+1$ & 0 or $-1$ & $[1,g_j]$ \\
&&&&&&&& \\
$n \: (m=1)$ & $\E$ & n/a & $\NE$ & n/a & 0 & 0 & 0 & 0 \\
&&&&&&&& \\
$n \: (m>1)$ & $\S$ & n/a & $\NE$ & n/a & 1 & 0 & $-1$ & 1 \\
\bottomrule\\
\end{tabular}
}
\caption{All cases in the construction of $V=(v,g,b)=\xi(W)$}\label{xsi-cases}
\end{table}

Next, we introduce eight set-valued (or multiset-valued) statistics on sr-Laguerre histories, and discuss the implications of Theorem~\ref{thm:inv-xsi} in terms of these statistics.

\begin{Def}
For an sr-Laguerre history $W=(w,h,c)\in\LH_n$, we denote respectively the sets of $\NE$ steps before the critical step, $\SdE$ steps before the critical step, $\NdE$ steps before the critical step, $\NE$ steps after the critical step, $\SdE$ steps after the critical step, and $\NdE$ steps after the critical step as $\Neb(W)$, $\Sdeb(W)$, $\Ndeb(W)$, $\Nea(W)$, $\Sdea(W)$, and $\Ndea(W)$. Let
$$\Nde(W):=\{i\in[n-1]: w_i=\NdE\},
$$ and let $\Het(W)$ (resp.~$\Wt(W)$) be the multiset consisted of $h_i$ (resp.~$c_i$) copies of $i$ for $1 \leq i \leq n$.
\end{Def}

Throughout this paper, we use the convention that if ``St'' is a set-valued (or multiset-valued) statistic, then ``st'' is the corresponding numerical statistic. For example, $\het(W)$ is the cardinality of $\Het(W)$ (as a multiset) for each $W$. Moreover, since multisets are constantly involved in this paper, we prefer the disjoint union symbol ``$A\sqcup B$'' to the usual ``$A\cup B$''.

\begin{corollary}\label{coro:num only}
Given any $W\in\LH_n$, suppose $V=\xi(W)$. We have
\begin{align}
(\het-\wt,\neb,\sdeb,\nea,\sdea)\,W = (\het-\wt,\sdea,\nea,\sdeb,\neb)\,V.
\end{align}
\end{corollary}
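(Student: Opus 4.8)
The plan is to deduce Corollary~\ref{coro:num only} directly from the explicit description of $\xi$ given in Theorem~\ref{thm:inv-xsi}, so essentially no new combinatorial idea is required---only careful bookkeeping. First I would record the easy part: condition (4) of the theorem says $g_j - b_j = h_{n+1-j} - c_{n+1-j}$ for every $j \in [n]$, so summing over $j$ gives $\het(V) - \wt(V) = \sum_j (g_j - b_j) = \sum_j (h_{n+1-j} - c_{n+1-j}) = \het(W) - \wt(W)$. (Here I am using that $\Het$ and $\Wt$ are the multisets with $h_i$, resp.\ $c_i$, copies of $i$, so $\het(W) = \sum_i h_i$ and $\wt(W) = \sum_i c_i$, and reindexing $i \mapsto n+1-i$ is harmless.) This disposes of the first coordinate.

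For the remaining four coordinates I would argue as follows. By condition (1), $\cs(V) = n+1-m$ where $m = \cs(W)$; so the ``critical step'' of $V$ sits at position $n+1-m$, and a step of $V$ at position $j$ is \emph{before} (resp.\ \emph{after}) the critical step precisely when $j < n+1-m$ (resp.\ $j > n+1-m$), i.e.\ precisely when the mirrored index $n+1-j$ satisfies $n+1-j > m$ (resp.\ $n+1-j < m$), i.e.\ precisely when step $n+1-j$ of $W$ is \emph{after} (resp.\ \emph{before}) its critical step. Combined with condition (2), which says that for $j \neq n+1-m$ the step $v_j$ is an $\NE$ step iff $w_{n+1-j}$ is an $\SdE$ step, the map $j \mapsto n+1-j$ restricts to bijections
\[
\Sdea(W) \xrightarrow{\ \sim\ } \Neb(V), \qquad \Nea(W) \xrightarrow{\ \sim\ } \Sdeb(V),
\]
\[
\Sdeb(W) \xrightarrow{\ \sim\ } \Nea(V), \qquad \Neb(W) \xrightarrow{\ \sim\ } \Sdea(V).
\]
One small point to check here is the treatment of the critical step itself (position $m$ of $W$, position $n+1-m$ of $V$): both are weighted $0$, hence both are $\NE$ steps, and both are excluded from all the ``$\Neb$/$\Nea$/$\Sdeb$/$\Sdea$'' sets, so the critical step never contributes to any of the four counts and the bijections above are unaffected by it. Taking cardinalities yields $\neb(V) = \sdea(W)$, $\sdeb(V) = \nea(W)$, $\nea(V) = \sdeb(W)$, $\sdea(V) = \neb(W)$, which is exactly the claimed identity
\[
(\het-\wt,\neb,\sdeb,\nea,\sdea)\,W = (\het-\wt,\sdea,\nea,\sdeb,\neb)\,V.
\]

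I do not expect a genuine obstacle; the corollary is a bookkeeping consequence of the theorem. The one place that needs a moment of care is the correspondence between ``before/after the critical step'' under the index reversal $j \mapsto n+1-j$, together with the edge behaviour at the critical step; one should verify that a step counted in, say, $\Sdea(W)$ really does land in $\Neb(V)$ and not accidentally at the critical position $n+1-m$ of $V$---but this is automatic since the critical step of $V$ has weight $0$ and hence could only be the image of a weight-$0$ step of $W$, while conversely the critical step of $W$ (the unique \emph{last} weight-$0$ step) maps to position $n+1-m$. Alternatively, since $\xi$ is an involution (Theorem~\ref{thm:inv-xsi}), one could phrase the whole argument symmetrically in $W$ and $V$, which makes the book-keeping essentially self-checking. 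I would also remark that this corollary is precisely the ``numbers-only'' shadow of the finer set-valued statement one gets by keeping track of the bijections above, and it is what feeds into Corollary~\ref{coro:gf of A}.
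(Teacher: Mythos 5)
Your argument is correct and is essentially the paper's own proof: the paper likewise derives the first coordinate from condition (4) and the remaining four from condition (2) (with condition (1) implicitly fixing what ``before/after the critical step'' means for $V$), merely stating this in one sentence where you spell out the index-reversal bijections and the edge case at the critical step.
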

\begin{proof}
Condition (4) in the construction of $\xi$ ensures that $\het(W)-\wt(W)=\het(V)-\wt(V)$, while the remaining four equalities follow from condition (2).
\end{proof}

The following definition leads to an sr-Laguerre history counterpart of the permutation statistic $\Id$, the set of inverse descents (see Section~\ref{subsec:FV}).

\begin{Def}
For an sr-Laguerre history $W\in\LH_n$, let $\Asc(W)$ be the
set containing all the indices $i\in[n-1]$ satisfying one of the following conditions:
\begin{enumerate}
	\item $c_i<c_{i+1}$, with $w_i=\NE$,
	\item $c_i\le c_{i+1}$, with $w_{i}=\SdE$.
\end{enumerate}
We call such $i$ an {\it ascent} of $W$.
\end{Def}

Fix a positive integer $n$, we introduce the following {\em complementation with respect to $n$}, which is denoted as $\kappa_n$ and defined to be a map sending a multiset to another multiset with the same cardinality. More precisely, let $S=\{s_1,s_2,\ldots,s_m\}$ be certain multiset, then $$\kappa_n(S):=\{n-s_1,n-s_2,\ldots,n-s_m\}.$$

Now we can derive the following set/multiset-valued extension of Corollary~\ref{coro:num only}.

\begin{corollary}\label{coro:xi-stats}
Given any $W\in\LH_n$, suppose $V=\xi(W)$. We have
\begin{align}
&(\Neb,\Sdeb,\Nea,\Sdea,\Het,\Wt)\:W = \notag\\
& \quad \kappa_{n+1}\circ(\Sdea,\Nea,\Sdeb,\Neb,\Het\sqcup\Neb\setminus\Sdea,\Wt\sqcup\Neb\setminus\Sdea)\:V,
\label{coro:xieq1}
\end{align}
Moreover, we have
\begin{align}
\label{coro:xieq2}
[n-1] \setminus \,\Nde(W) &= \kappa_n\circ\Nde(V),\\
\label{coro:xieq3}
[n-1] \setminus \,\Asc(W) &= \kappa_n\circ\Asc(V).
\end{align}
\end{corollary}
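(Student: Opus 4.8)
The plan is to derive Corollary~\ref{coro:xi-stats} directly from the four defining properties of $\xi$ in Theorem~\ref{thm:inv-xsi}, together with the bookkeeping already done in Corollary~\ref{coro:num only}. Write $W=(w,h,c)$ with $\cs(W)=m$ and $V=\xi(W)=(v,g,b)$, so $\cs(V)=n+1-m$ by condition~(1). The guiding observation is the index-reversal $j\leftrightarrow n+1-j$ built into $\xi$: condition~(2) says that for every $j\neq n+1-m$, the $j$-th step of $V$ is an $\NE$ step exactly when the $(n+1-j)$-th step of $W$ is an $\SdE$ step, and the two critical steps sit at reflected positions. Consequently a step of $W$ lying \emph{before} its critical step (index $<m$) corresponds to a step of $V$ lying \emph{after} its critical step (index $>n+1-m$), and vice versa. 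This is precisely what is needed to swap the ``before'' and ``after'' labels while applying $\kappa_{n+1}$ to the indices.

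For \eqref{coro:xieq1} I would argue set by set. Take $i\in\Neb(W)$, i.e.\ $w_i=\NE$ and $i<m$; then $i\neq n+1-(n+1-i)$ forces $n+1-i\neq n+1-m$, condition~(2) gives $v_{n+1-i}=\SdE$, and $n+1-i>n+1-m$, so $n+1-i\in\Sdea(V)$; reversing the argument (using that $\xi$ is an involution, or re-running condition~(2) the other way) shows $\Neb(W)=\kappa_{n+1}(\Sdea(V))$. The same reasoning with the roles of $\NE/\SdE$ and before/after permuted yields the matching identities for $\Sdeb$, $\Nea$, $\Sdea$. The one genuinely delicate point is that the index $i=m$ (the critical step of $W$, weighted $0$, hence necessarily an $\NE$ step) and the index $i=n+1-m$ of $V$ are \emph{excluded} from all six ``before/after'' sets by definition, so condition~(2)'s single exceptional index causes no trouble there; I would state this explicitly. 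For the multiset parts $\Het,\Wt$: by the definition of $\Het$, the multiplicity of $i$ in $\Het(W)$ is $h_i$, so $\kappa_{n+1}(\Het(V))$ has the element $n+1-j$ with multiplicity $g_j$; comparing with $h_i$ via condition~(3) — which changes $g_j$ from $h_{n+1-j}$ by $+1$ exactly when $j>n+1-m$ and $v_j=\SdE$ (i.e.\ $n+1-j\in\Neb(W)$ after translating via condition~(2)), and by $-1$ exactly when $j<n+1-m$ and $v_j=\NE$ (i.e.\ $n+1-j\in\Sdea(W)$) — one reads off that $\Het(W)=\kappa_{n+1}(\Het(V)\sqcup\Neb(V)\setminus\Sdea(V))$ once the shifts are re-indexed; and the $\Wt$ identity then follows from condition~(4), which says $g_j-b_j=h_{n+1-j}-c_{n+1-j}$, i.e.\ the multiset $\Het(W)\setminus\Wt(W)$ equals $\kappa_{n+1}$ applied to $\Het(V)\setminus\Wt(V)$ as already noted in Corollary~\ref{coro:num only}, so the correction term $\Neb\setminus\Sdea$ is forced to be identical for $\Het$ and $\Wt$.

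For \eqref{coro:xieq2}, recall $\Nde(W)=\{i\in[n-1]:w_i=\NdE\}$, i.e.\ $w_i\in\{\N,\dE\}$, which is the complement within $[n-1]$ of $\{i:w_i\in\{\S,\E\}\}=\{i:w_i=\SE\}$. Condition~(2) (now needing to be checked also at, and around, the two critical indices, since $\Nde$ ranges over all of $[n-1]$) says $v_j=\NE\iff w_{n+1-j}=\SdE$; taking the ``not'' of both sides, $v_j=\SdE\iff w_{n+1-j}=\NE$, hence $w_{n+1-j}=\NdE\iff v_j=\SE$. Translating: $i\in\Nde(W)$ with $i\le n-1$ corresponds to $n+1-i=j$, but one must be careful that the index ranges are $[n-1]$ on both sides rather than $[n]$, and that the critical-step index(es) may or may not lie in $\Nde$. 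I would handle this by noting $w_n$ is irrelevant to $\Nde(W)$ and $v_1=\N$ always (first step has height $0$, is weighted $0$; actually it is an $\NE$ step of height $0$), so the shift between $\{n+1-i:i\in[n-1]\}=\{2,\ldots,n\}$ and $[n-1]$ is absorbed by checking the two endpoints $1$ and $n$ by hand; the interior identity is then the clean statement $[n-1]\setminus\Nde(W)=\kappa_n(\Nde(V))$. Finally \eqref{coro:xieq3} is the analogue for $\Asc$: an index $i$ is an ascent of $W$ when ($w_i=\NE$ and $c_i<c_{i+1}$) or ($w_i=\SdE$ and $c_i\le c_{i+1}$); the complementary (non-ascent) condition is ($w_i=\NE$ and $c_i\ge c_{i+1}$) or ($w_i=\SdE$ and $c_i>c_{i+1}$). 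Using condition~(4) to rewrite $c$ in terms of $h-g+b$, and the local height relations encoded in Table~\ref{xsi-cases} that relate consecutive $g_j,g_{j+1}$ (equivalently the type of $v_j$), one checks that $i$ is a non-ascent of $W$ at reversed indices iff $n-i$ is an ascent of $V$; this is the heart of the matter, so I would verify it by going through the same five (or slightly more, accounting for the critical step) position cases — $j=m$, $j\lessgtr m$ with $w_j=\NE$ or $\SdE$ — that the proof of Theorem~\ref{thm:inv-xsi} already organizes, reusing each row of Table~\ref{xsi-cases}. I expect this last case analysis for $\Asc$ to be the main obstacle: it intertwines the type of a step, the weights $c_i,c_{i+1}$, \emph{and} the height changes, so the bookkeeping is heavier than for $\Nde$, but it is entirely mechanical given the table.
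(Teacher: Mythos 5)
Your treatment of \eqref{coro:xieq1} and \eqref{coro:xieq3} follows the paper's own route --- conditions (2)--(4) of Theorem~\ref{thm:inv-xsi} for the six (multi)sets, and a row-by-row check of Table~\ref{xsi-cases} combined with $c_{n+1-j}=h_{n+1-j}-g_j+b_j$ for the ascent statistic --- and is sound. The gap is in \eqref{coro:xieq2}. You write: from $v_j=\NE\iff w_{n+1-j}=\SdE$, ``taking the not of both sides'' gives $v_j=\SdE\iff w_{n+1-j}=\NE$, ``hence $w_{n+1-j}=\NdE\iff v_j=\SE$.'' That last inference is invalid: $\{\NE,\SdE\}$ and $\{\NdE,\SE\}$ are two \emph{different} partitions of the four step types ($\NE=\{\N,\E\}$ versus $\NdE=\{\N,\dE\}$), so knowing on which side of the first partition a step falls says nothing about the second. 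For instance, condition (2) permits $v_j=\dE$ (hence $v_j=\SdE$ and $v_j=\NdE$) together with $w_{n+1-j}=\N$ (hence $\NE$ and $\NdE$), which contradicts your claimed equivalence. The correct statement --- the one the paper proves --- is $v_j=\NdE\iff w_{n-j}=\SE$ for $1\le j<n$, with index $n-j$ rather than $n+1-j$; this systematic off-by-one is not a boundary effect to be ``absorbed by checking the two endpoints'' but reflects the fact that the $\NdE$/$\SE$ dichotomy is determined only by combining the $\NE$/$\SdE$ type with the height increment, so under the index reversal the relevant comparison pairs step $j$ of $V$ with step $n-j$ of $W$. Establishing it therefore requires the height data of condition (3), i.e.\ a case-by-case pass through Table~\ref{xsi-cases} (e.g.\ in row 5, $v_j=\N$ forces $g_{j+1}-g_j=1$, hence $h_{n+1-j}-h_{n-j}=-1$ and $w_{n-j}=\S$), exactly parallel to the check you already planned for $\Asc$. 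With that repair the rest of your argument goes through.
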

\begin{proof}
Condition (3) in the construction of $\xi$ ensures that
$$\Het(W)=\kappa_{n+1}(\Het\sqcup\Neb\setminus\Sdea(V)).$$
Condition (4) ensures that
$$\Het\setminus\Wt(W)=\kappa_{n+1}(\Het\setminus\Wt(V)) \text{ and } \Wt(W)=\kappa_{n+1}(\Wt\sqcup\Neb\setminus\Sdea(V)).$$
The remaining four equalities in (\ref{coro:xieq1}) follow from condition (2). Moreover, a case-by-case check using Table \ref{xsi-cases} reveals that $v_j=\NdE$ if and only if $w_{n-j}=\SE$ for $1 \leq j <n$, which proves \eqref{coro:xieq2}. Take the case in row $5$ as an example. If $v_j=\N$, implying that $g_{j+1}-g_j=1$, then $w_{n-j}=\S$, since $h_{n+1-j}-h_{n-j}=-(g_{j+1}-g_j)=-1$.

Equality (\ref{coro:xieq3}) follows from the fact that
$j \in \Asc(V)$ if and only if $n-j \notin \Asc(W)$ for $1 \leq j <n$.
This can be  checked through Table \ref{xsi-cases}.
As an example, we show it holds for the case in row $6$. Notice that $c_{n-j}=h_{n-j}-g_{j+1}+b_{j+1}=b_{j+1}$
and $c_{n+1-j}=h_{n+1-j}-g_{j}+b_{j}=b_{j}+1$. Then $b_j < b_{j+1}$
if and only if  $c_{n-j} \geq c_{n+1-j}$, as desired. Other cases can be verified similarly.
\end{proof}

Interpreting Corollary~\ref{coro:xi-stats} using the multivariate generating function $A(z)$ defined in \eqref{gf:A}, we can now prove Corollary~\ref{coro:gf of A}.
\begin{proof}[Proof of Corollary~\ref{coro:gf of A}]
Take any $W\in\LH_n$ and suppose $V=\xi(W)$. By Corollary~\ref{coro:xi-stats}, we have
\begin{align*}
& (\neb,\sdeb,\nea,\sdea,\het,\wt)\:W =\\
& \qquad (\sdea,\nea,\sdeb,\neb,\het+\neb-\sdea,\wt+\neb-\sdea)\:V,\\
& \nde(W)=n-1-\nde(V),\quad \asc(W)=n-1-\asc(V),\\
& \cs(W)=n+1-\cs(V).
\end{align*}
Plugging these relations back into the generating function of $A(z)$ and noting that as $W$ runs through all histories in $\LH_n$, so does $V$, we get \eqref{eq:A var change} and finish the proof.
\end{proof}

\section{Application: equidistributions of permutation statistics}\label{sec:app}

In this section, with the involution $\xi$ in mind, we first recall an existing bijection between permutations and (restricted) Laguerre histories, and then modify it to get a bijection between permutations and sr-Laguerre histories. Suppose $f: \SS_n\rightarrow \LH_n$ is such a bijection, then our strategy is to consider the compositional map $f^{-1}\circ\xi\circ f$, and see what equidistribution results concerning various permutation statistics can be deduced from this map.

In the following three subsections, we apply the above strategy to three bijections, namely, two classic bijections due to Fran\c con-Viennot \cite{FV},  Foata-Zeilberger \cite{FZ90}, respectively, and a most recent one due to Yan-Zhou-Lin \cite{YZL}. All the permutation statistics involved will be defined immediately before they are needed. In the final subsection \ref{subsec:Mahonian}, we start with four Mahonian statistics considered by Clarke-Steingr\'{i}msson-Zeng \cite{CSZ}, and look for their counterparts through the aforementioned compositional maps. The results are summarized in Table~\ref{Mahonian stats} and Theorem~\ref{thm:Mahon stats}, and further explored in Appendix~\ref{sec:appendix}.

\subsection{Application with \texorpdfstring{$\Phi_{\FV}$}{Fran\c con-Viennot's bijection}}\label{subsec:FV}

Fran\c con and Viennot's original bijection \cite[Theorem 2.2]{FV} was essentially from $\mathtt{L}_{n-1}$ to $\SS_n$, where permutations were bordered with $(-\infty,-\infty)$ and the interpretation of the weights $c_i$ is counting from left to right. The version we are going to introduce here is a variant $\Phi_{\FV}:\SS_n\rightarrow \LH_n$ with the boundary condition $(-\infty,\infty)$ and the counting for $c_i$ is from right to left.

We begin by recalling a few set-valued {\em linear}\footnote{Here ``linear'' means these statistics are defined using the one-line notation of permutations, as opposed to the ``cyclic'' ones, whose definitions are usually better understood using the cycle notation of permutations; see e.g., \cite{CSZ}.} permutation statistics. Given a permutation $\pi=\pi(1) \pi(2) \cdots \pi(n) \in \SS_n$, let
\begin{align*}
   \Des(\pi) &=\{i : \pi(i) > \pi(i+1), 1\le i<n\}, \quad \Dt(\pi)  =\{\pi(i) : i\in\Des(\pi)\},  \\[3pt]
  \Db(\pi) &=\{\pi(i+1) : i\in\Des(\pi)\},  \quad  \Ab(\pi) =\{\pi(i) : \pi(i)< \pi(i+1), 1\le i<n\},\; \text{and} \\[3pt]
  \Id(\pi)&=\Des(\pi^{-1})
\end{align*}
be the sets of {\em descents, descent tops, descent bottoms, ascent bottoms,} and {\em inverse descents} of $\pi$, respectively. Here $\pi^{-1}$ refers to the group-theoretical inverse of $\pi$. By our convention, $\des(\pi)$ (resp.~$\ides(\pi)$) is then understood to be the cardinality of $\Des(\pi)$ (resp.~$\Id(\pi)$) and called the {\em descent number} (resp.~{\em inverse descent number}) of $\pi$. Moreover, we need the multiset-valued statistics {\em descent difference} and {\em descent bottoms sum}\footnote{Note that in \cite[Definition 3]{CSZ}, $\Ddif$ and $\Dbot$ are merely numerical statistics, for which we use lowercase letters $\ddif$ and $\dbot$ to avoid confusion (see section \ref{subsec:Mahonian}). Moreover, $\Db(\pi)$ completely determines $\Dbot(\pi)$ and vice versa.}, defined as
\begin{align*}
\Ddif(\pi) &=\bigcup_{i\in\Des(\pi)}\{\pi(i+1)+1,\pi(i+1)+2,\ldots,\pi(i)\},\\
\Dbot(\pi) &=\bigcup_{i\in\Des(\pi)}\{\pi(i+1)^{\pi(i+1)}\},
\end{align*}
where $a^b$ means $b$ copies of $a$, and further refined statistics
\begin{align*}
  \Dtb(\pi)=\{\pi(i) : \pi(i) <\pi(n), \pi(i) \in \Dt(\pi)\}, &\,\,\,\, \Dta(\pi)=\{\pi(i) : \pi(i) >\pi(n), \pi(i) \in \Dt(\pi)\}, \\[3pt]
  \Dbb(\pi)=\{\pi(i) : \pi(i) <\pi(n), \pi(i) \in \Db(\pi)\}, &\,\,\,\, \Dba(\pi)=\{\pi(i) : \pi(i) >\pi(n), \pi(i) \in \Db(\pi)\}, \\[3pt]
  \Abb(\pi)=\{\pi(i) : \pi(i) <\pi(n), \pi(i) \in \Ab(\pi)\}, & \,\,\,\,\Aba(\pi)=\{\pi(i) : \pi(i) >\pi(n), \pi(i) \in \Ab(\pi)\}.
\end{align*}


Next we consider several statistics involving the notion of permutation patterns, which we briefly recall here for the sake of completeness. An occurrence of a classical pattern $p$ in a permutation $\sigma$ is a subsequence of $\sigma$ that is order-isomorphic to $p$. In 2000, Babson and Steingr\'{i}msson \cite{BS} generalized the notion of permutation patterns, to what are now known as vincular patterns; see the book exposition by Kitaev \cite{Kit}, as well as our recent work \cite{ChenFu} related to vincular patterns. Adjacent letters in a vincular pattern which are underlined must stay adjacent when they are placed back to the original permutation. For instance, $41253$ contains only one occurrence of the vincular pattern $\underline{31}42$ in its subsequence $4153$. Given a vincular pattern $\tau$ and a permutation $\pi$, we denote by $\tau(\pi)$ the number of occurrences of the pattern $\tau$ in $\pi$, and $(\tau_1+\tau_2)(\pi):=\tau_1(\pi)+\tau_2(\pi)$. We introduce the coordinate statistics
\begin{eqnarray*}
& \pprs_i (\pi) =  \# \{ ~j : i <j <n  ~\text{and}~  \pi(j) < \pi(i) < \pi(j+1)  \},  \\[3pt]
&\ppwrs_i (\pi) =  \# \{ ~j : i <j <n  ~\text{and}~  \pi(j+1) < \pi(i) < \pi(j)  \},  \\[3pt]
&\pp31-2_i (\pi) =  \# \{ ~j : 1  \leq j <i-1  ~\text{and}~  \pi(j+1) < \pi(i) < \pi(j)  \}
\end{eqnarray*}
for $1\le i\le n$, and the corresponding multiset-valued statistics
\begin{eqnarray*}
  &\rightascent(\pi) = \{\pi(i) : (\pi(i), \pi(j)) \text{ is a pair such that } i<j<n \text{ and } \pi(j) < \pi(i) < \pi(j+1) \}, \\[4pt]
   & \rightdescent(\pi) = \{\pi(i) : (\pi(i), \pi(j)) \text{ is a pair such that } i<j<n \text{ and } \pi(j+1) < \pi(i) < \pi(j) \}, \\[4pt]
  & \leftdescent(\pi) = \{\pi(i) : (\pi(j), \pi(i)) \text{ is a pair such that } 1 \leq j <i-1 \text{ and } \pi(j+1) < \pi(i) < \pi(j) \}.
\end{eqnarray*}

Clearly, $\pprs_i\, \pi$ is the number of occurrences of $\pi(i)$ in $\rightascent(\pi)$, thus, $\rightascent(\pi) =\rightascent(\sigma)$ if and only if we have
$\pprs_i(\pi)=\pprs_i(\sigma)$ for $1 \leq i \leq n$. The following example could be used to confirm all the linear statistics we have introduced so far.


\begin{example}
For $\sigma=618742593$, we have
$\Des(\sigma)=\{1,3,4,5,8\}$,
$\Id(\sigma)=\{3,5,7\}$,
$\Dt(\sigma)=\Dta(\sigma)=\{4,6,7,8,9\}$,
$\Dtb(\sigma)=\emptyset$,
$\Db(\sigma)=\{1,2,3,4,7\}$,
$\Dbb(\sigma)=\{1,2\}$,
$\Dba(\sigma)=\{4,7\}$,
$\Ab(\sigma)=\{1,2,5\}$,
$\Abb(\sigma)=\{1,2\}$,
$\Aba(\sigma)=\{5\}$,
$\Ddif(\sigma)=\{2,3^2,4^3,5^3,\linebreak$
$6^3,7^2,8^2,9\}$,
$\Dbot(\sigma)=\{1,2^2,3^3,4^4,7^7\}$,
$\rightascent(\sigma)=\{4,6^2,7,8\}$,
$\rightdescent(\sigma)=\{4,5,6^2,7,8\}$,
$\leftdescent(\sigma)=\{2,3^2,4,5^2\}$.
\end{example}

Now we can present our variant of Fran\c con-Viennot's bijection, $\Phi_{\FV}: \SS_n \to \LH_n$.
Given $\pi=\pi(1)\pi(2) \cdots \pi(n) \in \SS_n$, let $\pi(0)=-\infty$ and
$\pi(n+1)=\infty$. For $1 \leq i \leq n$ we call $\pi(i)$ a
\begin{itemize}
  \item {\em linear peak} if $\pi(i-1) < \pi(i) >\pi(i+1)$;
  \item {\em linear valley} if $\pi(i-1) > \pi(i) < \pi(i+1)$;
  \item {\em linear double ascent} if $\pi(i-1) < \pi(i) <\pi(i+1)$;
  \item {\em linear double descent} if $\pi(i-1) > \pi(i) >\pi(i+1)$.
\end{itemize}
The set of linear peaks (resp.~valleys, double ascents, double descents) is denoted as $\Lpk$ (resp.~$\Lval, \Lda, \Ldd$). Its cardinality is denoted, by our convention, as $\lpk$ (resp.~$\lval, \lda, \ldd$). For $\pi \in \SS_n$, let $\Phi_{\FV}(\pi)=(w,h,c)$, where
\begin{equation*}
w_i= \left\{
  \begin{array}{ll}
  \S, & \mbox{  $ i \in \Lpk(\pi)$,} \\[3pt]
  \N, & \mbox{ $ i \in \Lval(\pi)$,}\\[3pt]
  \E, & \mbox{  $ i \in \Lda(\pi)$,}\\[3pt]
  \dE, & \mbox{  $ i \in \Ldd(\pi)$.}
 \end{array} \right.
\end{equation*}
and $c_{i}=\ppwrs_{\pi^{-1}(i)}(\pi)+\chi(w_{i}=\SdE)$. Here $\chi(S)=1$ if the statement $S$ is true and otherwise it equals $0$.

Conversely, for $W=(w,h,c)\in\LH_n$, we sketch the procedure to recover its preimage $\pi$. Beginning with $\pi^{(0)}=\emptyset$, we proceed to insert $i$ into $\pi^{(i-1)}$ to form $\pi^{(i)}$, for $i=1,2,\ldots,n$, and we set $\pi=\pi^{(n)}$ in the end. Now if $w_i=\S$ (resp.~$\N$, $\E$, $\dE$), then we insert $i$ (resp.~$\sqcup i \sqcup$, $i\sqcup$, $\sqcup i$) into the $c_i$-th empty slot in $\pi^{(i-1)}$ (here the counting is from right to left and starts with $0$), where $\sqcup$ represents an empty slot that will be filled by larger numbers later (with a possible exception with the rightmost $\sqcup$, which may stay being empty and thus corresponds to $\pi(n+1)=\infty$). We leave it to the reader to verify that indeed $\Phi_{\FV}(\pi)=W$, and therefore $\Phi_{\FV}$ is a bijection. As an example, if $\pi=618742593$, then $\Phi_{\FV}(\pi)$ is exactly the sr-Laguerre history given in Figure~\ref{fig:reslagu}.

Now we are ready for the main results of this subsection.

\begin{proposition}\label{prop:FV}
The bijection $\Phi_{\FV}:\SS_n\rightarrow \LH_n$ as given above links nine permutation statistics with their counterparts over sr-Laguerre histories as follows. For any $\pi\in\SS_n$, let $W=\Phi_{\FV}(\pi)$, then we have $\pi(n)=\cs(W)$, and
\begin{align}
\label{eq:FVSET}
& (\Dtb,\Dta,\Dbb,\Dba,\Abb,\Aba,\Id,\Ddif,\Dt\sqcup\rightdescent )\,\pi \\[3pt]
& \quad = (\Sdeb,\Sdea,\Ndeb,\Ndea,\Neb,\Nea, \Asc,\Het,\Wt)\,W,  \nonumber \\
\label{eq:FVSETcoro}
& (\rightascent, \rightdescent, \leftdescent)\,\pi =( \Wt\setminus\Nea\setminus\Sdea,
\Wt\setminus\Sdeb\setminus\Sdea,\Het\setminus\Wt)\,W.
\end{align}
\end{proposition}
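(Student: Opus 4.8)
The plan is to establish the correspondence in~\eqref{eq:FVSET} statistic by statistic, working directly from the definition of $\Phi_{\FV}$, and then to deduce~\eqref{eq:FVSETcoro} as a formal consequence. First I would pin down the auxiliary claim $\pi(n)=\cs(W)$: since $c_i=\ppwrs_{\pi^{-1}(i)}(\pi)+\chi(w_i=\SdE)$, an index $i$ has $c_i=0$ exactly when $w_i=\NE$ and $\ppwrs_{\pi^{-1}(i)}(\pi)=0$; the value $\pi(n)$ always satisfies this (there is no $j$ with $n<j<n$), and I would argue no larger index can, because if $i>\pi(n)$ sits at position $\pi^{-1}(i)<n$ then the pair $(\pi^{-1}(i),n)$ or the bordering $\pi(n+1)=\infty$ forces a $2\underline{31}$ occurrence or a descent making $c_i\ge 1$. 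This identifies the critical step of $W$ with the last letter of $\pi$, which is what couples the ``before/after'' refinements on both sides.

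Next I would handle the path-shape statistics. By construction $w_i\in\{\S,\N,\E,\dE\}$ according to whether $\pi(i)$ is a linear peak, valley, double ascent, or double descent; hence $i$ with $w_i=\SdE$ (i.e.\ $w_i\in\{\S,\dE\}$) are precisely the descent tops $\pi(i)\in\Dt(\pi)$ (peaks and double descents both have $\pi(i-1)<\pi(i)$ is false — rather $\pi(i)>\pi(i+1)$... here I must be careful: $w_i=\SdE$ corresponds to $\pi(i)>\pi(i+1)$, so $i\in\Des(\pi)$), and $w_i=\NE$ corresponds to $\pi(i)<\pi(i+1)$, i.e.\ ascent bottoms together with the valleys/double ascents. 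Wait — the statistics in~\eqref{eq:FVSET} are stated in terms of the \emph{value} $\pi(i)$, not the position $i$, so I would carefully note that $\Sdeb(W)$ collects the positions $i<\pi(n)$... no: $\Sdeb(W)$ is a set of step-indices, and the proposition equates it with $\Dtb(\pi)$, a set of values. The resolution is that $\Phi_{\FV}$ is built so that step $i$ records the value $i$ (the insertion is of the number $i$), so a step-index $i$ of $W$ corresponds to the \emph{value} $i$ of $\pi$, sitting at position $\pi^{-1}(i)$. Thus $w_i=\SdE\iff \pi^{-1}(i)\in\Des(\pi)\iff i\in\Dt(\pi)$, and the split ``before/after the critical step'' $i\lessgtr\pi(n)$ matches exactly the split $\pi(i')\lessgtr\pi(n)$ defining $\Dtb$ vs $\Dta$, $\Dbb$ vs $\Dba$, $\Abb$ vs $\Aba$. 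This disposes of the first six coordinates.

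For the remaining three I would argue: $\Asc(W)=\Id(\pi)$ by unwinding the defining inequalities on $c_i$ versus $c_{i+1}$ in the definition of $\Asc$ and translating them, via $c_i=\ppwrs_{\pi^{-1}(i)}(\pi)+\chi(\cdots)$, into $\pi^{-1}(i)<\pi^{-1}(i+1)$ versus $\pi^{-1}(i)>\pi^{-1}(i+1)$ — this is the standard ``$c_i$ counts the slot from the right'' bookkeeping and is where I expect the main obstacle to lie, namely verifying the $\chi(w_i=\SdE)$ correction terms interact correctly in the two cases of the $\Asc$ definition. For $\Het(W)=\Ddif(\pi)$ I would show $h_i=\#\{i'\in\Des(\pi):\pi(i'+1)<i\le\pi(i')\}$ directly from $h_i=\#\{j<i:w_j=\N\}-\#\{j<i:w_j=\S\}=\lval_{<i}-\lpk_{<i}$ (valleys minus peaks among values below $i$), which telescopes to the number of descent-intervals containing $i$; multiplicity of $i$ in $\Ddif(\pi)$ is exactly that count. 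For $\Wt(W)=\Dt(\pi)\sqcup\rightdescent(\pi)$ I would combine $c_i=\ppwrs_{\pi^{-1}(i)}(\pi)+\chi(i\in\Dt(\pi))$: the multiplicity of $i$ in $\Wt(W)$ is $c_i$, the $\ppwrs$ part contributes the multiplicity of $i$ in $\rightdescent(\pi)$ (by the already-noted fact that $\ppwrs_i$ counts occurrences of $\pi(i)$ in $\mathrm{\mathbf{2}\underline{31}}(\pi)$), and the $\chi$ part contributes $1$ precisely when $i\in\Dt(\pi)$. Finally,~\eqref{eq:FVSETcoro} is pure bookkeeping on top of~\eqref{eq:FVSET}: since $\Wt\,W=\Dt\sqcup\rightdescent$ and $\Dt=\Dtb\sqcup\Dta=\Sdeb\sqcup\Sdea$, $\Ndeb\sqcup\Ndea=$ ascent bottoms among descent-tops... one checks $\Wt\setminus\Nea\setminus\Sdea$ removes from $\Dt\sqcup\rightdescent$ exactly the part of $\rightdescent$ supported after position $\pi(n)$ together with $\Dta$, leaving $\rightascent(\pi)$ — here I would invoke a small lemma that $\rightdescent(\pi)\setminus(\text{values after }\pi(n)) \sqcup \Dtb$... rather, I would derive the three identities from the relations $\Dt\sqcup\rightdescent=\Wt$, $\rightascent$ vs $\rightdescent$ differ by the descent/ascent status at the relevant position, and $\leftdescent=\Ddif\setminus(\Dt\sqcup\rightdescent)$ — a known Babson–Steingrímsson-type identity — which gives $\leftdescent(\pi)=\Het\setminus\Wt(W)$ immediately. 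I expect the $\Het\setminus\Wt=\leftdescent$ identity and the $\Asc=\Id$ identity to be the two steps needing genuine care; the rest is substitution.
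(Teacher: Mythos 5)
Your proposal is correct and follows essentially the same route as the paper: the identities in \eqref{eq:FVSET} are read off directly from the construction of $\Phi_{\FV}$ (using the key observation that step $i$ of $W$ encodes the value $i$ of $\pi$ and that $\cs(W)=\pi(n)$), and \eqref{eq:FVSETcoro} is then deduced from the same two multiset identities the paper invokes, namely $\rightascent=\rightdescent\setminus\Aba\sqcup\Dtb$ and $\leftdescent=\Ddif\setminus\Dt\setminus\rightdescent$. You in fact supply more detail than the paper (which dismisses \eqref{eq:FVSET} as clear from the definitions), and your sketches of $\pi(n)=\cs(W)$, $\Het=\Ddif$, and $\Wt=\Dt\sqcup\rightdescent$ are sound.
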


\begin{proof}
Equality \eqref{eq:FVSET} should be clear from the definitions of the statistics and the construction of $\Phi_{\FV}$, then equality \eqref{eq:FVSETcoro} follows from \eqref{eq:FVSET} and the facts that for any $\pi \in \SS_n$,
\begin{align*}
  \rightascent(\pi) & = \rightdescent(\pi) \setminus \Aba(\pi) \sqcup \Dtb(\pi),\\[3pt]
  \leftdescent(\pi) & = \Ddif(\pi) \setminus \Dt(\pi) \setminus \rightdescent(\pi).
\end{align*}
\end{proof}

Combining Proposition~\ref{prop:FV} with Corollary~\ref{coro:xi-stats}, we get immediately the following corollary.

\begin{corollary}\label{coro:desbased}
$\phi=\Phi_{\FV}^{-1}\circ \xi \circ \Phi_{\FV}:\SS_n\to\SS_n$ is a bijection such that for any $\pi \in \SS_n$,
\begin{align*}
  (\Dtb,\Dta,\Abb,\Aba,\rightascent,\rightdescent,\leftdescent) \, \pi
&=\kappa_{n+1}\circ(\Aba,\Abb,\Dta,\Dtb,\rightdescent,\rightascent,\leftdescent) \, \phi(\pi),\\[4pt]
[n-1] \setminus\,  \Db(\pi) &=  \kappa_n\circ\Db(\phi(\pi)),\\[4pt]
  [n-1] \setminus\,  \Id(\pi) &=  \kappa_n\circ\Id(\phi(\pi)).
\end{align*}
\end{corollary}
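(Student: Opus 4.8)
The plan is to derive Corollary~\ref{coro:desbased} by a direct substitution of Proposition~\ref{prop:FV} into Corollary~\ref{coro:xi-stats}, bookkeeping carefully the dictionary between permutation statistics and sr-Laguerre history statistics. Since $\phi=\Phi_{\FV}^{-1}\circ\xi\circ\Phi_{\FV}$ is a composition of bijections, it is automatically a bijection $\SS_n\to\SS_n$, so the only content is the three displayed identities. First I would fix $\pi\in\SS_n$, set $W=\Phi_{\FV}(\pi)$ and $V=\xi(W)$, so that $\phi(\pi)=\Phi_{\FV}^{-1}(V)$. By Proposition~\ref{prop:FV} applied to $\pi$, each permutation statistic on the left can be rewritten as the corresponding history statistic on $W$; by Proposition~\ref{prop:FV} applied to $\phi(\pi)$, each history statistic on $V$ can be rewritten as the corresponding permutation statistic on $\phi(\pi)$. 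Corollary~\ref{coro:xi-stats} supplies the bridge between the $W$-statistics and the $V$-statistics.

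For the last two identities this is immediate: \eqref{coro:xieq2} gives $[n-1]\setminus\Nde(W)=\kappa_n\circ\Nde(V)$, and under $\Phi_{\FV}$ one has $\Nde(\cdot)\leftrightarrow\Db(\cdot)$ — indeed $\Nde(W)=\{i\in[n-1]:w_i=\NdE\}$ corresponds under the rule $w_i=\dE\iff i\in\Ldd(\pi)$, $w_i=\S\iff i\in\Lpk(\pi)$ to the set of descent bottoms, this being a standard feature of the Fran\c con--Viennot correspondence (a position $i$ is a descent bottom of $\pi$ iff $i=\pi(\pi^{-1}(i))$ is a linear peak or double descent value, i.e. $w_i\in\{\S,\dE\}$... more precisely one checks $\Db(\pi)=\{i:w_i=\SdE\}$, and intersecting with $[n-1]$ removes only the possibility $i=n$, which is handled by $\cs$). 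Similarly \eqref{coro:xieq3} together with $\Asc(\cdot)\leftrightarrow\Id(\cdot)$ from \eqref{eq:FVSET} yields $[n-1]\setminus\Id(\pi)=\kappa_n\circ\Id(\phi(\pi))$. The slight subtlety here is checking the $\Db$ versus $\Nde$ dictionary explicitly, since Proposition~\ref{prop:FV} lists $\Dbb,\Dba$ but not $\Db$ itself; one recovers $\Db=\Dbb\sqcup\Dba$ on the permutation side and $\{i:w_i=\SdE, i\in[n-1]\}=\Ndeb\sqcup\Ndea$ restricted appropriately on the history side — actually the cleaner route is to note $\Db(\pi)\cap[n-1]$ corresponds exactly to $\Nde(W)$ directly from the step-type assignment, without passing through the refined statistics.

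For the first, seven-component identity I would read off from \eqref{eq:FVSET}--\eqref{eq:FVSETcoro} the substitutions
\[
(\Dtb,\Dta,\Abb,\Aba)\pi=(\Sdeb,\Sdea,\Neb,\Nea)W,\qquad
(\rightascent,\rightdescent,\leftdescent)\pi=(\Wt\setminus\Nea\setminus\Sdea,\ \Wt\setminus\Sdeb\setminus\Sdea,\ \Het\setminus\Wt)W,
\]
and the analogous relations with $W$ replaced by $V$ and $\pi$ replaced by $\phi(\pi)$. Then I plug in the $\xi$-identity \eqref{coro:xieq1}, namely $(\Neb,\Sdeb,\Nea,\Sdea)W=\kappa_{n+1}(\Sdea,\Nea,\Sdeb,\Neb)V$ together with $\Het(W)=\kappa_{n+1}(\Het\sqcup\Neb\setminus\Sdea)(V)$ and $\Wt(W)=\kappa_{n+1}(\Wt\sqcup\Neb\setminus\Sdea)(V)$. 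The four ``set'' components then match instantly: $\Dtb(\pi)=\Sdeb(W)=\kappa_{n+1}\Sdea(V)=\kappa_{n+1}\Aba(\phi(\pi))$, and likewise for the other three. The real work is the three ``pattern'' components, where I must verify, for instance, that
\[
\rightascent(\pi)=(\Wt\setminus\Nea\setminus\Sdea)(W)=\kappa_{n+1}\bigl((\Wt\sqcup\Neb\setminus\Sdea)\setminus\Sdeb\setminus\Neb\bigr)(V)
\]
simplifies to $\kappa_{n+1}(\Wt\setminus\Sdeb\setminus\Sdea)(V)=\kappa_{n+1}\rightdescent(\phi(\pi))$; here the multiset cancellation $(\Wt\sqcup\Neb)\setminus\Sdeb\setminus\Neb=\Wt\setminus\Sdeb$ needs the observation that the extra copies $\Neb$ being adjoined are disjoint (as an index set) from $\Sdeb$ and get removed cleanly, while $\Sdea$ appears and disappears consistently — and crucially $\kappa_{n+1}$ commutes with $\sqcup$ and $\setminus$ on multisets, so it can be pulled outside. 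The $\leftdescent$ component uses $\Het\setminus\Wt(W)=\kappa_{n+1}(\Het\setminus\Wt)(V)$, which is the cleanest of all.

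The main obstacle I anticipate is purely the multiset arithmetic in the pattern components: one must be scrupulous that the disjoint-union/set-minus operations on multisets-of-indices behave as expected — in particular that $A\sqcup B\setminus C\setminus B=A\setminus C$ holds here because the relevant index sets ($\Neb$, $\Sdeb$, $\Nea$, $\Sdea$ as subsets of $[n]$ carrying multiplicities from $\Wt$) are genuinely disjoint as ordinary sets, so no unintended multiplicity is left behind. Once that verification is in place the corollary drops out by substitution, and I would simply state that the remaining cases are ``checked similarly'' in the same spirit as the proof of Corollary~\ref{coro:xi-stats}.
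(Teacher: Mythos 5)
Your proposal is correct and takes essentially the same route as the paper, which offers no proof beyond the remark that the corollary follows ``immediately'' by combining Proposition~\ref{prop:FV} with Corollary~\ref{coro:xi-stats} --- exactly the substitution you carry out, including the one genuinely nontrivial point, namely the multiset cancellation $(\Wt\sqcup\Neb\setminus\Sdea)\setminus\Sdeb\setminus\Neb=\Wt\setminus\Sdea\setminus\Sdeb$ justified by the pairwise disjointness of the underlying index sets. Two harmless transcription slips worth fixing: the middle term in your sample chain should read $\Sdeb(W)=\kappa_{n+1}\circ\Nea(V)$ rather than $\kappa_{n+1}\circ\Sdea(V)$ (the endpoints $\Dtb(\pi)=\kappa_{n+1}\circ\Aba(\phi(\pi))$ are right), and descent bottoms of $\pi$ are the linear valleys and linear double descents, i.e.\ the $\NdE$ steps rather than the $\SdE$ steps --- your eventual dictionary $\Db(\pi)=\Nde(W)$ is nevertheless the correct one and is the identity the paper itself records in Section~\ref{subsec:Mahonian}.
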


Noting that $|\Dtb(\pi)\sqcup\Dta(\pi)|=\des(\pi)$, we deduce the following numerical version of equidistribution by taking cardinalities of those multiset-valued statistics in Corollary~\ref{coro:desbased}.
\begin{corollary}
The quintuples $(\pp31-2,\pprs,\ppwrs,\des,\ides)$ and $(\pp31-2,\ppwrs,\pprs,n-1-\des,\linebreak$
$n-1-\ides)$ are equidistributed over $\SS_n$, that is,
\begin{align}
\label{eq:fivevari}
\sum_{\pi \in \SS_n} t^{\des(\pi)} s^{\ides(\pi)}& p^{\pprs(\pi)} q^{\ppwrs(\pi)} r^{\pp31-2(\pi)}=\sum_{\pi \in \SS_n} t^{n-1-\des(\pi)} s^{n-1-\ides(\pi)} p^{\ppwrs(\pi)} q^{\pprs(\pi)} r^{\pp31-2(\pi)}.
\end{align}
\end{corollary}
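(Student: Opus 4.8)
The plan is to derive this numerical equidistribution directly from Corollary~\ref{coro:desbased} by passing to cardinalities. Concretely, I would apply the bijection $\phi = \Phi_{\FV}^{-1}\circ\xi\circ\Phi_{\FV}$ and take the size of each multiset-valued statistic appearing in the displayed identities there, using two basic observations: first, that for any finite multiset $S$ of elements of $[n]$ we have $|\kappa_n(S)| = |S|$ (complementation is a bijection from multisets to multisets of the same cardinality, as stated just before Corollary~\ref{coro:xi-stats}); and second, that $|[n-1]\setminus T| = n-1-|T|$ for $T\subseteq[n-1]$, which is the source of the "$n-1-\des$" and "$n-1-\ides$" shifts.

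First I would record the cardinality translations. From $\pprs_i(\pi)$ being the multiplicity of $\pi(i)$ in $\rightascent(\pi)$ we get $\pprs(\pi) = |\rightascent(\pi)|$, and likewise $\ppwrs(\pi) = |\rightdescent(\pi)|$ and $\pp31-2(\pi) = |\leftdescent(\pi)|$. Next, the line $[n-1]\setminus\Db(\pi) = \kappa_n\circ\Db(\phi(\pi))$ in Corollary~\ref{coro:desbased} gives $n-1-\des(\pi) = |\Db(\phi(\pi))| = \des(\phi(\pi))$, using $|\Db(\sigma)| = \des(\sigma)$ (each descent contributes one descent bottom; note $\Db$ is a set here, not the weighted $\Dbot$); similarly $[n-1]\setminus\Id(\pi) = \kappa_n\circ\Id(\phi(\pi))$ gives $n-1-\ides(\pi) = \ides(\phi(\pi))$. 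Finally, from $(\rightascent,\rightdescent,\leftdescent)\,\pi = \kappa_{n+1}\circ(\rightdescent,\rightascent,\leftdescent)\,\phi(\pi)$ I take cardinalities to obtain $\pprs(\pi) = \ppwrs(\phi(\pi))$, $\ppwrs(\pi) = \pprs(\phi(\pi))$, and $\pp31-2(\pi) = \pp31-2(\phi(\pi))$.

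Assembling these, for every $\pi\in\SS_n$ the permutation $\sigma := \phi(\pi)$ satisfies $\des(\sigma) = n-1-\des(\pi)$, $\ides(\sigma) = n-1-\ides(\pi)$, $\pprs(\sigma) = \ppwrs(\pi)$, $\ppwrs(\sigma) = \pprs(\pi)$, and $\pp31-2(\sigma) = \pp31-2(\pi)$. Therefore the monomial $t^{\des(\pi)}s^{\ides(\pi)}p^{\pprs(\pi)}q^{\ppwrs(\pi)}r^{\pp31-2(\pi)}$ equals $t^{n-1-\des(\sigma)}s^{n-1-\ides(\sigma)}p^{\ppwrs(\sigma)}q^{\pprs(\sigma)}r^{\pp31-2(\sigma)}$, and since $\phi$ is a bijection of $\SS_n$, summing over $\pi$ is the same as summing over $\sigma$, which yields exactly \eqref{eq:fivevari}. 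Equivalently, one can phrase the conclusion as the stated equidistribution of the quintuples $(\pp31-2,\pprs,\ppwrs,\des,\ides)$ and $(\pp31-2,\ppwrs,\pprs,n-1-\des,n-1-\ides)$ under $\phi$.

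There is no real obstacle here — the statement is a formal consequence of Corollary~\ref{coro:desbased} once one is careful about two bookkeeping points. The one subtlety worth flagging explicitly is the distinction between the set-valued $\Db$ (descent bottoms) and the multiset-valued $\Dbot$: Corollary~\ref{coro:desbased} uses $\Db$, whose cardinality is precisely $\des$, so the "$n-1-\des$" shift comes out correctly; had one accidentally read $\Dbot$ there, the cardinality would be $\dbot$ rather than $\des$. The only other thing to check is that all the multisets in question consist of elements drawn from $[n]$ (for the $\kappa_{n+1}$-statistics) or $[n-1]$ (for $\Db$, $\Id$), so that the cardinality-preservation of $\kappa$ and the complement count $|[n-1]\setminus T| = n-1-|T|$ apply verbatim; this is immediate from the definitions of the statistics involved.
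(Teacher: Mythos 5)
Your proposal is correct and follows essentially the same route as the paper, which likewise obtains \eqref{eq:fivevari} by taking cardinalities of the multiset-valued statistics in Corollary~\ref{coro:desbased} (the paper extracts the $\des$ relation from $|\Dtb\sqcup\Dta|=\des$ in the first display, while you equivalently use the $\Db$ line; both give $n-1-\des(\pi)=\des(\phi(\pi))$). Your bookkeeping on $|\kappa_n(S)|=|S|$, $|[n-1]\setminus T|=n-1-|T|$, and the $\Db$ versus $\Dbot$ distinction is accurate.
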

\begin{remark}
Note that taking the complement $$\pi\mapsto\pi^{c}:=(n+1-\pi(1))(n+1-\pi(2))\cdots(n+1-\pi(n))$$ readily yields the equidistribution between $(\pprs,\ppwrs,\des,\ides)$ and $(\ppwrs,\pprs,n-1-\des,\linebreak$
$n-1-\ides)$, but $\pp31-2(\pi)=\pp31-2(\pi^c)$ is not true in general. It might be interesting to look for a direct bijection not involving Laguerre histories that proves \eqref{eq:fivevari}.
\end{remark}

Setting $s = p = 1$ in \eqref{eq:fivevari}, we have
\begin{equation}\label{eq:threevari1}
  \sum_{\pi \in \SS_n} t^{\des(\pi)}  q^{\ppwrs(\pi)} r^{\pp31-2(\pi)} =
\sum_{\pi \in \SS_n} t^{n-1-\des(\pi)}  q^{\pprs(\pi)} r^{\pp31-2(\pi)}.
\end{equation}
Shin and Zeng \cite[Theorem~2]{SZ12} derived an expansion for $\sum_{\pi \in \SS_n} t^{\des(\pi)}  q^{\pprs(\pi)} r^{\pp31-2(\pi)}$ that implies in particular
\begin{equation}\label{eq:threevari2}
\sum_{\pi \in \SS_n} t^{\des(\pi)}  q^{\pprs(\pi)} r^{\pp31-2(\pi)} = \sum_{\pi \in \SS_n} t^{n-1-\des(\pi)}  q^{\pprs(\pi)} r^{\pp31-2(\pi)}.
\end{equation}
Combining (\ref{eq:threevari1}) and  (\ref{eq:threevari2}),
we obtain an alternative proof of
\begin{equation}\label{eq:threevari3}
  \sum_{\pi \in \SS_n} t^{\des(\pi)}  q^{\pprs(\pi)} r^{\pp31-2(\pi)} =
\sum_{\pi \in \SS_n} t^{\des(\pi)}  q^{\ppwrs(\pi)} r^{\pp31-2(\pi)},
\end{equation}
which was also shown by Shin and Zeng \cite[Eqn.~(39)]{SZ12} via the common continued fraction expansion possessed by both sides of the equation.

Notice that $\SS_n(\pp31-2)=\SS_n(312)$. Setting $r=0$ in (\ref{eq:threevari3}),  we arrive at
\begin{equation}\label{eq:twovari1}
  \sum_{\pi \in \SS_n(312)} t^{\des(\pi)}  q^{\pprs(\pi)}=
\sum_{\pi \in \SS_n(312)} t^{\des(\pi)}  q^{\ppwrs(\pi)}.
\end{equation}
This fact was  first explicitly stated by Fu, Tang, Han and Zeng \cite[Thm.~1.1]{FTHZ}, where another eight interpretations for the polynomial in \eqref{eq:twovari1} were found.

In the end of this subsection, we extend \eqref{eq:threevari2} to a multiset-valued version and prove it bijectively via Br\"and\'en's modified Foata--Strehl action (MFS-action); see \cite{FSt,Bra08,SWG}. Our proof of the next theorem involves the linear statistics $\lpk$, $\lval$, $\ldd$, and $\lda$, with new boundary condition $\pi(0)=\pi(n+1)=0$. To emphasize this subtle distinction, we write them as $\lpk^*$, $\lval^*$, $\ldd^*$, and $\lda^*$. The corresponding set-valed statistics $\Lpk^*$, $\Lval^*$, $\Ldd^*$, and $\Lda^*$ are understood similarly if needed.
\begin{theorem}\label{thm:tristat}
There is an involution
$\varphi: \SS_n \to \SS_n$  such that
\begin{align}
\label{eq:tristat}
  (\des,\rightascent,\leftdescent)\pi &=(n-1-\des,\rightascent,\leftdescent) \varphi(\pi),\\
  \rightdescent(\varphi(\pi)) &= \rightdescent(\pi)\sqcup \Ldd^*(\pi)\setminus\Lda^*(\pi).
\end{align}
In particular, taking cardinalities of the set-valued statistics in \eqref{eq:tristat}, we recover \eqref{eq:threevari2}.
\end{theorem}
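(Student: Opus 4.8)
The plan is to realize the involution $\varphi$ as a suitable variant of Br\"and\'en's modified Foata--Strehl action (MFS-action) on permutations, adapted to the boundary condition $\pi(0)=\pi(n+1)=0$. Recall that for a fixed value $x\in[n]$, the MFS-action $\psi_x$ slides the letter $x$ either to the left or to the right past a maximal block of letters that are all smaller than $x$, effectively toggling whether $x$ sits at a ``valley--type'' or ``peak--type'' position relative to its immediate neighbors; the maps $\psi_x$ for different $x$ commute, and for any subset $S\subseteq[n]$ one can form $\psi_S:=\prod_{x\in S}\psi_x$. With the boundary $0$'s in place, a letter $x$ is movable by $\psi_x$ precisely when $x$ is a linear double ascent or a linear double descent (in the starred sense), and $\psi_x$ swaps $\Lda^*\leftrightarrow\Ldd^*$ at $x$ while fixing $\Lpk^*,\Lval^*$; linear peaks and valleys are never moved. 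I would then set $\varphi:=\psi_{[n]}$, i.e.\ apply the toggle at \emph{every} value simultaneously.

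First I would record the effect of $\varphi=\psi_{[n]}$ on the linear peak/valley/ascent/descent data: since each $\psi_x$ turns a double ascent into a double descent and vice versa, and the set of peaks/valleys is unchanged, we get $\Lpk^*(\varphi(\pi))=\Lpk^*(\pi)$, $\Lval^*(\varphi(\pi))=\Lval^*(\pi)$, $\Lda^*(\varphi(\pi))=\Ldd^*(\pi)$, $\Ldd^*(\varphi(\pi))=\Lda^*(\pi)$. Since with the $0$-boundary $\des(\pi)=\lpk^*(\pi)+\ldd^*(\pi)$ and $n-1-\des(\pi)=\lval^*(\pi)+\lda^*(\pi)-1$ (care must be taken with the two boundary slots, and I would double-check the off-by-one using $\pi(0)=\pi(n+1)=0$ versus $\pi(0)=-\infty,\pi(n+1)=\infty$), one obtains $\des(\varphi(\pi))=n-1-\des(\pi)$, which is the first coordinate of \eqref{eq:tristat}. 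Next I would verify that $\rightascent$ and $\leftdescent$ are invariant under each $\psi_x$, hence under $\varphi$: this is the crux and I would argue it locally, showing that sliding $x$ past a block of smaller letters neither creates nor destroys an occurrence of the vincular patterns underlying $\rightascent(\pi)=\{\pi(i):\exists\,j>i,\ \pi(j)<\pi(i)<\pi(j+1)\}$ and $\leftdescent(\pi)$, because the only letters whose relevant order relations change are $x$ itself and the letters in the block it jumps over, all of which are smaller than $x$ and therefore cannot play the role of the ``large'' entry $\pi(i)$ in either pattern, while $x$ as $\pi(i)$ sees the same set of witnessing pairs $(\pi(j),\pi(j+1))$ before and after the slide. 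Finally, for the last displayed identity I would compute how $\rightdescent$ changes: tracking the single letter $\pi(i)=x$ through $\psi_x$, it enters $\rightdescent$ exactly when $x$ is a double descent (in the $0$-bordered sense) and leaves it exactly when $x$ is a double ascent, which yields $\rightdescent(\psi_x(\pi))=\rightdescent(\pi)\sqcup(\{x\}\cap\Ldd^*(\pi))\setminus(\{x\}\cap\Lda^*(\pi))$; composing over all $x\in[n]$ and using that the toggles commute gives $\rightdescent(\varphi(\pi))=\rightdescent(\pi)\sqcup\Ldd^*(\pi)\setminus\Lda^*(\pi)$. That $\varphi$ is an involution is immediate since each $\psi_x$ is an involution and they commute.

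The main obstacle I expect is the boundary bookkeeping: the statistics $\rightascent$, $\rightdescent$, $\leftdescent$ and the patterns $2\underline{13}$, $2\underline{31}$, $\underline{31}2$ are defined with the genuine one-line notation (no borders) and refer to the last entry $\pi(n)$ implicitly through the ranges $j<n$ and $j\ge 1$, whereas the MFS-action is cleanest with the $0$-border. I would need to check carefully that the $\psi_x$-moves do not involve position $n$ in a way that changes the $j<n$ cutoffs — in particular that the letter occupying position $n$ is never one of the letters that gets slid (it can only change if $x$ itself lands at position $n$, and I must confirm this is consistent with the pattern counts), and that the invariance of $\rightascent$ and $\leftdescent$ survives this edge effect. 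Once the local invariance lemma for $\rightascent$ and $\leftdescent$ under a single $\psi_x$ is established with the correct boundary convention, the rest is a routine assembly, and taking cardinalities in \eqref{eq:tristat} together with $|\rightascent(\pi)|=\pprs(\pi)$ recovers \eqref{eq:threevari2}.
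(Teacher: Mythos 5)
Your plan is the paper's proof in outline: the involution is exactly Br\"and\'en's modified Foata--Strehl map applied at every value, $\varphi=\varphi'_{[n]}$, and the verification splits into the descent computation, the invariance of $\rightascent$ and $\leftdescent$, and the controlled change of $\rightdescent$. The descent part and the involution property are fine (your off-by-one resolves to $n-1-\des(\pi)=\lval^*(\pi)+\lda^*(\pi)$, since $\lpk^*=\lval^*+1$, and the conclusion $\des(\varphi(\pi))=n-1-\des(\pi)$ is correct).

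However, the justification you give for what you yourself call the crux --- the invariance of $\rightascent$ and $\leftdescent$ --- rests on a mischaracterization of the MFS-action and does not work as written. In the $x$-factorization $\pi=w_1w_2xw_3w_4$ the blocks $w_2,w_3$ consist of letters \emph{larger} than $x$, not smaller: $\varphi'_x$ moves $x$ past a maximal block of larger letters. So your argument that the jumped-over letters ``cannot play the role of the large entry $\pi(i)$'' fails twice over: those letters are the large ones, and in $2\underline{13}$ and $\underline{31}2$ the distinguished entry $\pi(i)$ is the \emph{middle} value of the pattern, not an extreme one. Likewise, the claim that ``$x$ as $\pi(i)$ sees the same set of witnessing pairs before and after the slide'' would be false for a hop over smaller letters (adjacent pairs inside such a block could sandwich $x$); it is true for the actual action, but for a different reason, namely that every adjacent pair inside $w_3$ (or $w_2$) has both entries above $x$ while the two boundary pairs destroyed at the ends of the block are exactly compensated by the two created. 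The invariance statement itself is correct, but it needs either this corrected local bookkeeping or, as the paper does, the structural observation (Fact~\ref{fact:pkval}) that $\pprs_i$, $\ppwrs_i$ and $\pp31-2_i$ can each be computed from \emph{consecutive} valley--peak pairs, which $\varphi$ preserves; since a hopping point passes exactly one peak, this gives the invariance of $\rightascent$ and $\leftdescent$ and the $\pm\chi(\pi(i)\in\Lda^*/\Ldd^*)$ correction for $\rightdescent$ in one stroke. In short: right map, right architecture, but the reason offered for the key invariance must be replaced before this is a proof.
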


The MFS-action, sometimes referred to as ``valley hopping'' (see for example \cite[Sect.~4.1]{Ath}), has become one of the standard tools for proving and combinatorially interpreting the positivity of $\gamma$-coefficients for the Eulerian polynomials and their variants. To make this paper self-contained, we include here its definition and one example.

\begin{Def}[MFS-action]
Let $\pi\in\SS_n$ with boundary condition $\pi(0)=\pi(n+1)=0$,
 for  any  $x\in[n]$, the {\em$x$-factorization} of $\pi$ reads $\pi=w_1 w_2x w_3 w_4,$ where $w_2$ (resp.~$w_3$) is the maximal contiguous subword immediately to the left (resp.~right) of $x$ whose letters are all larger than $x$.  Following Foata and Strehl~\cite{FSt} we define the action $\varphi_x$ by
$$
\varphi_x(\pi)=w_1 w_3x w_2 w_4.
$$
Note that if $x$ is a linear double ascent (resp. linear double descent) of $\pi$, then $w_2=\varnothing$ (resp. $w_3=\varnothing$), and if $x$ is a linear peak then
$w_2=w_3=\varnothing$.
For instance, if $x=3$ and $\pi=28531746\in\SS_7$, then $w_1=2,w_2=85,w_3=\varnothing$ and $w_4=1746$.
Thus $\varphi_x(\pi)=23851746$. Br\"and\'en~\cite{Bra08} modified the map $\varphi_x$ to be
\begin{align*}
\varphi'_x(\pi):=
\begin{cases}
\varphi_x(\pi),&\text{if $x$ is not a linear valley of $\pi$};\\
\pi,& \text{if $x$ is a linear valley of $\pi$.}
\end{cases}
\end{align*}
It is clear that $\varphi'_x$'s are involutions and commute. For any subset $S\subseteq[n]$ we can then define the map $\varphi'_S :\SS_n\to \SS_n$ by
\begin{align*}
\varphi'_S(\pi)=\left(\prod_{x\in S}\varphi'_x\right)(\pi),
\end{align*}
which is again an involution on $\SS_n$. Hence the group $\Z_2^n$ acts on $\SS_n$ via the functions $\varphi'_S$, $S\subseteq[n]$. This action will be called  the {\em Modified Foata--Strehl action} ({\em MFS-action} for short).
\end{Def}

\begin{figure}[htb]
\setlength {\unitlength} {0.8mm}
\begin {picture} (90,40) \setlength {\unitlength} {1mm}
\thinlines
\put(24,8){\dashline{1}(1,0)(25,0)}
\put(50,8){\vector(1,0){0.1}}
\put(-22,14){\dashline{1}(-1,0)(32,0)}
\put(10,14){\vector(1,0){0.1}}
\put(6,18){\dashline{1}(-1,0)(-25,0)}
\put(-20,18){\vector(-1,0){0.1}}
\put(46,12){\dashline{1}(-1,0)(-18,0)}
\put(28,12){\vector(-1,0){0.1}}
\put(-37, -2){$0$}
\put(-35, 2){\circle*{1.3}}
\put(-7,31){\line(-1,-1){28}}
\put(-7,31){\circle*{1.3}}
\put(20,4){\line(-1,1){27}}
\put(-7,32){$9$}
\put(-24,14){\circle*{1.3}}\put(-23,10){$5$}
\put(6,18){\circle*{1.3}}\put(9,16){$6$}
\put(20,4){\circle*{1.3}}
\put(20,4){\circle*{1.3}}\put(19.1,0){$1$}
\put(24,8){\circle*{1.3}}\put(24.5,4.5){$3$}
\put(52,6){\circle*{1.3}}\put(51.2,2){$2$}
\put(20,4){\line(1,1){17}}\put(37,21){\circle*{1.3}}
\put(37,21){\line(1,-1){15}}\put(36.5,22){$7$}
\put(46,12){\circle*{1.3}}\put(48,10){$4$}
\put(52,6){\line(1,1){21}}\put(73,27){\circle*{1.3}}\put(72.5,28.5){$8$}
\put(72.5,27){\line(1,-1){25}}
\put(97, 2){\circle*{1.3}}
\put(97,-2){$0$}
\end{picture}
\caption{MFS-actions on $596137428$ (recall $\pi(0)=\pi(10)=0$)
\label{valhop}}
\end {figure}
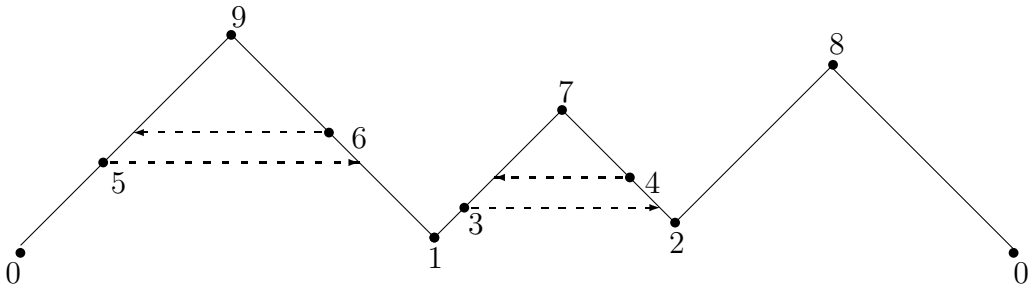

See Figure~\ref{valhop} for an illustration, where  exchanging
$w_2$ and  $w_3$ in the $x$-factorization is equivalent to
moving $x$ from a linear double ascent to a linear double descent or vice versa.


Now the map we need for Theorem~\ref{thm:tristat} is precisely $\varphi:=\varphi'_{[n]}$. In other words, the points always hop whenever they can. So we see $\varphi(596137428)=695147328$ for the permutation in Figure~\ref{valhop}.



\begin{proof}[Proof of Theorem~\ref{thm:tristat}]
We have seen that $\varphi$ as defined above is an involution. It remains to verify \eqref{eq:tristat}. The definition of $\varphi$ yields the following facts for each $\pi\in\SS_n$:
\begin{align*}
& \lpk^*(\pi)=\lval^*(\pi)+1,\\
& \des(\pi)=\lval^*(\pi)+\ldd^*(\pi),\\
& \ldd^*(\pi)=\lda^*(\varphi(\pi)),~\lda^*(\pi)=\ldd^*(\varphi(\pi)),\\
& \lpk^*(\pi)=\lpk^*(\varphi(\pi)),~\lval^*(\pi)=\lval^*(\varphi(\pi)).
\end{align*}
Then we can compute
\begin{align*}
\des(\pi) &= \lval^*(\pi)+\ldd^*(\pi) = n-\lpk^*(\pi)-\lda^*(\pi) = n-\lpk^*(\varphi(\pi))-\ldd^*(\varphi(\pi))\\
&= n-(\lval^*(\varphi(\pi))+1)-\ldd^*(\varphi(\pi)) = n-1-\des(\varphi(\pi)).
\end{align*}
For the remaining three equalities, we make a key observation.
\begin{Fact}\label{fact:pkval}
For $1\le i\le n$, $\pprs_i(\pi)$ is the number of pairs $(\pi(j),\pi(k))\in\Lval^*(\pi)\times\Lpk^*(\pi)$ such that 1) $i<j<k$; 2) $\pi(j)<\pi(i)<\pi(k)$; and 3) $\pi(j)$ and $\pi(k)$ are consecutive (linear) valley and peak, in the sense that for each $j<l<k$, $\pi(l)$ is neither a valley nor a peak. Similarly, $\ppwrs_i(\pi)$ is the number of pairs $(\pi(j),\pi(k))\in\Lpk^*(\pi)\times\Lval^*(\pi)$ such that 1) $i<j<k$; 2) $\pi(k)<\pi(i)<\pi(j)$; and 3) $\pi(j)$ and $\pi(k)$ are consecutive peak and valley. $\pp31-2_i(\pi)$ is the number of pairs $(\pi(j),\pi(k))\in\Lpk^*(\pi)\times\Lval^*(\pi)$ such that 1) $j<k<i$; 2) $\pi(k)<\pi(i)<\pi(j)$; and 3) $\pi(j)$ and $\pi(k)$ are consecutive peak and valley.
\end{Fact}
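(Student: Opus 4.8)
The statement to prove is Fact~\ref{fact:pkval}, which reinterprets the three coordinate pattern statistics $\pprs_i(\pi)$, $\ppwrs_i(\pi)$, and $\pp31-2_i(\pi)$ in terms of \emph{consecutive} linear valley/peak pairs that straddle the value $\pi(i)$. The plan is to prove each of the three assertions by establishing a bijection between the set of indices $j$ counted by the original definition and the set of consecutive valley–peak (or peak–valley) pairs described in the Fact. I will treat the case of $\pprs_i$ in detail; the other two are entirely analogous and I would just indicate the parallel argument.

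\medskip

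First I would fix $i$ and recall that $\pprs_i(\pi)=\#\{j : i<j<n,\ \pi(j)<\pi(i)<\pi(j+1)\}$; such a $j$ is the left end of an ascent ``crossing the level $\pi(i)$ from below.'' The key idea is that each such crossing can be charged to a unique nearby consecutive valley–peak pair. Given one such $j$, I would look to the left of position $j$ (among positions $>i$) for the nearest position $j'$ with $j'\le j$ that is a linear valley, and to the right of position $j+1$ for the nearest position $k$ with $k\ge j+1$ that is a linear peak; because the one-line word (with the convention $\pi(0)=\pi(n+1)=0$, or in our variant $\pi(n+1)=\infty$ — here one must be careful which boundary convention is in force, since the Fact is stated for $\lpk^*$ etc.\ with $\pi(0)=\pi(n+1)=0$) alternates between runs of ascents and runs of descents, the stretch from $j'$ to $k$ is a single ``up-run'' through a peak, and $\pi(j'),\pi(k)$ are consecutive valley and peak with no valley or peak strictly between them. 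One checks $\pi(j')\le\pi(j)<\pi(i)<\pi(j+1)\le\pi(k)$, so the pair $(\pi(j'),\pi(k))$ satisfies conditions (1)–(3) of the Fact. Conversely, given a consecutive valley–peak pair $(\pi(j'),\pi(k))$ with $i<j'<k$ and $\pi(j')<\pi(i)<\pi(k)$, the values $\pi(j'),\pi(j'+1),\ldots,\pi(k)$ strictly increase (this is exactly what ``consecutive valley then peak'' means), so there is a \emph{unique} index $j\in[j',k-1]$ with $\pi(j)<\pi(i)<\pi(j+1)$. These two maps are mutually inverse, giving $\pprs_i(\pi)=\#\{$such pairs$\}$.

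\medskip

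For $\ppwrs_i$ I would run the symmetric argument with descents in place of ascents: an index $j$ with $i<j<n$ and $\pi(j+1)<\pi(i)<\pi(j)$ marks a descent crossing level $\pi(i)$ downward, which sits inside a unique maximal ``down-run'' from a peak $\pi(j')$ (nearest peak at or left of $j$, with position $>i$) to a valley $\pi(k)$ (nearest valley at or right of $j+1$), and along this down-run the values strictly decrease, so $j$ is recovered uniquely from the pair. For $\pp31-2_i$ the only change is the location constraint: we count $j$ with $1\le j<i-1$ and $\pi(j+1)<\pi(i)<\pi(j)$, i.e.\ descents crossing level $\pi(i)$ downward but lying strictly to the \emph{left} of position $i-1$; the same peak-to-valley down-run argument applies, now with the pair $(\pi(j'),\pi(k))\in\Lpk^*\times\Lval^*$ satisfying $j'<k<i$ (one must verify $k<i$, which follows because the run containing $j$ ends before reaching position $i$, as $\pi(i)$ lies strictly between $\pi(j)$ and $\pi(j+1)$ forces the run to terminate at or before position $j+1\le i-1<i$).

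\medskip

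The main obstacle I anticipate is purely bookkeeping rather than conceptual: one must be scrupulous about the boundary conventions (the Fact uses the starred statistics with $\pi(0)=\pi(n+1)=0$, so e.g.\ position $1$ counts as a valley if $\pi(1)<\pi(2)$, and position $n$ counts as a valley if $\pi(n)<0$ is false — hence $n$ is never a valley, which is consistent with the requirement $k<n$ implicit in conditions like $i<j<k$), and about the fact that the up-run / down-run containing a given crossing index $j$ always has a genuine valley at its lower end and a genuine peak at its upper end \emph{within the allowed index range} — this is where the alternation of maximal monotone runs in a permutation word, together with the chosen boundary values, is used. Once the alternating-runs structure is set up cleanly as a lemma, each of the three bijections is a one-line verification, so I would front-load that structural observation and then dispatch the three cases uniformly.
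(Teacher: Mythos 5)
Your overall strategy---charging each occurrence counted by $\pprs_i$, $\ppwrs_i$, or $\pp31-2_i$ to the unique maximal monotone run containing the adjacent pair $(\pi(j),\pi(j+1))$, and observing that a strictly monotone run crosses the level $\pi(i)$ at most once---is the natural one, and since the paper states this Fact as a bare observation inside the proof of Theorem~\ref{thm:tristat} with no argument at all, your write-up supplies more than the authors do. The $\pprs_i$ and $\pp31-2_i$ cases are essentially complete: the valley at the foot of the up-run is automatically at a position $>i$ (as you note, $j'\le i$ would force $\pi(i)<\pi(j)$), and in the $\pp31-2$ case the valley terminating the down-run is automatically at a position $<i$ --- although your stated reason there (``the run terminates at or before position $j+1$'') is incorrect, since the run may continue decreasing well past $j+1$; the correct argument is that if the run reached position $i$ then $\pi(i)<\pi(j+1)$, contradicting $\pi(j+1)<\pi(i)$.

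The genuine gap is in the $\ppwrs_i$ case, and it is precisely the boundary issue you flag and then dismiss. Under the starred convention $\pi(0)=\pi(n+1)=0$, position $n$ is never a valley, so when the maximal decreasing run containing the descent $(j,j+1)$ extends all the way to position $n$, there is no valley $\pi(k)$ to pair with the peak and your bijection has nowhere to send $j$. This is not mere bookkeeping: the statement as literally written fails there. For $\pi=231$ and $i=1$ one has $\ppwrs_1(\pi)=1$ (take $j=2$), yet $\Lval^*(231)=\emptyset$, so the Fact's count is $0$. (Your parenthetical ``position $1$ counts as a valley if $\pi(1)<\pi(2)$'' is also backwards under this convention --- since $\pi(0)=0<\pi(1)$, position $1$ is a double ascent or a peak, never a valley --- though that particular slip is harmless.) To make both the statement and your proof correct one must adjoin a virtual valley of value $0$ at position $n+1$, so that the terminal decreasing run always ends in a ``valley'' with value $0<\pi(i)$, or equivalently account separately for the at most one crossing of level $\pi(i)$ by the final run; with that amendment your run-decomposition argument does close all three cases.
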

Recall that the map $\varphi$ preserves all the peaks and valleys, and when a point hops, it passes by one peak. Combining this with Fact~\ref{fact:pkval}, we deduce that for $1\le i\le n$,
\begin{align*}
\pprs_i(\pi) &=\pprs_i(\varphi(\pi)),\\
\pp31-2_i(\pi) &=\pp31-2_i(\varphi(\pi)),\\
\ppwrs_i(\pi) &=\ppwrs_i(\varphi(\pi))+\chi(\pi(i)\in\Lda^*(\pi))-\chi(\pi(i)\in\Ldd^*(\pi)).
\end{align*}
Turning them into the results for the corresponding multiset-valued statistics completes the proof.
\end{proof}

\subsection{Application with \texorpdfstring{$\Phi_{\FZ}$}{Foata-Zeilberger's bijection}}\label{subsec:FZ}

Foata and Zeilberger gave another bijection
from $\SS_n$ to $\mathtt{L}_n^*$ in \cite{FZ90}, where $\mathtt{L}_n^*$ was referred to as the set of ``weighted paths''. Here we will present a shifted variant $\Phi_{\FZ}$ from $\SS_n$ to $\LH_n$.
Before that, we recall some cyclic permutation statistics. For a permutation
$\pi=\pi(1)\pi(2)\ldots \pi(n) \in \SS_n$, set
\begin{align*}
   &\Exc(\pi) =\{\pi(i) : \pi(i) > i, 1 \leq i \leq n\},  \\[3pt]
  &\Nexc(\pi) =\{\pi(i) : \pi(i) \leq i, 1 \leq i \leq n\},  \\[3pt]
   &\Excp(\pi)  =\{i: \pi(i) > i, 1 \leq i \leq n\}
\end{align*}
to be the set of {\em excedances, non-excedances} and {\em excedance positions} of $\pi$. We define the corresponding refined statistics as
\begin{align*}
  \Excb(\pi)=\{\pi(i) : \pi(i) <\pi(n), i \in \Excp(\pi)\}, &\,\,\,\, \Exca(\pi)=\{\pi(i) : \pi(i) >\pi(n), i \in \Excp(\pi)\}, \\[3pt]
  \Nexcb(\pi)=\{\pi(i) : \pi(i)<\pi(n), i \notin \Excp(\pi)\}, & \,\,\,\,\Nexca(\pi)=\{\pi(i) : \pi(i) >\pi(n), i \notin \Excp(\pi)\}, \\[3pt]
  \Excpb(\pi)=\{i : i <\pi(n), i \in \Excp(\pi)\}, &\,\,\,\, \Excpa(\pi)=\{i : i >\pi(n), i \in \Excp(\pi)\}.
\end{align*}
Further, we need the multiset-valued statistics {\em excedance difference} and {\em excedance position sum}\footnote{Here again, $\Excp(\pi)$ completely determines $\Ebot(\pi)$ and vice versa.} defined as
\begin{align*}
\Edif(\pi) &=\bigcup_{i \in \Excp(\pi)}\{i+1,i+2, \cdots, \pi(i)\},\\
\Ebot(\pi) &=\bigcup_{i \in \Excp(\pi)}\{i^{i}\}.
\end{align*}

The {\em excedance subword} of $\pi$, denoted by $\pi_E$,
is the word consisting of $\pi(i)$ with $i \in \Excp(\pi)$ in the order
induced by $\pi$. The {\em non-excedance subword} of $\pi$ is denoted
by $\pi_N$ and consists of $\pi(i)$ with $i \notin \Excp(\pi)$.
Given $i \in \Excp(\pi)$, we say the {\em imversion bottom number}
of $\pi(i)$ is $d$, if there are  exactly $d$ letters in $\pi_E$ to the left of $\pi(i)$ that are greater than $\pi(i)$. Similarly, for $i \notin \Excp(\pi)$, we say the {\em inversion top number} of $\pi(i)$ is $d$, if there are exactly $d$ letters in $\pi_N$ to the
right of $\pi(i)$ that are smaller than $\pi(i)$. The {\em side number} of $i$, denoted by $s_i$, is either the imversion bottom number
of $\pi(i)$ if $i \in \Excp(\pi)$, or the inversion top number of $\pi(i)$ if $i \notin \Excp(\pi)$. And the sequence $s=s_1 s_2 \cdots s_n$ is referred to as the side number of $\pi$. We define $\Ine(\pi)$ to be the set containing $s_i$ copies of $\pi(i)$.

\begin{example}
For $\pi=947612853$, we have $\pi_E=9 4 7 6 8$ and $\pi_N=1 2 5 3$. Moreover, we see that $\Exc(\pi)=\Exca(\pi)=\{4,6,7,8,9\}$, $\Excb(\pi)=\emptyset$,
$\Excp(\pi)=\{1,2,3,4,7\}$, $\Excpb(\pi)=\{1,2\}$,
$\Excpa(\pi)=\{4,7\}$,
$\Nexc(\pi)=\{1,2,3,5\}$,
$\Nexcb(\pi)=\{1,2\}$,
$\Nexca(\pi)=\{5\}$,
$\Edif(\pi)=\{2,3^2,4^3,5^3,6^3,7^2,8^2,9\}$, and
$\Ebot(\pi)=\{1,2^2,3^3,4^4,7^7\}$. The side number of $\pi$ is $011200110$, thus $\Ine(\pi)=\{4,5,6^2,7,8\}$.
\end{example}

Now we are ready to rephrase the bijection $\Phi_{\FZ}$ of
Foata and Zeilberger as follows. Given $\pi=\pi(1) \pi(2) \cdots \pi(n) \in \SS_n$,  $i$ is called a
\begin{itemize}
  \item {\em cyclic double ascent} if $\pi^{-1}(i) < i <\pi(i)$;
  \item {\em cyclic double descent} if $\pi^{-1}(i) \geq  i \geq \pi(i)$;
  \item {\em cyclic peak} if $\pi^{-1}(i) < i >\pi(i)$;
  \item {\em cyclic valley} if $\pi^{-1}(i) > i <\pi(i)$.
\end{itemize}
The set of cyclic peaks (resp.~valleys, double ascents, double descents) is denoted by $\Cpk$ (resp.~$\Cval, \Cda, \Cdd$).

For $\pi \in \SS_n$ whose side number is given by $s=s_1 s_2 \cdots s_n$, we define $\Phi_{\FZ}(\pi)=(w,h,c)$, where for $1\le i\le n$, $c_{i}=s_{\pi^{-1}(i)}+\chi(w_{i}=\SdE)$,
\begin{equation*}
w_i= \left\{
  \begin{array}{ll}
\N, & \mbox{ $ i \in \Cval(\pi)$,}\\[3pt]
\S, & \mbox{  $ i \in \Cpk(\pi)$,} \\[3pt]
\E, & \mbox{  $ i \in \Cdd(\pi)$,}\\[3pt]
\dE, & \mbox{  $ i \in \Cda(\pi)$,}
 \end{array} \right.
\end{equation*}
and $h$ is uniquely deduced from $w$.

Conversely, given $W=(w,h,c) \in \LH_n$, we may recover $\pi=\Phi_{\FZ}^{-1}(W)$ as follows. Firstly, we construct two words, which we call $w_E$ and $w_N$, in two-line notations.
The first row of $w_E$ (resp. $w_N$) is monotonously increasing, and it consists of all $i$ with $w_i=\NdE$ (resp. $w_i=\SE$), while the second row of $w_E$ (resp. $w_N$) is composed of all $i$ with $w_i=\SdE$ (resp. $w_i=\NE$). The elements in the second row of $w_E$ (resp. $w_N$) are ordered so that the imversion bottom number (resp.~inversion top number) of the element $i$ is $c_{i}-\chi(w_{i}=\SdE)$. To be specific, the arrangement of the elements in the second row of $w_E$ is given as follows. Pick out the smallest element in the second row of $w_E$, say $x$, and place it at the position such that there are $c_x-1$ empty slots to the left of it. Then, take the smallest element in what remains in the second row of $w_E$, say $y$, and place it at the position such that there are $c_y-1$ empty slots to the left of it. Repeating like this until
all the elements in the second row of $w_E$ have been placed. For the arrangement of the elements in the second row of $w_N$, we proceed analogously as follows. Pick out the largest element in the second row of $w_N$, say $x$, and place it at the position such that there are $c_x$ empty slots to the right of it. Then, pick out the largest remaining element, say $y$, and place it at the position such that there are $c_y$ empty slots to the right of it. Repeating like this until
all the elements in the second row of $w_N$ are in position. After that, we concatenate $w_E$ and $w_N$, then rearrange the columns so that the first row becomes increasing. What we have obtained in the second row of the final output is taken to be the preimage $\pi$.

As an example, for $\pi=947612853$,  $\Phi_{\FZ}(\pi)$ is exactly the sr-Laguerre history given in Figure
\ref{fig:reslagu}. For the sr-Laguerre history $W$ given in Figure
\ref{fig:reslagu}, we deduce that
\begin{equation*}
w_E= \left(
  \begin{array}{ll}
1\,\,\,  2 \,\,\, 3 \,\,\, 4 \,\,\, 7\\[3pt]
9 \,\,\, 4 \,\,\, 7 \,\,\, 6 \,\,\, 8
 \end{array} \right)  \,\,\,\,\,\, \text{and} \,\,\,\,\,\,
 w_N= \left(
  \begin{array}{ll}
5\,\,\,  6 \,\,\, 8 \,\,\, 9\\[3pt]
1 \,\,\, 2 \,\,\, 5 \,\,\, 3
 \end{array} \right).
\end{equation*}
Finally, we get $\pi=\Phi_{\FZ}^{-1}(W)=947612853$.

Let $|S|_i$ be the number of times $i$ occurs in the multiset $S$. Given integers $a$ and $b$, we denote by $(a,b]$ the set $\{a+1,a+2,\ldots,b\}$. Note that when $a\ge b$, $(a,b]$ is simply the empty set. The following proposition parallels Proposition~\ref{prop:FV}.

\begin{proposition}\label{prop:FZ}
The bijection $\Phi_{\FZ}:\SS_n\rightarrow \LH_n$ links twelve permutation statistics with their counterparts over sr-Laguerre histories as follows. For any $\pi\in\SS_n$ and $W=\Phi_{\FZ}(\pi)$, we have $\pi(n)=\cs(W)$,
\begin{align}
\label{eq:FZSET}
(\Excb,\Exca,\Excpb,\Excpa,\Nexcb,\Nexca,\Edif,\Exc\sqcup\Ine)\:\pi \\[3pt]
= (\Sdeb,\Sdea,\Ndeb,\Ndea,\Neb,\Nea, \Het,\Wt)\:W,  \nonumber
\end{align}
and
\begin{align}
\label{eq:FZSET2}
&(\Ine \sqcup \Excb \setminus \Nexca, \Ine, \Edif \setminus \Exc \setminus \Ine)\:\pi \\
&= ( \Wt\setminus\Nea\setminus\Sdea,
\Wt\setminus\Sdeb\setminus\Sdea,\Het\setminus\Wt)\:W. \nonumber
\end{align}
\end{proposition}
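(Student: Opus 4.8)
\textbf{Proof plan for Proposition~\ref{prop:FZ}.}
The plan is to verify the two displayed identities \eqref{eq:FZSET} and \eqref{eq:FZSET2} by tracking, step by step, how the bijection $\Phi_{\FZ}$ translates the cyclic-permutation data of $\pi$ into the path data of $W=(w,h,c)$. First I would establish the equality $\pi(n)=\cs(W)$. By definition of the critical step, $\cs(W)$ is the largest index $i$ with $c_i=0$; since $c_i=s_{\pi^{-1}(i)}+\chi(w_i=\SdE)$ and $c_i=0$ forces both $w_i=\NE$ and $s_{\pi^{-1}(i)}=0$, one checks that the value $i=\pi(n)$ always satisfies this (the last letter of $\pi$, sitting in the final column, contributes side number $0$ and is a non-excedance or a cyclic double descent), and that no larger index can. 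This also pins down that the critical step corresponds precisely to the last column after the reordering in the inverse construction.

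\textbf{Matching the step types and the height multiset.}
Next I would verify the first six coordinate equalities in \eqref{eq:FZSET}. The type of the $i$-th step records whether $i$ is a cyclic valley/peak/double ascent/double descent of $\pi$, i.e.\ whether $i\in\Exc(\pi)$ and what $\pi^{-1}(i)$ versus $i$ look like; together with the auxiliary statistic $\pi(n)=\cs(W)$, comparing $\pi(i)$ (or $i$) against $\pi(n)$ splits $\Exc$, $\Nexc$, $\Excp$ into the ``before/after'' refinements $\Excb/\Exca$, $\Nexcb/\Nexca$, $\Excpb/\Excpa$, which is exactly the before/after-the-critical-step dichotomy on $\NE$ and $\SdE$ steps. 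For the seventh coordinate, $\Het(W)$ is the multiset with $h_i$ copies of $i$; since $h_i$ counts the net number of $\N$ minus $\S$ steps before position $i$, i.e.\ the number of $j<i$ with $j\in\Excp(\pi)$ and $\pi(j)\ge i$ (open excedance arches straddling $i$), one identifies $\Het(W)$ with $\Edif(\pi)=\bigsqcup_{j\in\Excp(\pi)}\{j+1,\dots,\pi(j)\}$ by a standard ``arch-counting'' argument. The eighth coordinate, $\Wt(W)$ with $c_i$ copies of $i$, splits as $c_i=s_{\pi^{-1}(i)}+\chi(w_i=\SdE)$: the $\chi$-part contributes one copy of each $i$ with $w_i=\SdE$, i.e.\ each $i\in\Exc(\pi)$, giving the $\Exc(\pi)$ summand, while the side-number part contributes $s_{\pi^{-1}(i)}$ copies of $i$, which is by definition $\Ine(\pi)$.

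\textbf{Deriving \eqref{eq:FZSET2} and the main obstacle.}
Finally, \eqref{eq:FZSET2} should follow from \eqref{eq:FZSET} together with three set-theoretic identities internal to permutations, analogous to the ones used in the proof of Proposition~\ref{prop:FV}: namely
\[
\Ine(\pi)\sqcup\Excb(\pi)\setminus\Nexca(\pi)=\bigl(\Exc\sqcup\Ine\bigr)(\pi)\setminus\Nea(W)\setminus\Sdea(W),
\]
and similarly for the other two coordinates, which I would prove by unwinding $\Wt\setminus\Nea\setminus\Sdea$, $\Wt\setminus\Sdeb\setminus\Sdea$, $\Het\setminus\Wt$ via \eqref{eq:FZSET} and then re-expressing everything on the permutation side. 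Concretely, $\Wt\setminus\Nea\setminus\Sdea$ removes from $\Exc\sqcup\Ine$ exactly one copy of each element that is either an $\NE$ step after the critical step or an $\SdE$ step after it; chasing the definitions, the $\SdE$-after part cancels the extra copy contributed by excedances exceeding $\pi(n)$, turning $\Exc$ into $\Excb$, while the $\NE$-after part removes $\Nexca$-labeled copies from $\Ine$. I expect the bookkeeping in this last step — carefully accounting for which copies of each label $i$ come from the $\chi(w_i=\SdE)$ term versus the side-number term, and ensuring that set subtractions of multisets are matched copy-for-copy against $\Nexca$, $\Excb$, etc. — to be the main obstacle, as it is easy to be off by one copy of a straddling element; the cleanest route is to fix $i\in[n]$ and compare the multiplicity of $i$ on both sides using the height/side-number interpretation, exactly as the paper does in the examples following Proposition~\ref{prop:FV}.
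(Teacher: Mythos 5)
Your proposal is correct and follows the paper's proof in all essentials: the same argument for $\pi(n)=\cs(W)$ (the step indexed $\pi(n)$ gets weight $0$ because $s_n=0$ and $w_{\pi(n)}=\NE$, and every $j>\pi(n)$ gets positive weight, split into the cases $w_j=\SdE$ and $w_j=\NE$), the same direct matching of the six before/after refinements and of $\Wt(W)=\Exc(\pi)\sqcup\Ine(\pi)$ via the decomposition $c_i=s_{\pi^{-1}(i)}+\chi(w_i=\SdE)$, and the same observation that \eqref{eq:FZSET2} is a formal consequence of \eqref{eq:FZSET}. The one place you genuinely diverge is the identity $\Edif(\pi)=\Het(W)$, which is the only part the paper works out in detail: the paper proves it by induction on $i$, comparing the multiplicities of $i$ and $i+1$ in the three cases $w_i=\N$, $w_i=\E$ or $\dE$, and $w_i=\S$, whereas you propose the direct count $h_i=\#\{j\in\Excp(\pi): j<i\le\pi(j)\}$ (open excedance arches over $i$), which is $|\Edif(\pi)|_i$ by definition. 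Both are sound; your closed-form count is shorter and is essentially the computation the paper itself performs later in the proof of Theorem~\ref{thm:yzl family}, while the paper's induction localizes the verification to one step type at a time. One small correction of emphasis: your last paragraph treats the derivation of \eqref{eq:FZSET2} as the main obstacle, but once \eqref{eq:FZSET} is in hand it is pure multiset substitution ($\Nea(W)=\Nexca(\pi)$, $\Sdea(W)=\Exca(\pi)$, $\Sdeb(W)=\Excb(\pi)$, so for instance $\Wt\setminus\Nea\setminus\Sdea=(\Exc\setminus\Exca)\sqcup\Ine\setminus\Nexca=\Excb\sqcup\Ine\setminus\Nexca$), with no delicate copy-for-copy bookkeeping; the real content of the proposition lives in $\Edif=\Het$.
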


\begin{proof}
Firstly, we wish to prove that $\pi(n)=\cs(W)$.
Since $\pi(n) \leq n$, we see $\pi(n)\in \Cval(\pi)$ or $\pi(n)\in \Cdd(\pi)$. It follows that $w_{\pi(n)}=\NE$. On the other hand, we clearly have $s_n=0$, hence the label $c_{\pi(n)}=0$. It remains to show that $c_j>0$ for every $j>\pi(n)$. Recall that
$$c_j=s_{\pi^{-1}(j)}+\chi(w_j=\SdE).$$
We consider two cases. If $j > \pi^{-1}(j)$, then $w_j=\SdE$, which implies that $c_j>0$. Otherwise $j \leq \pi^{-1}(j)$, then from the fact that $j>\pi(n)$ we deduce that the inversion top number of $j$, which in this case equals $s_{\pi^{-1}(j)}$, is at least $1$. Hence, we also have $c_j>0$. We see that $c_{\pi(n)}$ is indeed the last occurrence of $0$, thus $\pi(n)=\cs(W)$ by the definition of the critical step.

Next, we proceed to prove (\ref{eq:FZSET}).
We present here an inductive proof of the multiset-valued equality $\Edif(\pi)=\Het(W)$, while the others can be verified directly through the definitions of the statistics and the construction of the bijection $\Phi_{\FZ}$.

Clearly, neither $\Edif(\pi)$ nor $\Het(W)$ contains $1$. Now suppose that $i$ occurs as many times in $\Edif(\pi)$ as in $\Het(W)$, we wish to show that the same holds for $i+1$. There are three cases to consider:
\begin{itemize}
    \item If $h_{i+1}=h_{i}+1$, then $w_{i}=\N$ and $|\Het(W)|_{i+1}=|\Het(W)|_{i}+1$. Using the induction hypothesis, it suffices to prove that $|\Edif(\pi)|_{i+1}=|\Edif(\pi)|_{i}+1$. Clearly, we have $i \in \Cval(\pi)$, namely, $\pi^{-1}(i)>i<\pi(i)$. This implies that $i+1 \in (i,\pi(i)]$, $(\pi^{-1}(i),i]=\emptyset$, and for $1 \leq j < i$, the interval $(j,\pi(j)]$ contains $i$ if and only if it contains $i+1$, as desired.
    \item If $h_{i+1}=h_{i}$, then $w_{i}=\E$ or $w_{i}=\dE$, and $|\Het(W)|_{i+1}=|\Het(W)|_{i}$. We need to show that $|\Edif(\pi)|_{i+1}=|\Edif(\pi)|_{i}$. If $w_{i}=\E$, then $\pi^{-1}(i) \geq i \geq \pi(i)$. It indicates that $(i,\pi(i)]=(\pi^{-1}(i),i]=\emptyset$, and for $1 \leq j < i$, the interval $(j,\pi(j)]$ contains $i$ if and only if it contains $i+1$, as desired. If $w_{i}=\dE$, then $\pi^{-1}(i)<i<\pi(i)$. It follows that $i$ and $i+1$ each occurs once in the union $(i,\pi(i)]\cup (\pi^{-1}(i),i]$. Moreover, it is easy to check that for $1 \leq j < i$ and $j \neq \pi^{-1}(i)$, the interval $(j,\pi(j)]$ contains $i$ if and only if it contains $i+1$. So in both cases, we have $|\Edif(\pi)|_{i+1}=|\Edif(\pi)|_{i}$.
    \item If $h_{i+1}=h_{i}-1$, then $w_{i}=\S$ and $|\Het(W)|_{i+1}=|\Het(W)|_{i}-1$. We wish to show that $|\Edif(\pi)|_{i+1}=|\Edif(\pi)|_{i}-1$. Clearly, we have $i \in \Cpk(\pi)$, namely, $\pi^{-1}(i)<i>\pi(i)$. We observe analogously as in the previous two cases that $i\in (\pi^{-1}(i),i]$, $(i,\pi(i)]=\emptyset$, and for $1 \leq j < i$ and $j \neq \pi^{-1}(i)$, the interval $(j,\pi(j)]$ contains $i$ if and only if it contains $i+1$.
\end{itemize}

Combining all the cases above, we obtain that $\Edif(\pi)=\Het(W)$. Notice that (\ref{eq:FZSET2})  follows directly from (\ref{eq:FZSET}). The proof is now completed.
\end{proof}

By Propositions \ref{prop:FV} and \ref{prop:FZ}, we immediately get the following corollary, which can be viewed as a multiset-valued generalization of an equidistribution result for $\Phi_{\CSZ}:=\Phi_{\FZ}^{-1} \circ \Phi_{\FV}$ as given by Clarke, Steingr\'{i}msson and Zeng \cite{CSZ}. It should be noted here that the mapping $\Phi_{\CSZ}$ was originally defined via the two bijections of Fran\c con-Viennot and Foata-Zeilberger which are from $\SS_n$ to $\mathtt{L}_n^*$. Nontheless, it is easy to check that
the change of the image sets of both $\Phi_{\FV}$ and $\Phi_{\FZ}$ brings no difference in the composition $\Phi_{\FZ}^{-1} \circ \Phi_{\FV}$. In other words, $\Phi_{\CSZ}= \Phi_{\FZ}^{-1} \circ \Phi_{\FV}$ with our modified versions of $\Phi_{\FV}$ and $\Phi_{\FZ}$ indeed matches the mapping in \cite{CSZ}.

\begin{corollary}
Given $\pi \in \SS_n$, we have $\pi(n)=\Phi_{\CSZ}(\pi)(n)$, and
\begin{align*}
  (\Dt,\Db,\Ab,\rightascent, \rightdescent, \leftdescent,\Dbot,\Ddif)\:\pi&= \\[3pt]
(\Exc,\Excp,\Nexc,\Ine \sqcup \Excb \setminus &\Nexca, \Ine, \Edif \setminus \Exc \setminus \Ine,\Ebot,\Edif) \:\Phi_{\CSZ}(\pi).
\end{align*}
\end{corollary}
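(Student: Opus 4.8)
The plan is to derive the stated identity by a straightforward composition of the two previously established propositions, using the fact that $\Phi_{\CSZ}=\Phi_{\FZ}^{-1}\circ\Phi_{\FV}$ with our modified bijections, as just argued in the paragraph preceding the statement. Fix $\pi\in\SS_n$, set $W:=\Phi_{\FV}(\pi)\in\LH_n$, and let $\sigma:=\Phi_{\CSZ}(\pi)=\Phi_{\FZ}^{-1}(W)$, so that $W=\Phi_{\FZ}(\sigma)$. The claim about last entries is immediate: Proposition~\ref{prop:FV} gives $\pi(n)=\cs(W)$, and Proposition~\ref{prop:FZ} gives $\sigma(n)=\cs(W)$, whence $\pi(n)=\sigma(n)=\Phi_{\CSZ}(\pi)(n)$.

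For the multiset-valued equalities, I would read off the images of the relevant $\SS_n$-statistics under $\Phi_{\FV}$ from \eqref{eq:FVSET} and \eqref{eq:FVSETcoro}, read off the images of the relevant $\SS_n$-statistics under $\Phi_{\FZ}$ from \eqref{eq:FZSET} and \eqref{eq:FZSET2}, and then match the sr-Laguerre-history sides termwise. Concretely: from Proposition~\ref{prop:FV}, $\Dt(\pi)=\Sdeb(W)\sqcup\Sdea(W)$ (using $\Dt=\Dtb\sqcup\Dta$), $\Db(\pi)=\Ndeb(W)\sqcup\Ndea(W)$, $\Ab(\pi)=\Neb(W)\sqcup\Nea(W)$, $\Ddif(\pi)=\Het(W)$, and $\Dbot(\pi)$ is determined by $\Db(\pi)$; from Proposition~\ref{prop:FZ}, the same sr-Laguerre-history multisets equal $\Exc(\sigma)$, $\Excp(\sigma)$, $\Nexc(\sigma)$, $\Edif(\sigma)$, and (via $\Excp(\sigma)$) $\Ebot(\sigma)$ respectively. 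This yields $(\Dt,\Db,\Ab,\Ddif,\Dbot)\,\pi=(\Exc,\Excp,\Nexc,\Edif,\Ebot)\,\sigma$. For the remaining three coordinates I would compare \eqref{eq:FVSETcoro} with \eqref{eq:FZSET2}: the first components give $\rightascent(\pi)=\Wt(W)\setminus\Nea(W)\setminus\Sdea(W)=\Ine(\sigma)\sqcup\Excb(\sigma)\setminus\Nexca(\sigma)$, the second components give $\rightdescent(\pi)=\Wt(W)\setminus\Sdeb(W)\setminus\Sdea(W)=\Ine(\sigma)$, and the third components give $\leftdescent(\pi)=\Het(W)\setminus\Wt(W)=\Edif(\sigma)\setminus\Exc(\sigma)\setminus\Ine(\sigma)$.

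Assembling all eight coordinates in the order displayed in the statement then gives exactly the asserted identity, and no further work is needed beyond bookkeeping. There is essentially no genuine obstacle here, since all the analytic content has been absorbed into Propositions~\ref{prop:FV} and~\ref{prop:FZ}; the only mild care required is (a) to confirm that the passage between the set statistic ($\Db$, $\Excp$) and the multiset statistic ($\Dbot$, $\Ebot$) is faithful in both directions, which is noted in the footnotes attached to the definitions of $\Dbot$ and $\Ebot$, and (b) to make sure the sr-Laguerre-history expressions line up coordinate-for-coordinate so that the transitivity $\pi\leftrightarrow W\leftrightarrow\sigma$ is applied to identical right-hand sides. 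If one wanted to be fully explicit one could also observe that, since $\Phi_{\FV}$ and $\Phi_{\FZ}$ are both bijections onto $\LH_n$, $\Phi_{\CSZ}$ is a well-defined bijection $\SS_n\to\SS_n$, so the statement is a genuine equidistribution statement and, upon taking cardinalities, recovers the numerical CSZ result; but for the multiset-valued form the above termwise comparison is the entire proof.
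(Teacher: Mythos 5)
Your overall strategy --- routing everything through the common history $W=\Phi_{\FV}(\pi)=\Phi_{\FZ}(\sigma)$ with $\sigma=\Phi_{\CSZ}(\pi)$ and matching the right-hand sides of Propositions~\ref{prop:FV} and~\ref{prop:FZ} coordinatewise --- is exactly what the paper intends (it offers no written proof, declaring the corollary immediate). But the matching is not as mechanical as you claim, because the refined statistics appearing in \eqref{eq:FVSET} and \eqref{eq:FZSET} split at the value $\pi(n)$ (resp.\ $\sigma(n)$) and thereby silently drop that element. Concretely, $\Dbb(\pi)\sqcup\Dba(\pi)=\Db(\pi)\setminus\{\pi(n)\}$ and $\Excpb(\sigma)\sqcup\Excpa(\sigma)=\Excp(\sigma)\setminus\{\sigma(n)\}$, so your intermediate identity $\Db(\pi)=\Ndeb(W)\sqcup\Ndea(W)$ is false: for $\pi=618742593$ one has $\Db(\pi)=\{1,2,3,4,7\}$ while $\Ndeb(W)\sqcup\Ndea(W)=\{1,2,4,7\}$, the missing element being the critical step $\cs(W)=\pi(n)=3$ (the paper itself records that $\Db$ corresponds to $\Nde$, not to $\Ndeb\sqcup\Ndea$). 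The conclusion $\Db(\pi)=\Excp(\sigma)$ is still true, but it needs the extra observation that $\pi(n)\in\Db(\pi)$ iff $\sigma(n)\in\Excp(\sigma)$, both conditions being equivalent to the critical step $w_{\cs(W)}$ having type $\N$; this one-line argument is absent from your write-up and is the only place where any actual work is required.

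For the third coordinate the same omission is fatal rather than repairable by a remark: $\Abb(\pi)\sqcup\Aba(\pi)=\Ab(\pi)$ (since $\pi(n)$ is never an ascent bottom under the paper's definition, which requires $i<n$), whereas $\Nexcb(\sigma)\sqcup\Nexca(\sigma)=\Nexc(\sigma)\setminus\{\sigma(n)\}$ and $\sigma(n)\le n$ forces $\sigma(n)\in\Nexc(\sigma)$ always. Hence what Propositions~\ref{prop:FV} and~\ref{prop:FZ} actually yield is $\Ab(\pi)=\Nexc(\sigma)\setminus\{\sigma(n)\}$, and your chain $\Ab(\pi)=\Neb(W)\sqcup\Nea(W)=\Nexc(\sigma)$ breaks at the second equality. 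The paper's own running example exhibits the discrepancy: $\Ab(618742593)=\{1,2,5\}$ while $\Nexc(947612853)=\{1,2,3,5\}$. So a careful execution of your plan would have uncovered that this coordinate of the stated corollary needs the correction $\Ab(\pi)\sqcup\{\pi(n)\}=\Nexc(\Phi_{\CSZ}(\pi))$ (equivalently, one must take ascent bottoms with the boundary convention $\pi(n+1)=\infty$). The remaining coordinates --- $\Dt/\Exc$ (where $\pi(n)$ genuinely belongs to neither side), $\rightascent$, $\rightdescent$, $\leftdescent$, $\Ddif/\Edif$, and $\Dbot/\Ebot$ once $\Db=\Excp$ is secured --- do go through exactly as you describe.
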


As a parallel result of Corollary \ref{coro:desbased},
we have the following corollary.

\begin{corollary}
Let $\eta=\Phi_{\FZ}^{-1}\circ \xi \circ \Phi_{\FZ}$ and $\pi \in \SS_n$, then
\begin{align*}
  (\Exc,\Nexc,\Ine \sqcup \Excb& \setminus \Nexca ,\Ine,\Edif \setminus \Exc \setminus \Ine) \: \pi \\[4pt]
&=\kappa_{n+1}\circ(\Nexc,\Exc,\Ine,\Ine \sqcup \Excb \setminus \Nexca ,\Edif \setminus \Exc \setminus \Ine) \:\eta(\pi),\\[4pt]
&[n-1] \setminus\,  \Excp(\pi)=  \kappa_n\circ\Excp(\eta(\pi)).
\end{align*}
\end{corollary}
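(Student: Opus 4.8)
The plan is to obtain this corollary by the same mechanism used to prove Corollary~\ref{coro:desbased}, simply replacing the Fran\c con--Viennot dictionary of Proposition~\ref{prop:FV} with the Foata--Zeilberger dictionary of Proposition~\ref{prop:FZ}. Concretely, one writes $\eta=\Phi_{\FZ}^{-1}\circ\xi\circ\Phi_{\FZ}$, takes $\pi\in\SS_n$, sets $W=\Phi_{\FZ}(\pi)$, $V=\xi(W)$, and $\sigma=\eta(\pi)=\Phi_{\FZ}^{-1}(V)$, and then transports the statistic identities of Corollary~\ref{coro:xi-stats} from the $\LH_n$ side back to $\SS_n$ via Proposition~\ref{prop:FZ}.

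First I would invoke Proposition~\ref{prop:FZ} applied to $\pi$ and $W$ to rewrite the left-hand-side permutation statistics of $\pi$ as the corresponding sr-Laguerre statistics of $W$: namely $\Exc(\pi)=\Nea(W)\sqcup\Sdea(W)$ wait---more carefully, $(\Excb,\Exca,\Excpb,\Excpa,\Nexcb,\Nexca)\,\pi=(\Sdeb,\Sdea,\Ndeb,\Ndea,\Neb,\Nea)\,W$, together with $(\Ine\sqcup\Excb\setminus\Nexca,\Ine,\Edif\setminus\Exc\setminus\Ine)\,\pi=(\Wt\setminus\Nea\setminus\Sdea,\Wt\setminus\Sdeb\setminus\Sdea,\Het\setminus\Wt)\,W$, and $\Excp(\pi)$ read off from the step types before the critical step. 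Likewise, apply Proposition~\ref{prop:FZ} to $\sigma$ and $V$ to express the right-hand-side statistics of $\sigma=\eta(\pi)$ in terms of the sr-Laguerre statistics of $V$. The middle step is then purely a statement about $\xi$: Corollary~\ref{coro:xi-stats} gives $(\Neb,\Sdeb,\Nea,\Sdea,\Het,\Wt)\,W=\kappa_{n+1}\circ(\Sdea,\Nea,\Sdeb,\Neb,\Het\sqcup\Neb\setminus\Sdea,\Wt\sqcup\Neb\setminus\Sdea)\,V$, plus $[n-1]\setminus\Nde(W)=\kappa_n\circ\Nde(V)$, which after matching $\Excp$ with the appropriate set of step indices yields the last line $[n-1]\setminus\Excp(\pi)=\kappa_n\circ\Excp(\eta(\pi))$. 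Splicing these three translations together gives the claimed identities; one should double-check that the combinations $\Ine\sqcup\Excb\setminus\Nexca$ and $\Ine$ are genuinely interchanged by $\xi$ (this is exactly the swap $\Wt\setminus\Nea\setminus\Sdea\leftrightarrow\Wt\setminus\Sdeb\setminus\Sdea$ on the histories) and that $\Edif\setminus\Exc\setminus\Ine$, corresponding to $\Het\setminus\Wt$, is fixed (this is the first coordinate of Corollary~\ref{coro:num only}).

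The only genuinely delicate point—and the step I expect to be the main obstacle—is bookkeeping the $\kappa_{n+1}$ versus $\kappa_n$ shifts and the correction terms $\Neb\setminus\Sdea$ (equivalently $\Excb\setminus\Nexca$ after translation) so that the multiset arithmetic balances exactly. One has to be careful that, e.g., $\Exc(\pi)=\Nea(W)\sqcup\Sdea(W)$ maps under $\kappa_{n+1}$ to $\Neb(V)\sqcup\Sdeb(V)$, which Proposition~\ref{prop:FZ} (applied to $V$, $\sigma$) identifies with $\Nexc(\sigma)$ in the complemented indexing; and similarly that the ``$\sqcup\Excb\setminus\Nexca$'' correction appearing in $\Ine\sqcup\Excb\setminus\Nexca$ is precisely what is needed for the $\Wt$-component identity of Corollary~\ref{coro:xi-stats} to close up, since there $\Wt(W)=\kappa_{n+1}(\Wt\sqcup\Neb\setminus\Sdea(V))$ rather than simply $\kappa_{n+1}(\Wt(V))$. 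Once these index shifts are reconciled—most transparently by first passing everything to the histories, applying Corollary~\ref{coro:xi-stats} and \eqref{coro:xieq2} there, and only then translating back—the proof is a direct substitution with no further ideas required. I would therefore present it as: ``By Propositions~\ref{prop:FZ} and the identities of Corollary~\ref{coro:xi-stats} (together with \eqref{coro:xieq2}), reasoning exactly as in the proof of Corollary~\ref{coro:desbased}, we obtain the stated equalities.''
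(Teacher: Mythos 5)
Your proposal matches the paper's (implicit) argument exactly: the corollary is stated there without proof as the direct analogue of Corollary~\ref{coro:desbased}, obtained precisely by passing to histories via Proposition~\ref{prop:FZ}, applying Corollary~\ref{coro:xi-stats} together with \eqref{coro:xieq2}, and translating back. One small slip in your final paragraph: $\Exc(\pi)=\Excb(\pi)\sqcup\Exca(\pi)=\Sdeb(W)\sqcup\Sdea(W)$ (not $\Nea(W)\sqcup\Sdea(W)$), which under $\kappa_{n+1}$ becomes $\Nea(V)\sqcup\Neb(V)=\Nexca(\eta(\pi))\sqcup\Nexcb(\eta(\pi))$; with that corrected the translation closes up as you describe.
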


\subsection{Application with \texorpdfstring{$\Phi_{\YZL}$}{Yan-Zhou-Lin's bijection}}\label{subsec:YZL}

Recently, Yan, Zhou and Lin \cite{YZL} defined a bijection
 $\Psi_{\YZL}$ from $\SS_{n+1}$ to $\mathtt{L}_n$, which can be seen as
 a shifted analogue of that of Foata and Zeilberger. For $\pi \in \SS_{n+1}$ and $1\le i\le n+1$, let
 \[\nest_i(\pi)= \# \{~j \colon j <i< \pi(i) < \pi(j)~\text{or}~ \pi(j) < \pi(i) \leq i <j\},\]
 and define
 \begin{align*}
   \Scval(\pi) &= \{i \in [n] \colon  i < \pi(i) ~ \text{and}~  i+1 \leq \pi^{-1}(i+1) \}; \\
   \Scpk(\pi) & =\{i \in [n] \colon i \geq  \pi(i) ~ \text{and}~  i+1 > \pi^{-1}(i+1)\};\\
   \Scda(\pi) & =\{i \in [n] \colon i < \pi(i) ~ \text{and}~  i+1 > \pi^{-1}(i+1)\};\\
   \Scdd(\pi) & =\{i \in [n] \colon i  \geq \pi(i) ~ \text{and}~  i+1 \leq \pi^{-1}(i+1)\}.
 \end{align*}
 Then $\Psi_{\YZL}(\pi)=(w,h,c)$ can be defined as $c_i=\nest_i(\pi)$
 and
 \begin{equation}
 \label{eq:step}
w_i= \left\{
  \begin{array}{ll}
\N, & \mbox{ $ i \in \Scval(\pi)$,}\\[3pt]
\S, & \mbox{  $ i \in \Scpk(\pi)$,} \\[3pt]
\E, & \mbox{  $ i \in \Scda(\pi)$,}\\[3pt]
\dE, & \mbox{  $ i \in \Scdd(\pi)$.}
 \end{array} \right.
\end{equation}

 Recall that the images of $\Psi_{\YZL}$
 are in $\mathtt{L}_n$. In order to make the images fall in $\LH_n$, modifications on the choice vector $c$ are required. Indeed, we proceed to define a shifted and restricted version $\Phi_{\YZL}:\SS_n\to\LH_n$, which is going to be composed with our
 involution $\xi$ on sr-Laguerre histories.

Given $\pi \in \SS_n$, let $\pone(\pi)=\pi^{-1}(1)$ be the position of $1$ in $\pi$.  We construct an sr-Laguerre history $\Phi_{\YZL}(\pi):=(\widetilde{w},\widetilde{h},\widetilde{c}) \in \LH_n$ as follows.

For $i \in [n-1]$, if $i \neq \pone(\pi)$, then
\begin{equation*}
\widetilde{w}_i= \left\{
  \begin{array}{ll}
\N, & \mbox{ $ i \in \Scval(\pi)$,}\\[3pt]
\S, & \mbox{  $ i \in \Scpk(\pi)$,} \\[3pt]
\E, & \mbox{  $ i \in \Scda(\pi)$,}\\[3pt]
\dE, & \mbox{  $ i \in \Scdd(\pi)$.}
 \end{array} \right.
\end{equation*}
 If $i=\pone(\pi)$, then
 \[\widetilde{w}_i=\begin{cases}
    \N, & ~~\text{if $i+1 \leq \pi^{-1}(i+1),$} \\[3pt]
	\E, & ~~\text{if $i+1 > \pi^{-1}(i+1).$}
	\end{cases}
\]
And let
\[\widetilde{w}_n=\begin{cases}
    \S, & ~~\text{if $\pone(\pi) \neq n,$} \\[3pt]
	\E, & ~~\text{if $\pone(\pi) = n.$}
	\end{cases}
\]
The height vector $\widetilde{h}$ is then completely determined by $\widetilde{w}$ as in (2) of Definition~\ref{def:LH}. For $i \in [n]$, let $\widetilde{c}_i=\vnest_i(\pi)+\chi(w_i=\SdE)$, where $\vnest_i(\pi)$ is a variant of $\nest_i(\pi)$ defined as
\begin{equation}\label{def:vnest}
\vnest_i(\pi)=\begin{cases}
	\nest_i(\pi)-1, & \text{if $\pi(i) \leq i$ and $i < \pone(\pi)$,} \\[3pt]
	\nest_i(\pi)+1, & \text{if $\pi(i) > i$ and $i > \pone(\pi)$,} \\[3pt]
	\nest_i(\pi), & \text{otherwise}.
	\end{cases}
\end{equation}

At this point, the reader is encouraged to use the permutation $\sigma=671395482$ in Figure~\ref{fig:perdiagram} as an example and work out its image $\Phi_{\YZL}(\sigma)$, which is precisely the sr-Laguerre history depicted in Figure~\ref{fig:reslagu}.

To show that $\Phi_{\YZL}$ is well defined and indeed a bijection, we need to utilize the notion of ``permutation diagram''; see \cite{Cor} for more
information. Given
$\sigma \in \SS_n$, its permutation diagram can be obtained as follows.
Firstly, draw a line which is marked with the numbers $1,2,\ldots, n$ from left to right.
Then, draw an arc from $i$ to $\sigma(i)$ above the line
if $i<\sigma(i)$ and under the line otherwise.
Note that the definition given here is slightly different from that in \cite{Cor}, as the arcs corresponding to the fixed points are placed under the line. Take $\sigma=671395482$ for example, its permutation diagram is drawn in Figure~\ref{fig:perdiagram}.
\begin{figure}[!htbp]
\begin{center}
\begin{tikzpicture}[line width=0.8pt,scale=0.65]
\tikzstyle{every node}=[font=\small,scale=0.85]
\coordinate (O) at (0,0);

\draw[thick] (O) to ++(9,0);
\fill[black!100] (O)++(0.5,0) circle(0.5ex)++(1,0) circle(0.5ex)
++(1,0) circle(0.5ex)++(1,0) circle(0.5ex)++(1,0) circle(0.5ex)
++(1,0) circle(0.5ex)++(1,0) circle(0.5ex)++(1,0) circle(0.5ex)
++(1,0) circle(0.5ex);

\draw[thick] (O)++(0.5,0)  to [out=45,in=135] ++(5,0);
\draw[thick] (O)++(1.5,0)  to [out=45,in=135] ++(5,0);
\draw[thick] (O)++(2.5,0)  to [out=-130,in=-60] ++(-2,0);
\draw[thick] (O)++(3.5,0)  to [out=-130,in=-60] ++(-1,0);
\draw[thick] (O)++(4.5,0)  to [out=45,in=135] ++(4,0);
\draw[thick] (O)++(5.5,0)  to [out=-130,in=-60] ++(-1,0);
\draw[thick] (O)++(6.5,0)  to [out=-130,in=-60] ++(-3,0);
\draw[thick] (O)++(7.5,0)  to [out=-150,in=-150] ++(0,-0.4)
to [out=30,in=70] ++(0,0.4);
\draw[thick] (O)++(8.5,0)  to [out=-150,in=-40] ++(-7,0);

\path (O)++(0.5,0.35)  node {$1$} ++(1,0) node {$2$}
++(1,0) node {$3$}++(1,0) node {$4$}++(1,0) node {$5$}
++(1,0) node {$6$}++(1,0) node {$7$}++(1,0) node {$8$}
++(1,0) node {$9$};
\end{tikzpicture}
\caption{The permutation diagram of $\sigma=671395482$.}
\label{fig:perdiagram}
\end{center}
\end{figure}
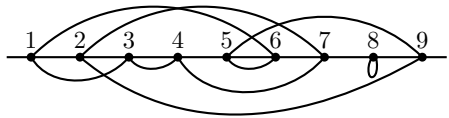

It can be easily checked that
for $i \neq \pone(\sigma)$ and $i \neq n$, $w_i$ equals $\N$, $\E$, $\dE$, and $\S$, if and only if $(i,\sigma(i))$ and $(\sigma^{-1}(i+1),i+1)$ are of the types (I), (II), (III), and (IV) given in Figure \ref{fig:fourtype}, respectively.
\begin{figure}[!htbp]
\begin{center}
\begin{tikzpicture}[line width=0.8pt,scale=0.6]
\tikzstyle{every node}=[font=\small,scale=0.8]
\coordinate (O) at (0,0);
\coordinate (O1) at (4,0);
\coordinate (O2) at (8,0);
\coordinate (O3) at (12,0);

\draw[thick] (O) to ++(2,0);
\fill[black!100] (O)++(0.5,0) circle(0.5ex)++(1,0) circle(0.5ex);
\draw[ thick] (O)++(0.5,0)  to [out=70,in=180] ++(1,0.5);
\draw[ thick] (O)++(1.5,0)  to [out=-70,in=180] ++(0.5,-0.5);

\draw[thick] (O1) to ++(2,0);
\fill[black!100] (O1)++(0.5,0) circle(0.5ex)++(1,0) circle(0.5ex);
\draw[thick] (O1)++(0.5,0)  to [out=70,in=180] ++(0.3,0.5);
\draw[thick] (O1)++(1.5,0)  to [out=100,in=0] ++(-0.3,0.5);

\draw[thick] (O2) to ++(2,0);
\fill[black!100] (O2)++(0.5,0) circle(0.5ex)++(1,0) circle(0.5ex);
\draw[thick] (O2)++(1.5,0)  to [out=-80,in=180] ++(0.5,-0.5);
\draw[thick] (O2)++(0.5,0)  to [out=-70,in=0] ++(-0.5,-0.5);

\draw[thick] (O3) to ++(2,0);
\fill[black!100] (O3)++(0.5,0) circle(0.5ex)++(1,0) circle(0.5ex);
\draw[thick] (O3)++(1.5,0)  to [out=100,in=0] ++(-1,0.5);
\draw[thick] (O3)++(0.5,0)  to [out=-70,in=0] ++(-0.5,-0.5);

\path (O)++(1,-1)  node {(I)};
\path (O1)++(1,-1)  node {(II)};
\path (O2)++(1,-1)  node {(III)};
\path (O3)++(1,-1)  node {(IV)};

\end{tikzpicture}
\caption{Types (I) to (IV) that $\N$, $\E$, $\dE$, $\S$ correspond to in a permutation diagram.}
\label{fig:fourtype}
\end{center}
\end{figure}
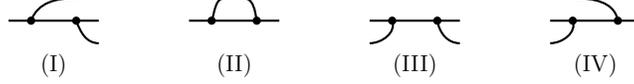

\begin{lemma}
The mapping $\Phi_{\YZL}$ is well defined. I.e., for each $\pi \in \SS_n$, suppose $W=(\widetilde{w},\widetilde{h},\widetilde{c})=\Phi_{\YZL}(\pi)$.
We have that $W \in \LH_n$.
\end{lemma}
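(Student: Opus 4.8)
The plan is to verify the three defining conditions of Definition~\ref{def:LH} for the triple $W=(\widetilde{w},\widetilde{h},\widetilde{c})$. Conditions (1) and (2) are essentially built in: $\widetilde{w}$ is a word over $\{\N,\S,\E,\dE\}$ by construction, and $\widetilde{h}$ is declared to be the height sequence of $\widetilde{w}$, so the only genuine content is (a) that $\widetilde{w}$ is actually a $2$-Motzkin path (all heights nonnegative, returning to $0$), and (b) that the weight bounds $\widetilde{h}_i\ge\widetilde{c}_i\ge\chi(\widetilde{w}_i=\SdE)$ hold for every $i\in[n]$. I would dispatch (a) first, then (b).

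For the Motzkin-path property, I would exploit the permutation-diagram description recalled just before the lemma: for $i\notin\{\pone(\pi),n\}$, the step type $\widetilde{w}_i$ is dictated by whether the pair of arcs at $i$ and at $i+1$ has shape (I)--(IV) in Figure~\ref{fig:fourtype}. The key bookkeeping fact is that $\widetilde{h}_i$ counts a natural quantity in the diagram --- namely the number of arcs ``straddling'' the gap between positions $i-1$ and $i$ (arcs that start at some $j<i$ and end at some $k\ge i$, suitably interpreted above vs. below the line, which is exactly $\nest$-type data). From the four local pictures one reads off that $\N$ at position $i$ increases this count by $1$, $\S$ decreases it by $1$, and $\E,\dE$ leave it unchanged, which is precisely the Motzkin step rule; one then checks the two exceptional positions $\pone(\pi)$ and $n$ separately using their ad hoc definitions (the arc from $\pone(\pi)$ to $1$ goes below the line; the terminal step is $\S$ unless $\pone(\pi)=n$), and confirms $\widetilde{h}_1=0$ and that the path returns to height $0$ at the end. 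I would also check here that east steps on the $x$-axis are solid, as the remark after Definition~\ref{def:LH} demands: at height $0$ there is no straddling arc to be nested in, forcing $\Scdd$ rather than $\Scda$ at such a position.

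For the weight bounds, recall $\widetilde{c}_i=\vnest_i(\pi)+\chi(\widetilde{w}_i=\SdE)$ with $\vnest_i$ given by \eqref{def:vnest}. Since $\nest_i(\pi)\ge 0$ always, the cases of \eqref{def:vnest} that keep or increase $\nest_i$ immediately give $\widetilde{c}_i\ge 0$ (and $\ge 1$ when the $\chi$-term fires, i.e. when $\widetilde{w}_i=\SdE$, which for $\Phi_{\YZL}$ happens exactly on $\Scdd$-type steps away from the special positions). The only delicate lower-bound case is the first branch of \eqref{def:vnest}, where $\vnest_i=\nest_i-1$: here $\pi(i)\le i$ and $i<\pone(\pi)$, and I would argue that under these hypotheses $\nest_i(\pi)\ge 1$ because position $\pone(\pi)$ itself (with $\pi(\pone(\pi))=1\le i<\pone(\pi)$) contributes to the second clause ``$\pi(j)<\pi(i)\le i<j$'' in the definition of $\nest_i$, so the decremented value is still $\ge 0$. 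For the \emph{upper} bound $\widetilde{c}_i\le\widetilde{h}_i$, I would compare $\vnest_i(\pi)$ with the straddling-arc count identified in the previous paragraph: $\nest_i(\pi)$ counts arcs at $i$ that are strictly nested inside a straddling arc, so $\nest_i(\pi)\le\widetilde{h}_i$ or $\widetilde{h}_i-1$ depending on step type, and the $\pm1$ corrections in \eqref{def:vnest} together with the $\chi$-term are calibrated precisely so that $\widetilde{c}_i$ lands in $[\chi(\widetilde{w}_i=\SdE),\widetilde{h}_i]$; this is a finite case check over the branches of \eqref{def:vnest} crossed with the five position types ($i<\pone$, $i=\pone$, $i>\pone$, and $i=n$, and whether $\pone(\pi)=n$).

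The main obstacle I anticipate is not any single inequality but the careful alignment of three separate conventions: the $\nest$/$\vnest$ definitions, the diagram-based straddling-arc interpretation of $\widetilde{h}_i$, and the shift-by-one in the weight bounds for $\SdE$ steps versus $\NE$ steps. In particular, pinning down exactly which arc-endpoint at a special position ($\pone(\pi)$ or $n$) does or does not get counted in $\nest_i$ --- and making sure the ad hoc step choices there produce a consistent height sequence --- is the place where an off-by-one error would most easily creep in, so that is where I would spend the most care, ideally cross-checking against the worked example $\sigma=671395482\mapsto$ Figure~\ref{fig:reslagu}.
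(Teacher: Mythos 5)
Your proposal is correct in substance but takes a genuinely different route from the paper. The paper does not argue directly on the diagram of $\pi$: it introduces the auxiliary permutation $\bar{\pi}\in\SS_{n+1}$ obtained by inserting $n+1$ immediately to the left of $1$ in the cycle notation of $\pi$, observes that $\Psi_{\YZL}(\bar{\pi})=(w,h,c)$ has $w=\widetilde{w}$ and $h=\widetilde{h}$, and then invokes the already-known fact that $\Psi_{\YZL}(\bar{\pi})\in\mathtt{L}_n$ to get the $2$-Motzkin property and the bound $0\le c_i\le h_i$ for free. What remains is only to compare $\nest_i(\pi)$ with $\nest_i(\bar{\pi})=c_i$ in each branch (a nesting over $i$ is created or preserved when passing to $\bar{\pi}$ exactly according to the sign of the correction in \eqref{def:vnest}), which is precisely what the $\pm1$ there is calibrated to absorb. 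Your plan instead re-derives everything from the permutation diagram: the interpretation of $\widetilde{h}_i$ as a straddling-arc count (upper arcs $j<i<\pi(j)$, plus one virtual arc for $i>\pone(\pi)$) is correct and does yield both the Motzkin property and the upper bound $\nest_i(\pi)\le\widetilde{h}_i-\chi(\cdot)$, and your lower-bound argument — that for $\pi(i)\le i<\pone(\pi)$ the pair $(\pone(\pi),i)$ contributes a nesting since $1<\pi(i)\le i<\pone(\pi)$ — is literally the paper's own observation. The trade-off: your route is self-contained but must in effect re-prove the well-definedness of $\Psi_{\YZL}$, with the attendant case analysis you rightly flag as error-prone; the paper's route outsources that to \cite{YZL} and reduces the new content to a four-case nesting comparison. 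Two small cautions: your parenthetical about height-$0$ east steps has the correspondence backwards ($\Scdd$ gives $\dE$ and $\Scda$ gives $\E$, so it is $\Scdd$ that must be excluded at height $0$ — and this exclusion is anyway subsumed by the weight bounds rather than needing a separate check), and your upper-bound paragraph is still only a program; the "finite case check" you defer is exactly the bulk of the work in this approach.
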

\begin{proof}
Firstly, we show that for $1\le j\le n$,
\begin{align}
\label{eq:NS}
\#\{i: i < j,\:\widetilde{w}_i=\N\} &\geq \#\{i: i < j,\:\widetilde{w}_i=\S\},\text{ and}\\[3pt]
\#\{i: 1\le i\le n,\:\widetilde{w}_i=\N\} &= \#\{i: 1\le i\le n,\:\widetilde{w}_i=\S\}.
\label{eq:N=S}
\end{align}
Let $\bar{\pi}$ be a permutation of $[n+1]$ that is derived from $\pi$ by letting
\[
\bar{\pi}(i)=\begin{cases}
	n+1, & \text{if $i=\pone(\pi)$,} \\[3pt]
	1, & \text{if $i=n+1$,} \\[3pt]
	\pi(i), & \text{otherwise}.
	\end{cases}
\]
In other words, we insert $n+1$ to the immediate left of $1$ in the cycle notation of $\pi$ to get the cycle notation of $\bar{\pi}$. Suppose $\Psi_{\YZL}(\bar{\pi})=(w,h,c)$, then it can be routinely checked that $w=\widetilde{w}$ and thus $h=\widetilde{h}$. Notice that $\Psi_{\YZL}(\bar{\pi}) \in \mathtt{L}_n$, which implies \eqref{eq:NS} and \eqref{eq:N=S}, as well as that $0\le c_i\le h_i$, for $1\le i\le n$.

It remains to show that for every $1\le i\le n$, we have
\begin{align*}
\widetilde{h}_i\ge \widetilde{c}_i\ge \begin{cases}
  0, & \text{if } \widetilde{w}_i=\NE,\\
  1, & \text{if } \widetilde{w}_i=\SdE.
\end{cases}
\end{align*}
We confirm it according to the following cases.
\begin{itemize}
  \item  If $i < \pone(\pi)$ and $\pi(i) \leq i$, then $\widetilde{w}_i=\SdE$. Hence $\widetilde{c}_i=\vnest_i(\pi)+1=\nest_i(\pi)$ according to \eqref{def:vnest}. Now since $1 < \pi(i) \le i < \pone(\pi)$, we see that $\nest_i(\pi) \ge 1$. On the other hand, going from $\pi$ to $\bar{\pi}$, we see the nesting $1 < \pi(i) \le i < \pone(\pi)$ is replaced by the nesting $1 < \bar{\pi}(i) \le i < n+1$ while other nestings are preserved. Hence we have $\nest_i(\pi)=\nest_i(\bar{\pi})=c_i\le h_i=\widetilde{h}_i$, as desired.

  \item  If $\pone(\pi)<i<n$ and $\pi(i) \leq i$, then $\widetilde{w}_i=\SdE$. Hence $\widetilde{c}_i=\vnest_i(\pi)+1=\nest_i(\pi)+1$ according to \eqref{def:vnest}. This means that we need to show $1 \le \nest_i(\pi)+1 \le \widetilde{h}_i$. The first inequality is obvious. For the second one, we observe again that going from $\pi$ to $\bar{\pi}$, a new nesting $1 < \bar{\pi}(i) \le i < n+1$ is created while other nestings remain nested. Therefore $\nest_i(\pi)+1=\nest_i(\bar{\pi})=c_i\le h_i=\widetilde{h}_i$, as desired.

  \item  If $i < \pone(\pi)$ and $\pi(i) > i$, then $\widetilde{w}_i=\NE$. Hence $\widetilde{c}_i=\vnest_i(\pi)=\nest_i(\pi)$ according to \eqref{def:vnest}. Similar discussion with $\pi$ and $\bar{\pi}$ gives us $\nest_i(\pi)=\nest_i(\bar{\pi})$, thus $0\le \widetilde{c}_i=\nest_i(\bar{\pi})\le h_i=\widetilde{h}_i$, as desired.

  \item  If $\pone(\pi)<i<n$ and $\pi(i) > i$, then $\widetilde{w}_i=\NE$. Hence $\widetilde{c}_i=\vnest_i(\pi)=\nest_i(\pi)+1$ according to \eqref{def:vnest}. Now, the new nesting $\pone(\pi)<i<\bar{\pi}(i)<n+1$ indicates that $\nest_i(\bar{\pi})=\nest_i(\pi)+1$ and the desired inequalities follow.

  \item If $i=\pone(\pi) < n$, then $\nest_i(\pi)=0$ and $\widetilde{w}_i=\NE$. Hence $\widetilde{c}_i=\vnest_i(\pi)=\nest_i(\pi)=0$, which certainly agrees with $0 \le \widetilde{c}_i \le \widetilde{h}_i$.

  \item If $i=n$, then $\nest_i(\pi)=0$. We further consider two subcases. If $n \neq \pone(\pi)$, then $\widetilde{w}_i=\S$. It follows that $\widetilde{c}_i=\vnest_i(\pi)+1=\nest_i(\pi)+1=1$, which agrees with $1\le \widetilde{c}_i\le \widetilde{h}_i$. If $n=\pone(\pi)$, then $\widetilde{w}_i=\E$. It follows that $\widetilde{c}_i=\vnest_i(\pi)=\nest_i(\pi)=0$, which agrees with $0 \leq \widetilde{c}_i \le \widetilde{h}_i$.
\end{itemize}
\end{proof}

\begin{lemma}\label{lem:cspone}
Given $\pi \in \SS_n$ and $W=(w,h,c)=\Phi_{\YZL}(\pi)$,
we have $\pone(\pi)=\cs(W)$.
\end{lemma}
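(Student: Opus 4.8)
The plan is to establish $\cs(W)=\pone(\pi)$ directly from the definition $\cs(W)=\max\{i\in[n]:\widetilde{c}_i=0\}$, by checking the two properties that pin down the critical step: (a) $\widetilde{c}_{\pone(\pi)}=0$, and (b) $\widetilde{c}_j\ge 1$ for every index $j$ with $\pone(\pi)<j\le n$. Both are read off from the formula $\widetilde{c}_i=\vnest_i(\pi)+\chi(\widetilde{w}_i=\SdE)$, and most of the relevant case bookkeeping has already been carried out in the proof that $\Phi_{\YZL}$ is well defined; I would cite that analysis rather than repeat it.

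For (a): first observe that $\pi(\pone(\pi))=1$ forces $\nest_{\pone(\pi)}(\pi)=0$, since $\pi(i)=1\le i$ kills the first alternative in the definition of $\nest_i$, and $\pi(j)<1$ is impossible, which kills the second. Next, by construction $\widetilde{w}_{\pone(\pi)}\in\{\N,\E\}$ (whether or not $\pone(\pi)=n$), so it is of type $\NE$ and the indicator term vanishes; moreover $i=\pone(\pi)$ falls in the ``otherwise'' branch of \eqref{def:vnest}. Hence $\widetilde{c}_{\pone(\pi)}=\vnest_{\pone(\pi)}(\pi)=\nest_{\pone(\pi)}(\pi)=0$.

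For (b): fix $j$ with $\pone(\pi)<j\le n$ and split into three cases. If $j=n$, then $\pone(\pi)\neq n$ gives $\widetilde{w}_n=\S$, so $\widetilde{c}_n=\vnest_n(\pi)+1=\nest_n(\pi)+1\ge 1$. If $\pone(\pi)<j<n$ and $\pi(j)\le j$, then $j\in\Scpk\cup\Scdd$, so $\widetilde{w}_j=\SdE$, while $j$ again lies in the ``otherwise'' branch of \eqref{def:vnest}; hence $\widetilde{c}_j=\nest_j(\pi)+1\ge 1$. If $\pone(\pi)<j<n$ and $\pi(j)>j$, then $\widetilde{w}_j=\NE$, but now the second branch of \eqref{def:vnest} is active, so $\widetilde{c}_j=\vnest_j(\pi)=\nest_j(\pi)+1\ge 1$. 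Thus $\widetilde{c}_j\ge 1>0$ in every case. Combining (a) and (b) with the definition of $\cs(W)$ yields $\cs(W)=\pone(\pi)$.

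I do not expect a genuine obstacle: the argument is essentially mechanical. The only points that require a little care are the edge case $\pone(\pi)=n$ and, in each subcase, keeping straight which branch of the $\vnest$ definition and which value of $\chi(\widetilde{w}_i=\SdE)$ is in force — exactly the accounting already dispatched in the well-definedness lemma, so those details can simply be referenced.
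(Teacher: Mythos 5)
Your proposal is correct and follows essentially the same route as the paper: verify $\widetilde{c}_{\pone(\pi)}=0$ and $\widetilde{c}_j\ge 1$ for all $j>\pone(\pi)$ directly from $\widetilde{c}_i=\vnest_i(\pi)+\chi(\widetilde{w}_i=\SdE)$ and the branches of \eqref{def:vnest}. The only cosmetic difference is that in part (b) the paper splits on the step type $\widetilde{w}_j$ (using the Figure~\ref{fig:fourtype} correspondence to get $j<\pi(j)$ when $\widetilde{w}_j=\NE$), whereas you split on whether $\pi(j)\le j$ and read the step type off the definition — the same computation in a different order.
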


\begin{proof}
Suppose $\pone(\pi)=i$. By the definition of the critical step, it suffices to show that $c_i=0$ and $c_j\ge 1$ for all $j>i$. Now $w_i=\NE$ so $c_i=\vnest_i(\pi)=\nest_i(\pi)$ according to \eqref{def:vnest}, and $\nest_i(\pi)=0$ since $\pi(i)=1$. If we have in addition that $i=n$, then we are done. Otherwise we can assume $i<n$. In particular, this implies that $w_n=\S$.

Next, take any $j>i$, we consider two cases. If $w_j=\SdE$, then $c_j\ge 1$ holds. If $w_j=\NE$, then we must have $j<n$ since $w_n=\S$. Now we can rely on the correspondence in Figure~\ref{fig:fourtype} to deduce that $j < \pi(j)$. It follows that $c_j=\vnest_j(\pi)=\nest_j(\pi)+1 \ge 1$ as well.
\end{proof}

Now, we introduce in Figure \ref{fig:fourtype2} four types of semi-arcs in a permutation diagram. Among them, arcs of types ``AR'' and ``BL'' are called ``outward'', while those of types ``AL'' and ``BR'' are called ``inward''.

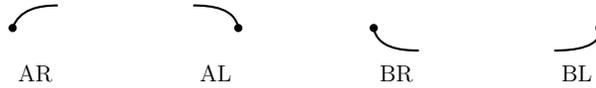
\begin{figure}[!htbp]
\begin{center}
\begin{tikzpicture}[line width=0.8pt,scale=0.6]
\tikzstyle{every node}=[font=\small,scale=0.8]
\coordinate (O) at (0,0);
\coordinate (O1) at (4,0);
\coordinate (O2) at (8,0);
\coordinate (O3) at (12,0);

\fill[black!100] (O)++(0.5,0) circle(0.5ex);
\draw[ thick] (O)++(0.5,0)  to [out=70,in=180] ++(1,0.5);

\fill[black!100] (O1)++(1.5,0) circle(0.5ex);
\draw[thick] (O1)++(1.5,0)  to [out=100,in=0] ++(-1,0.5);

\fill[black!100] (O2)++(0.5,0) circle(0.5ex);
\draw[thick] (O2)++(0.5,0)  to [out=-80,in=180] ++(1,-0.5);

\fill[black!100] (O3)++(1.5,0) circle(0.5ex);
\draw[thick] (O3)++(1.5,0)  to [out=-80,in=0] ++(-1,-0.5);

\path (O)++(1,-1)  node {AR};
\path (O1)++(1,-1)  node {AL};
\path (O2)++(1,-1)  node {BR};
\path (O3)++(1,-1)  node {BL};

\end{tikzpicture}
\caption{Four types of semi-arcs in a permutation diagram.}
\label{fig:fourtype2}
\end{center}
\end{figure}

\begin{theorem}
For every $n\ge 1$, the mapping $\Phi_{\YZL}:\SS_n\to \LH_n$ is a bijection.
\end{theorem}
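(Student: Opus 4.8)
The strategy is the standard one for this kind of statement: since $|\SS_n| = |\LH_n| = n!$, it suffices to exhibit an explicit two-sided inverse $\Theta : \LH_n \to \SS_n$, or equivalently to show that $\Phi_{\YZL}$ is injective (surjectivity then being automatic by cardinality). I would proceed by building the inverse directly from the permutation-diagram picture, using the correspondences in Figures~\ref{fig:fourtype} and~\ref{fig:fourtype2}, and then checking that this construction and $\Phi_{\YZL}$ undo each other. The reference point throughout is the permutation $\bar\pi \in \SS_{n+1}$ introduced in the well-definedness lemma: we already know $\Psi_{\YZL}$ is a bijection from $\SS_{n+1}$ to $\mathtt{L}_n$, that $w = \widetilde w$ under this correspondence, and (from the proof of that lemma) exactly how $\nest_i(\pi)$ and $\nest_i(\bar\pi) = c_i$ differ in each of the step-type/position cases. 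So the cleanest route is to reduce $\Phi_{\YZL}^{-1}$ to $\Psi_{\YZL}^{-1}$.

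\textbf{Key steps.}
First I would reconstruct $\widetilde w$ from $W$ — it is literally the $2$-Motzkin path recorded in $W$ — and read off $\cs(W) = m$; by Lemma~\ref{lem:cspone} this must equal $\pone(\pi)$, which pins down where $1$ sits in the target permutation. Second, I would recover the choice vector $c$ of the \emph{unshifted} Laguerre history $\Psi_{\YZL}(\bar\pi)$ from $\widetilde c$: for each $i$, $c_i = \widetilde c_i - \chi(\widetilde w_i = \SdE)$ adjusted by the $\vnest$-vs-$\nest$ correction, i.e. $c_i = \nest_i(\bar\pi)$ is obtained by inverting the piecewise formula \eqref{def:vnest} together with the nesting bookkeeping from the well-definedness proof (the new nesting created/destroyed by inserting $n+1$ just to the left of $1$). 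Third, apply $\Psi_{\YZL}^{-1}$ (which exists by Yan--Zhou--Lin) to $(w, h, c) \in \mathtt{L}_n$ to obtain $\bar\pi \in \SS_{n+1}$, and finally delete $n+1$ from the cycle notation of $\bar\pi$ — equivalently set $\pi(\pone(\pi)) = 1$ and $\pi(\bar\pi^{-1}(n+1)) = \bar\pi(n+1)$ otherwise — to recover $\pi \in \SS_n$. The verification that $\Theta \circ \Phi_{\YZL} = \mathrm{id}$ then splits into exactly the same six cases as the well-definedness lemma (the two cases $i \lessgtr \pone(\pi)$ crossed with $\pi(i) \lessgtr i$, plus $i = \pone(\pi)$ and $i = n$), in each of which the $\vnest$/$\nest$ discrepancy was already computed; and $\Phi_{\YZL} \circ \Theta = \mathrm{id}$ follows by running the same identifications in reverse, or simply by a counting argument once injectivity of $\Phi_{\YZL}$ is in hand.

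\textbf{Main obstacle.}
The genuinely delicate point is the well-definedness of the inverse in the other direction — namely, that for an arbitrary $W \in \LH_n$ the recipe above produces a permutation of $[n]$ and not something inconsistent: one must be sure that the $c$-vector reconstructed by inverting \eqref{def:vnest} really lies in the valid range $0 \le c_i \le h_i$ for $\Psi_{\YZL}^{-1}$, and that deleting $n+1$ from $\bar\pi$ is compatible with $\pone$ being at position $m = \cs(W)$ and with $w_n$ being forced ($\S$ when $m \neq n$, $\E$ when $m = n$, which matches the constraint that east steps on the $x$-axis are solid). Concretely, the case $\pone(\pi) < i < n$ with $\widetilde w_i = \SdE$, where $\widetilde c_i = \nest_i(\pi) + 1$ can be as small as $1$, is where the shift is tightest and must be checked against $\widetilde h_i \ge 1$; the permutation-diagram description in Figures~\ref{fig:fourtype}--\ref{fig:fourtype2}, reading the outward/inward semi-arc at each vertex, is the tool that makes this transparent. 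Everything else is bookkeeping that parallels the already-proven well-definedness lemma.
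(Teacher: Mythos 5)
Your proposal is correct, but it takes a genuinely different route from the paper. The paper proves bijectivity by building $\Phi_{\YZL}^{-1}$ from scratch inside $\SS_n$: starting from $W=(w,h,c)\in\LH_n$ it assigns to each node of a permutation diagram one outward and one inward semi-arc (with special rules at $\cs(W)$, at $1$, and at $n$), recovers $\nest_i(\pi)$ from $c_i$, and then matches the AR-arcs with the AL-arcs and the BL-arcs with the BR-arcs according to these nesting numbers; the substantive check is that the numbers of outward and inward semi-arcs of each kind balance. You instead reduce everything to the cited bijectivity of $\Psi_{\YZL}:\SS_{n+1}\to\mathtt{L}_n$ via the lift $\pi\mapsto\bar\pi$, and this works: the case analysis already carried out in the well-definedness lemma shows in every case that $\widetilde{c}_i=\nest_i(\bar\pi)$, i.e.\ the shift by $\chi(\widetilde{w}_i=\SdE)$ exactly cancels the $\vnest$-versus-$\nest$ discrepancy, so that $\Phi_{\YZL}(\pi)=\Psi_{\YZL}(\bar\pi)$ on the nose; injectivity of $\Phi_{\YZL}$ is then inherited from $\Psi_{\YZL}$ and from $\pi\mapsto\bar\pi$, and the count $|\SS_n|=|\LH_n|=n!$ finishes the proof. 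Two remarks. First, the ``main obstacle'' you flag largely dissolves: since $\LH_n\subseteq\mathtt{L}_n$ as pairs $(w,c)$ and the reconstruction of $c$ from $\widetilde{c}$ turns out to be the identity, no range-checking is needed, and the left-inverse-plus-cardinality argument never requires your $\Theta$ to be globally consistent on all of $\LH_n$ anyway. Second, the trade-off: your argument is shorter and exposes the clean structural identity $\Phi_{\YZL}=\Psi_{\YZL}\circ(\pi\mapsto\bar\pi)$ (which sits well next to the factorization $\Phi_{\YZL}=\Phi_{\FZ}\circ\varkappa$ proved later), but it outsources the combinatorial content to the Yan--Zhou--Lin paper; the paper's proof is self-contained and, more importantly, produces an explicit algorithm for computing $\Phi_{\YZL}^{-1}(W)$ directly from the diagram, which is what the later applications actually use.
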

\begin{proof}
We wish to prove this theorem by presenting the inverse of $\Phi_{\YZL}$. Given $W=(w,h,c) \in \LH_n$, it suffices to construct the permutation diagram of its corresponding permutation $\pi$. Our process begins with a line decorated with nodes labeled as $1,2,\ldots,n$ from left to right. Next, we take the following steps to assign for each node one outward semi-arc and one inward semi-arc. Set $k=\cs(W)$.
\begin{itemize}
  \item Assign a BL-arc for both $k$ and $n$, and a BR-arc for $1$.
  \item By the definition of the critical step, $w_{k}=\NE$. If $w_{k}=\N$, draw a BR-arc for $k+1$; if $w_k=\E$, draw an AL-arc for $k+1$.
  \item For $i\neq k$ from $1$ to $n-1$, draw the types (I)--(IV) of semi-arcs as given in Figure \ref{fig:fourtype} corresponding to $w_i=\N$, $\E$, $\dE$, and $\S$, respectively. This way, for the pair of nodes $(i,i+1)$, $i$ receives an outward semi-arc while $i+1$ receives an inward semi-arc.
  \item Use the weight $c_i$ and \eqref{def:vnest} to recover $\nest_i(\pi)$. Namely,
  \[\nest_i(\pi)=c_i-\chi(w_i=\SdE)+\chi(i \in \Sdeb)-\chi(i \in \Nea).\]
  \item Suppose that the corresponding labels of all the nodes with AR-arcs from left to right are $i_1,i_2,\ldots,i_s$, respectively. Link the AR-arc of $i_s$ to the $(\nest_{i_s}(\pi)+1)$-th AL-arc (counting from right to left). Link the AR-arc of $i_{s-1}$ to the $(\nest_{i_{s-1}}(\pi)+1)$-th AL-arc (counting form right to left within the remaining AL-arcs). Repeating like this until all the AR-arcs have been connected with the AL-arcs.
  \item Suppose that the corresponding labels of all the nodes with BL-arcs from left to right are $j_1,j_2,\ldots, j_t$, respectively. Link the BL-arc of $j_1$ to the $(\nest_{j_1}(\pi)+1)$-th BR-arc (counting from left to right). Link the BL-arc of $j_2$ to the $(\nest_{j_2}(\pi)+1)$-th BR-arc (counting from left to right within the remaining  BR-arcs). Repeating like this until all the BL-arcs have been connected with the BR-arcs.
\end{itemize}

In summary, the first three steps of our construction make sure each node is associated with one outward semi-arc and one inward semi-arc. The remaining steps tell us how these semi-arcs are connected so as to form a complete permutation diagram. Then, the desired permutation $\pi$ follows immediately from reading the permutation diagram. We should mention that strictly speaking, for the last two steps, some justifications are required to guarantee that the number of AR-arcs (resp.~BL-arcs) matches that of AL-arcs (resp.~BR-arcs). We verify this for one particular case and trust the reader with the other cases. Suppose $\cs(W)=k<n$ and $w_k=\N$, so $w_n=\S$ and $k+1$ receives a BR-arc according to step 2. Since $w$ represents a $2$-Motzkin path, the number of $\N$'s equals the number of $\S$'s in the remaining letters $w_i$, $i\neq k,i\neq n$. Therefore we have
\begin{align*}
&\phantom{=} \#\{i\in [n]: \text{$i$ is assigned an AR-arc}\}-\#\{i\in [n]: \text{$i$ is assigned an AL-arc}\}\\
&=\#\{i\in[n-1]\setminus\{k\}:\text{$(i,i+1)$ is of type (I) or (II)}\}\\
&\qquad\qquad -\#\{i\in[n-1]\setminus\{k\}:\text{$(i,i+1)$ is of type (II) or (IV)}\}\\
&=\#\{i\in[n-1]\setminus\{k\}:\text{$(i,i+1)$ is of type (I)}\}-\#\{i\in[n-1]\setminus\{k\}:\text{$(i,i+1)$ is of type (IV)}\}\\
&=\#\{i\in[n-1]\setminus\{k\}:w_i=\N\}-\#\{i\in[n-1]\setminus\{k\}:w_i=\S\}=0,
\end{align*}
as claimed.

Finally, it can be checked that indeed $\Phi_{\YZL}(\pi)=W$. We omit the details and the proof is now completed.
\end{proof}

Just like linear permutation statistics are transformed by $\Phi_{\FV}$ in section~\ref{subsec:FV}, and cyclic statistics are transformed by $\Phi_{\FZ}$ in section~\ref{subsec:FZ}, our new variant mapping $\Phi_{\YZL}$ deserves its own permutation statistics. This family of permutation statistics can be thought of as ``shifted cyclic statistics'' and we introduce them now.

For $\pi \in \SS_n$,  let
\begin{align*}
  \Nexcp(\pi)=\{i \in [n] : \pi(i) \leq i\}, &\,\,\,\, \Vnex(\pi)=\{ i \in [n-1]: i+1 \notin \Exc(\pi)\},\\[3pt]
  \Vnexpb(\pi)=\{i : i <\pone(\pi), i \in \Nexcp(\pi)\}, &\,\,\,\,
  \Vnexpa(\pi)=\{i : i >\pone(\pi), i \in \Nexcp(\pi)\}, \\[3pt]
  \Vnexb(\pi)=\{i : i <\pone(\pi), i \in \Vnex(\pi)\}, & \,\,\,\,\Vnexa(\pi)=\{i : i >\pone(\pi), i \in \Vnex(\pi)\}, \\[3pt]
  \Vexpb(\pi)=\{i : i <\pone(\pi), i \in \Excp(\pi)\}, &\,\,\,\, \Vexpa(\pi)=\{i : i >\pone(\pi), i \in \Excp(\pi)\}.
\end{align*}

Further, we define the multiset-valued statistics
{\em shifted excedance difference}, {\em shifted excedance position sum}, and {\em shifted nesting} as
\begin{align*}
\Vedif(\pi) &=\left(\bigcup_{i \in \Excp(\pi)}\{i+1,i+2, \ldots, \pi(i)-1\} \right) \bigcup \{\pone(\pi)+1,\ldots,n\},\\[3pt]
\Vbot(\pi) &=\bigcup_{i \in \Vnex(\pi)}\{i^{i}\},\,\,\,\,\,\,\,\,\text{and }
\Vnest(\pi)=\bigcup_{i=1}^n \,\,\{i^{\vnest_i(\pi)}\}.
\end{align*}


\begin{example}
For $\sigma=671395482$, we have
$\Excp(\sigma)=\{1,2,5\}$,
$\Nexcp(\sigma)=\{3,4,6,7,8,9\}$,
$\Vnex(\sigma)=\{1,2,3,4,7\}$,
$\Vnexpb(\sigma)=\emptyset$, $\Vnexpa(\sigma)=\{4,6,7,8,9\}$,
$\Vexpb(\sigma)=\{1,2\}$, $\Vexpa(\sigma)=\{5\}$,
$\Vnexb(\sigma)=\{1,2\}$,
$\Vnexa(\sigma)=\{4,7\}$,
$\Vedif(\sigma)=\{2,3^2,4^3,5^3,6^3,7^2,8^2,9\}$,
$\Vbot(\sigma)=\{1,2^2,3^3,4^4,7^7\}$,
$\Vnest(\sigma)=\{4,5,6^2,7,8\}$.
\end{example}

\begin{proposition}\label{prop:YZL}
The bijection $\Phi_{\YZL}:\SS_n\rightarrow \LH_n$  links  permutation statistics with their counterparts over sr-Laguerre histories as follows. For any $\pi\in\SS_n$ and $W=(w,h,c)=\Phi_{\YZL}(\pi)$,
 we have
\begin{align}
\label{eq:YZLSET}
(\Vnexpb,\Vnexpa,\Vnexb,\Vnexa,\Vexpb,\Vexpa,\Vedif,\Vnexpb\sqcup\Vnexpa\sqcup\Vnest)\:\pi \\[3pt]
= (\Sdeb,\Sdea,\Ndeb,\Ndea,\Neb,\Nea, \Het,\Wt)\:W,  \nonumber
\end{align}
and
\begin{align}
&(\Vnest\sqcup \Vnexpb \setminus \Vexpa, \Vnest,
\Vedif\setminus\Vnexpb\setminus\Vnexpa\setminus\Vnest)\:\pi \\
&= ( \Wt\setminus\Nea\setminus\Sdea,
\Wt\setminus\Sdeb\setminus\Sdea,\Het\setminus\Wt)\:W. \nonumber
\end{align}
\end{proposition}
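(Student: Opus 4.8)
The strategy mirrors the proofs of Propositions~\ref{prop:FV} and~\ref{prop:FZ}: establish the first (long) equality~\eqref{eq:YZLSET} by inspecting the construction of $\Phi_{\YZL}$ step by step, and then deduce the second equality as a purely formal consequence. The bulk of the work is to match each of the eight coordinates of~\eqref{eq:YZLSET}. For the first six --- $\Vnexpb,\Vnexpa,\Vnexb,\Vnexa,\Vexpb,\Vexpa$ versus $\Sdeb,\Sdea,\Ndeb,\Ndea,\Neb,\Nea$ --- I would argue as follows. By Lemma~\ref{lem:cspone} we have $\pone(\pi)=\cs(W)$, so ``before/after $\pone(\pi)$'' on the permutation side translates exactly to ``before/after the critical step'' on the history side. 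It therefore suffices to check, for each index $i\in[n-1]$ with $i\neq\pone(\pi)$, that $i$ is an excedance position of $\pi$ iff $\widetilde w_i=\NE$, and that $i\in\Vnex(\pi)$ (i.e.\ $i+1\notin\Exc(\pi)$) iff $\widetilde w_i=\NdE$. Both of these are immediate from the definition of $\widetilde w_i$ via $\Scval,\Scpk,\Scda,\Scdd$ together with the type dictionary of Figure~\ref{fig:fourtype}: the types (I),(III) (namely $\N,\dE$) are precisely those in which $i<\pi(i)$, and types (I),(IV) (namely $\N,\S$) are precisely those in which $i+1$ has an incoming arc from below, i.e.\ $i+1\notin\Exc(\pi)$. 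The boundary indices $\pone(\pi)$ and $n$ are handled separately using the special rules in the definition of $\Phi_{\YZL}$; note $\pone(\pi)\in\Excp$ corresponds to $\widetilde w_{\pone(\pi)}=\N$ and $\pone(\pi)\notin\Excp$ to $\widetilde w_{\pone(\pi)}=\E$, which is consistent with the convention placing the fixed-point arc of $1$ below the line.

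For the seventh coordinate, $\Vedif(\pi)=\Het(W)$, I would give an inductive proof on the value $i$ running from $1$ to $n$, exactly parallel to the argument for $\Edif(\pi)=\Het(W)$ in Proposition~\ref{prop:FZ}. The key point is that $\widetilde h_{i+1}-\widetilde h_i\in\{+1,0,-1\}$ according to $\widetilde w_i\in\{\N,\{\E,\dE\},\S\}$, and that the multiplicity $|\Vedif(\pi)|_{i+1}-|\Vedif(\pi)|_i$ is governed by: (a) whether $i$ itself lies in some half-open interval $(j,\pi(j)-1]$ or the tail $[\pone(\pi)+1,n]$ contributing to $\Vedif$, and (b) whether a new such interval opens or an old one closes at $i$. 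These two are forced by the same case analysis ($i\in\Scval/\Scpk/\Scda/\Scdd$, plus the boundary cases $i=\pone(\pi)$ and $i=n$) that determines $\widetilde w_i$. The one genuinely new wrinkle compared to the $\Edif$ case is the presence of the explicit tail $\{\pone(\pi)+1,\dots,n\}$ in $\Vedif$ and the ``$\pi(i)-1$'' (rather than ``$\pi(i)$'') upper limit in the excedance intervals; tracking where these boundary contributions land is exactly compensated by the special treatment of the step at position $\pone(\pi)$ (which is an $\N$ or $\E$ of shifted height) and the final $\S$ step at position $n$.

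For the eighth coordinate, $\Vnexpb\sqcup\Vnexpa\sqcup\Vnest=\Wt(W)$, I would read off the definition $\widetilde c_i=\vnest_i(\pi)+\chi(\widetilde w_i=\SdE)$ and observe that $\chi(\widetilde w_i=\SdE)$ contributes, as $i$ ranges over $[n]$, exactly one copy of each $i$ with $\widetilde w_i=\SdE$, i.e.\ exactly the multiset $\Sdeb(W)\sqcup\Sdea(W)$ as plain sets; but the index set $\{i:\widetilde w_i=\SdE\}$ is precisely $\Nexcp(\pi)$ up to the boundary corrections at $\pone(\pi)$ and $n$, and a short bookkeeping check (using that $\widetilde w_n=\S$ when $n\neq\pone(\pi)$, contributing the single element $n$, which matches $n\in\Vnexpa$ since $\pi(n)\le n$) shows this $\chi$-part equals $\Vnexpb(\pi)\sqcup\Vnexpa(\pi)$. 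The $\vnest_i(\pi)$ part contributes $\bigcup_i\{i^{\vnest_i(\pi)}\}=\Vnest(\pi)$ by definition. Combining gives the claim. Finally, equality~\eqref{eq:FZSET2}'s analogue here --- the three identities for $\Wt\setminus\Nea\setminus\Sdea$, $\Wt\setminus\Sdeb\setminus\Sdea$, and $\Het\setminus\Wt$ --- follows from~\eqref{eq:YZLSET} by the same elementary multiset manipulations used in Proposition~\ref{prop:FZ}: subtracting $\Nea\sqcup\Sdea=\Vnexpa\sqcup(\text{the }\Scda/\Scdd\text{-after part})$ from $\Wt$ and simplifying, and noting $\Het\setminus\Wt=\Vedif\setminus(\Vnexpb\sqcup\Vnexpa\sqcup\Vnest)$.

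\textbf{Main obstacle.} The delicate part is unquestionably the induction establishing $\Vedif(\pi)=\Het(W)$, specifically keeping the boundary contributions honest. Unlike the Foata--Zeilberger setting, here the excedance position $\pone(\pi)$ plays a distinguished role: the step at position $\pone(\pi)$ is declared $\N$ or $\E$ irrespective of the ``natural'' type, and the weight there is shifted; simultaneously $\Vedif$ truncates excedance intervals at $\pi(i)-1$ and appends the tail $[\pone(\pi)+1,n]$. One must verify that the height-one discrepancy created by the modified step at $\pone(\pi)$ is exactly absorbed by the modified definition of $\Vedif$, and symmetrically that the forced $\S$ at position $n$ closes off the tail correctly. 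I expect this to require carefully separating the induction into the ranges $i<\pone(\pi)$, $i=\pone(\pi)$, $\pone(\pi)<i<n$, and $i=n$, and checking the multiplicity increment in each --- routine but error-prone, and best done with the permutation-diagram picture of Figures~\ref{fig:fourtype} and~\ref{fig:fourtype2} in hand.
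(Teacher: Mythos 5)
Your proposal matches the paper's own proof in both structure and emphasis: the paper likewise writes out only the induction for $\Vedif(\pi)=\Het(W)$ (with exactly the case split on the step type of $\widetilde w_i$, the subcases $i=\pone(\pi)$ versus $i\neq\pone(\pi)$, and the boundary at $i=n$ that you describe), and dismisses the remaining coordinates of \eqref{eq:YZLSET} and the second display as routine consequences of the definitions and of multiset arithmetic. One slip to correct in your justification of the first six coordinates: you have swapped the type dictionary --- the steps with $i<\pi(i)$ are types (I),(II), i.e.\ $\N,\E=\NE$, while those with $i+1\le\pi^{-1}(i+1)$ (equivalently $i+1\notin\Exc(\pi)$) are types (I),(III), i.e.\ $\N,\dE=\NdE$; moreover $\pone(\pi)$ is never in $\Excp(\pi)$, so the choice of $\widetilde w_{\pone(\pi)}$ is governed by $\Vnex$-membership rather than $\Excp$-membership --- none of which affects the (correct) top-level equivalences you actually use.
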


\begin{proof}
We just present the proof of $\Vedif(\pi)=\Het(W)$, while others can be verified through definitions and routine derivations.

Clearly, neither $\Vedif(\pi)$ nor $\Het(W)$ contains $1$. Suppose that the number of $i$'s contained in $\Vedif(\pi)$ is equal to that of $\Het(W)$. We wish to show that it also holds for $i+1$. There are three cases to consider:
\begin{itemize}
  \item If $h_{i+1}=h_{i}+1$, then $w_{i}=\N$ and
$|\Het(W)|_{i+1}=|\Het(W)|_{i}+1$. It suffices to prove that $|\Vedif(\pi)|_{i+1}=|\Vedif(\pi)|_{i}+1$. We consider two cases.
   \begin{itemize}
     \item  If $i \in \Scval(\pi)$ with $i\neq\pone(\pi)$, then $i < \pi(i)$ and $i+1 \leq \pi^{-1}(i+1)$. This implies that $i+1 \in (i,\pi(i))$, contributing one copy of $i+1$ to the multiset $\Vedif(\pi)$. Moreover, for every $j\in\Excp(\pi)$ with $1 \leq j < i$, the interval $(j,\pi(j))$ contains $i$ if and only if it contains $i+1$. Also, $(\pone(\pi),n]$ contains $i$  if and only if it contains $i+1$. So indeed $|\Vedif(\pi)|_{i+1}=|\Vedif(\pi)|_{i}+1$.
     \item  If $i=\pone(\pi)$ and $i+1 \leq \pi^{-1}(i+1)$, then $i \notin (\pone(\pi),n]$ and $i+1 \in (\pone(\pi),n]$. Moreover, for every $j\in\Excp(\pi)$ with $1 \leq j < i$, the interval $(j,\pi(j))$ contains $i$ if and only if it contains $i+1$, we therefore deduce that $|\Vedif(\pi)|_{i+1}=|\Vedif(\pi)|_{i}+1$ as well.
   \end{itemize}

  \item If $h_{i+1}=h_i$, then $w_i=\E$ or $w_i=\dE$ and $|\Het(W)|_{i+1}=|\Het(W)|_{i}$. We wish to show that $|\Vedif(\pi)|_{i+1}=|\Vedif(\pi)|_{i}$.
    \begin{itemize}
        \item If $w_i=\E$, we consider two cases.
        \begin{itemize}
            \item If $i \neq \pone(\pi)$, we have $i \in \Scda(\pi)$. I.e., $i < \pi(i)$ and $i+1 > \pi^{-1}(i+1)$. If $\pi(i)=i+1$, then for every $j\in\Excp(\pi)$ with $1 \leq j \le i$, we see that $(j,\pi(j))$ contains $i$ if and only if it contains $i+1$. Since $i \neq \pone(\pi)$, then $(\pone(\pi),n]$ contains $i$  if and only if it contains $i+1$.
            If $\pi(i) \neq i+1$, then $(\pi^{-1}(i+1),i+1)$ contains $i$ but not $i+1$, $(i,\pi(i))$ contains $i+1$ but not $i$, while other intervals contain $i$ if and only if they contain $i+1$. In both cases, we see that $|\Vedif(\pi)|_{i+1}=|\Vedif(\pi)|_{i}$, as desired.
            \item If $i=\pone(\pi)$, then we have $i+1>\pi^{-1}(i+1)$. Since $\pi(i) \neq i+1$, the interval $(\pi^{-1}(i+1),i+1)$ contains $i$ but not $i+1$. Also notice that in this case $(\pone(\pi),n]$ contains $i+1$ but not $i$. It follows from similar verifications with the remaining intervals that $|\Vedif(\pi)|_{i+1}=|\Vedif(\pi)|_{i}$.
        \end{itemize}

           \item If $w_i=\dE$, then we see that $i\in \Scdd(\pi)$ and $i \neq \pone(\pi)$. Since $i \geq \pi(i)$ and $i+1 \leq \pi^{-1}(i+1)$, we deduce that for every $j\in\Excp(\pi)$, the interval $(j,\pi(j))$ contains $i$ if and only if it contains $i+1$. Since $i \neq \pone(\pi)$, we see $(\pone(\pi),n]$ contains $i$ if and only if it contains $i+1$. So in this case $|\Vedif(\pi)|_{i+1}=|\Vedif(\pi)|_{i}$ holds as well.
    \end{itemize}

  \item If $h_{i+1}=h_i-1$, then $w_i=\S$ and
        $|\Het(W)|_{i+1}=|\Het(W)|_{i}-1$. We claim that $|\Vedif(\pi)|_{i+1}=|\Vedif(\pi)|_{i}-1$. Since $i \leq n-1$, we have $i\in \Scpk(\pi)$ and $i \neq \pone(\pi)$.
        Since $i\geq\pi(i)$ and $i+1>\pi^{-1}(i+1)$, we deduce that $(\pi^{-1}(i+1),i+1)$ contains $i$ but not $i+1$, while other intervals $(j,\pi(j))$ with $j\in\Excp(\pi)$ contain $i$ if and only if they contain $i+1$. So we see the claim holds true.
\end{itemize}

Combining all the cases above we finish the induction step, and thus come to the conclusion that $\Vedif(\pi)=\Het(W)$.
\end{proof}


\begin{corollary}
Let $\rho=\Phi_{\YZL}^{-1}\circ \xi \circ \Phi_{\YZL}$ and $\pi \in \SS_n$, then
\begin{align*}
  &(\Vnexpb,\Vnexpa,\Vexpb,\Vexpa,\Vnest \sqcup \Vnexpb \setminus \Vexpa ,\Vnest,\Vedif  \setminus \Vnexpb\setminus\Vnexpa \setminus \Vnest) \: \pi \\[4pt]
&=\kappa_{n+1} \circ (\Vexpa,\Vexpb,\Vnexpa,\Vnexpb,\Vnest,\Vnest \sqcup \Vnexpb \setminus \Vexpa,\\
&\qquad\qquad\qquad \Vedif  \setminus \Vnexpb\setminus\Vnexpa\setminus \Vnest) \:\rho(\pi),\\[4pt]
&[n-1] \setminus\,  \Vnex(\pi)=  \kappa_{n} \circ \Vnex(\rho(\pi)).
\end{align*}
\end{corollary}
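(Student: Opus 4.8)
The plan is to obtain this corollary by exactly the same three-fold composition that produced Corollary~\ref{coro:desbased}, now feeding Proposition~\ref{prop:YZL} and the bijection $\Phi_{\YZL}$ into the machine in place of Proposition~\ref{prop:FV} and $\Phi_{\FV}$. Fix $\pi\in\SS_n$, set $W=\Phi_{\YZL}(\pi)$ and $V=\xi(W)$, so that $\rho(\pi)=\Phi_{\YZL}^{-1}(V)$, and hence $V=\Phi_{\YZL}(\rho(\pi))$. By Lemma~\ref{lem:cspone} we have $\pone(\pi)=\cs(W)$, and together with part~(1) of Theorem~\ref{thm:inv-xsi} this yields $\pone(\rho(\pi))=n+1-\pone(\pi)$; this is what legitimizes matching ``before $\pone$'' refinements of $\pi$ with ``after $\pone$'' refinements of $\rho(\pi)$ in the chain below.

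First, I would invoke Proposition~\ref{prop:YZL} (both displayed identities) to rewrite the seven permutation statistics on the left as statistics of $W$: $\Vnexpb(\pi)=\Sdeb(W)$, $\Vnexpa(\pi)=\Sdea(W)$, $\Vexpb(\pi)=\Neb(W)$, $\Vexpa(\pi)=\Nea(W)$, $(\Vnest\sqcup\Vnexpb\setminus\Vexpa)(\pi)=(\Wt\setminus\Nea\setminus\Sdea)(W)$, $\Vnest(\pi)=(\Wt\setminus\Sdeb\setminus\Sdea)(W)$, and $(\Vedif\setminus\Vnexpb\setminus\Vnexpa\setminus\Vnest)(\pi)=(\Het\setminus\Wt)(W)$. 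Then equation~\eqref{coro:xieq1} of Corollary~\ref{coro:xi-stats} expresses each of these as $\kappa_{n+1}$ of a statistic of $V$: the four ``position'' statistics simply swap, $\Sdeb(W)=\kappa_{n+1}(\Nea(V))$, $\Neb(W)=\kappa_{n+1}(\Sdea(V))$, and symmetrically; while combining these swaps with \eqref{coro:xieq1} and using that $\kappa_{n+1}$ is a multiset involution commuting with $\sqcup$ and $\setminus$ gives $(\Wt\setminus\Nea\setminus\Sdea)(W)=\kappa_{n+1}((\Wt\setminus\Sdeb\setminus\Sdea)(V))$, $(\Wt\setminus\Sdeb\setminus\Sdea)(W)=\kappa_{n+1}((\Wt\setminus\Nea\setminus\Sdea)(V))$, and $(\Het\setminus\Wt)(W)=\kappa_{n+1}((\Het\setminus\Wt)(V))$. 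Finally, applying Proposition~\ref{prop:YZL} to $V=\Phi_{\YZL}(\rho(\pi))$ converts every statistic of $V$ into the matching permutation statistic of $\rho(\pi)$, and assembling the chain delivers precisely the seven asserted identities.

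For the last equality, the key auxiliary observation I would establish is that $\Vnex(\pi)=\Nde(\Phi_{\YZL}(\pi))$ as subsets of $[n-1]$: reading off the construction of $\widetilde{w}$, for $i\neq\pone(\pi)$ one has $\widetilde{w}_i\in\{\N,\dE\}$ iff $i+1\le\pi^{-1}(i+1)$ iff $i+1\notin\Exc(\pi)$; the same equivalence persists at $i=\pone(\pi)<n$, where $\widetilde{w}_i=\N$ exactly when $i+1\le\pi^{-1}(i+1)$; and $\widetilde{w}_n$ is never an $\NdE$ step. Granting this, $[n-1]\setminus\Vnex(\pi)=[n-1]\setminus\Nde(W)=\kappa_n(\Nde(V))=\kappa_n(\Vnex(\rho(\pi)))$ by \eqref{coro:xieq2} and a second application of the observation to $V$.

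I expect no genuine conceptual obstacle here — all of the substantive content already lives in Proposition~\ref{prop:YZL} and Theorem~\ref{thm:inv-xsi}. The real work is bookkeeping: one must track carefully how each refined statistic is relabeled across the two successive involutive passes (excedance $\leftrightarrow$ non-excedance, and ``before $\pone$'' $\leftrightarrow$ ``after $\pone$''), and remember that the complementation index drops from $n+1$ to $n$ exactly for the set-valued statistics supported on $[n-1]$.
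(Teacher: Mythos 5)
Your proposal is correct and follows essentially the same route the paper intends: the corollary is obtained by sandwiching Corollary~\ref{coro:xi-stats} between two applications of Proposition~\ref{prop:YZL}, exactly as Corollary~\ref{coro:desbased} was obtained from Proposition~\ref{prop:FV}. The details you supply — the cancellations inside $\kappa_{n+1}$ of the multiset unions/differences, and the auxiliary identification $\Vnex(\pi)=\Nde(\Phi_{\YZL}(\pi))$ read off from the construction of $\widetilde{w}$ — all check out against the paper's definitions.
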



Readers are recommended to check this corollary via the previous example $\sigma=671395482$ and its image $\rho(\sigma)=937628145$.

\subsection{Multiset-valued Mahonian statistics}\label{subsec:Mahonian}
Recall the classical permutation statistic \emph{inversion number}, defined for each $\pi\in\SS_n$ as $\inv(\pi)=\#\{1\le i<j\le n : \pi(i)>\pi(j)\}.$ A statistic that is equidistributed with $\inv$ is said to be \emph{Mahonian}.

As commented in \cite{CSZ}, new Mahonian statistics are ``constantly entering the scene''; see for example \cite{FZ90,SS,CSZ,BS}. Our present work is no exception. In this final subsection, we introduce seven apparently new Mahonian statistics. In fact, we derive first the multiset-valued statistics, and then take the cardinalities of the multisets to get these new Mahonian statistics. From this perspective, those multiset-valued counterparts are also said to be Mahonian. We note that all the multiset-valued Mahonian statistics considered here arise naturally as we consider the three mappings $\Phi_{\FV}$, $\Phi_{\FZ}$, $\Phi_{\YZL}$, and their compositions with our involution $\xi$, namely $\phi=\Phi_{\FV}^{-1}\circ \xi \circ \Phi_{\FV}$, $\eta=\Phi_{\FZ}^{-1}\circ \xi \circ \Phi_{\FZ}$, and $\rho=\Phi_{\YZL}^{-1}\circ \xi \circ \Phi_{\YZL}$.

Our starting point is the following four $\des$-based Mahonian statistics considered by Clarke, Steingr\'{i}msson and Zeng \cite{CSZ}, reformulated in our notations:
\begin{align*}
   \mak(\pi) &=\dbot(\pi)+\ppwrs(\pi), \,\,\,\,\,\, \mad(\pi)=\ddif(\pi)+\ppwrs(\pi),   \\
 \makl(\pi)&=\dbot(\pi)+\pp31-2(\pi),\,\,\,\,\,\,
  \madl(\pi)  =\ddif(\pi)+ \pp31-2(\pi),
\end{align*}
where $\dbot(\pi)$ and $\ddif(\pi)$ are the cardinalities of the multisets $\Dbot(\pi)$ and $\Ddif(\pi)$, respectively. Note that $\mak$ was first defined by Foata and Zeilberger~\cite{FZ90}, while in \cite{CSZ} all of the above four statistics were extended to words.
Based on this, it seems natural to introduce the following four multiset-valued Mahonian statisitcs over permutations:
\begin{align}
\label{def:fourmulst1}
  \Mak &=\Dbot \sqcup \rightdescent,    \,\,\,\,\,\,\,
  \Mad =\Ddif \sqcup \rightdescent,  \\
  \Makl &=\Dbot \sqcup \leftdescent,    \,\,\,\,\,\,\,
  \Madl =\Ddif \sqcup \leftdescent.
  \label{def:fourmulst2}
\end{align}

In view of (\ref{def:fourmulst1}), (\ref{def:fourmulst2}), and thanks to the equidistributions in Proposition \ref{prop:FV}, we are rewarded with the following four multiset-valued Mahonian statistics over sr-Laguerre
histories:
\begin{align}
\label{LH1}\overline{\Nde} &\sqcup (\Wt \setminus \Sdeb \setminus \Sdea),\\
\label{LH2}\Het &\sqcup (\Wt \setminus \Sdeb \setminus \Sdea),  \\
\label{LH3}\overline{\Nde} &\sqcup   (\Het \setminus \Wt),    \\
\label{LH4}\Het &\sqcup   (\Het \setminus \Wt),
\end{align}
where for a history $W\in\LH_n$, $\overline{\Nde}(W)$ is the multiset consisting of $i$ copies of $i$, for each $i \in \Nde(W)$. It should be noted that $\Dbot$ is mapped by $\Phi_{\FV}$
to $\overline{\Nde}$ as a result of $\Db$ being mapped by $\Phi_{\FV}$
to $\Nde$; see the footnote 2 on page 9.

On the other hand, by applying the involution $\xi$, we deduce that the following four companion multiset-valued statistics over sr-Laguerre histories are also Mahonian:
\begin{align}
\label{LH5} &\widetilde{\Nde }\sqcup \kappa_{n+1}\circ(\Wt \setminus \Nea \setminus \Sdea),\\
\label{LH6} &\kappa_{n+1}\circ((\Het \sqcup \Neb \setminus \Sdea)\sqcup (\Wt \setminus
\Nea \setminus \Sdea)),  \\
\label{LH7} &\widetilde{\Nde } \sqcup \kappa_{n+1}\circ(\Het \setminus \Wt),\\
\label{LH8} &\kappa_{n+1}\circ((\Het \sqcup \Neb \setminus \Sdea) \sqcup (\Het \setminus \Wt)).
\end{align}
Here for a history $W\in\LH_n$, the multiset $\widetilde{\Nde}(W)$ is composed of $n-i$ copies of $n-i$ for each $i \in [n-1]\setminus\Nde(W)$. Similarly, for a permutation $\pi\in\SS_n$, if we denote $\widetilde{\Db}(\pi)$ the multiset consisting of $n-i$ copies of $n-i$ for each $i\in[n-1]\setminus\Db(\pi)$, we get the following four multiset-valued Mahonian statistics over permutations, which correspond to \eqref{LH5}--\eqref{LH8} under the bijection $\Phi_{\FV}$.
\begin{align*}
\Mak' &:=\widetilde{\Db} \sqcup \kappa_{n+1}\circ\rightascent,\\
\Mad' &:=\kappa_{n+1}\circ((\Ddif\cup\Abb\setminus\Dta) \sqcup \rightascent),\\
\Makl' &:=\widetilde{\Db} \sqcup \kappa_{n+1}\circ\leftdescent,\\
\Madl' &:=\kappa_{n+1}\circ((\Ddif\sqcup\Abb\setminus\Dta) \sqcup \leftdescent).
\end{align*}

In the same vein, all multiset-valued permutation statistics equidistributed with the eight statistics \eqref{LH1}--\eqref{LH8} under the other two bijections $\Phi_{\FZ}$ and $\Phi_{\YZL}$ are automatically Mahonian. Thus, there are in total twenty four of them.

Associated with the mapping $\Phi_{\FZ}$ and relying on Proposition~\ref{prop:FZ}, we have\footnote{Note that the numerical statistics corresponding to $\Inv$ and $\Den$ are precisely the inversion number $\inv$ and the Denert's statistic $\den$~\cite{FZ90}, so we follow the convention to name the multiset-valued statistics $\Den$ and $\Inv$, rather than naming them $\FZ1$ and $\FZ2$.}
\begin{align*}
  \Den & :=\Ebot \sqcup \Ine, \\
  \Inv & :=\Edif \sqcup \Ine,\\
  \FZthree & :=\Ebot \sqcup
  (\Edif \setminus \Exc \setminus \Ine), \\
  \FZfour & :=\Edif \sqcup (\Edif \setminus \Exc \setminus \Ine),
\end{align*}
corresponding to \eqref{LH1}--\eqref{LH4}, while the following four are the counterparts of \eqref{LH5}--\eqref{LH8}:
\begin{align*}
\Den'& :=\widetilde{\Excp }\sqcup\kappa_{n+1}\circ(\Ine \, \sqcup  \Excb \setminus\Nexca),\\
\Inv'& :=\kappa_{n+1}\circ((\Edif \sqcup \Nexcb \setminus \Exca) \sqcup(\Ine \, \sqcup  \Excb \setminus\Nexca)),  \\
\FZthree'& :=\widetilde{\Excp  } \sqcup \kappa_{n+1}\circ(\Edif\setminus \Exc \setminus \Ine),\\
\FZfour'& :=\kappa_{n+1}\circ((\Edif \sqcup \Nexcb \setminus \Exca) \sqcup   (\Edif\setminus \Exc \setminus \Ine)),
\end{align*}
where again for a permutation $\pi\in\SS_n$, the multiset $\widetilde{\Excp}(\pi)$ is composed of $n-i$ copies of $n-i$ for each $i\in[n-1]\setminus\Excp(\pi)$.

The final eight multiset-valued Mahonian statistics are associated with the mapping $\Phi_{\YZL}$ and rely on Proposition~\ref{prop:YZL}. Here for a permutation $\pi\in\SS_n$, the multiset $\widetilde{\Vnex}(\pi)$ is composed of $n-i$ copies of $n-i$ for each $i\in[n-1]\setminus\Vnex(\pi)$.
\begin{align*}
\YZLone &:=\Vbot \sqcup \Vnest, \; \YZLtwo :=\Vedif \sqcup \Vnest, \\
\YZLthree &:=\Vbot \sqcup (\Vedif \setminus\Vnexpb\setminus\Vnexpa\setminus\Vnest),\\
\YZLfour &:=\Vedif \sqcup (\Vedif \setminus\Vnexpb\setminus\Vnexpa\setminus\Vnest), \\
  \YZLone' &:=\widetilde{\Vnex} \sqcup
\kappa_{n+1}\circ(\Vnest \sqcup\Vnexpb \setminus \Vexpa), \\
  \YZLtwo' &:=\kappa_{n+1}\circ((\Vedif \sqcup\Vexpb \setminus \Vnexpa) \sqcup (\Vnest \sqcup\Vnexpb \setminus \Vexpa)), \\
  \YZLthree' &:=\widetilde{\Vnex} \sqcup
   \kappa_{n+1}\circ(\Vedif \setminus\Vnexpb\setminus\Vnexpa\setminus\Vnest), \\
  \YZLfour' &:=\kappa_{n+1}\circ((\Vedif \sqcup\Vexpb \setminus \Vnexpa) \sqcup (\Vedif \setminus\Vnexpb\setminus\Vnexpa\setminus\Vnest)).
\end{align*}

Each multiset-valued Mahonian statistic induces a numerical Mahonian statistic, for which we denote by lowercase letters. Aside from those induced by \eqref{def:fourmulst1}, \eqref{def:fourmulst2}, $\Den$, and $\Inv$, we still have eighteen such Mahonian statistics. We catalog them (in some cases after simplification) in Table~\ref{Mahonian stats} and record these results in the following theorems and Appendix~\ref{sec:appendix}.

\begin{table}[htb]
{\small
\begin{tabular*}{5in}{ccc}
\toprule
Name  & Definition & Reference\\
\midrule\vspace{4pt}
$\maj$ \,\,\,\,\,\,\,\,& $(1\underline{32})+(2\underline{31})+(3\underline{21})+(\underline{21})$ \,\,\,\,\,\,\,\,& MacMahon \cite{M}\\\vspace{4pt}

$\inv$ \,\,\,\,\,\,\,\,& $(\underline{23}1)+(\underline{31}2)+(\underline{32}1)+(\underline{21})$ \,\,\,\,\,\,\,\,& MacMahon \cite{M}\\\vspace{4pt}

$\mak$\,\,\,\,\,\,\,\, & $(1\underline{32})+(2\underline{31})+(\underline{32}1)+(\underline{21})$ \,\,\,\,\,\,\,\,& Foata-Zeilberger\cite{FZ90}\\\vspace{4pt}

$\makl$ \,\,\,\,\,\,\,\,& $(1\underline{32})+(\underline{31}2)+(\underline{32}1)+(\underline{21})$ \,\,\,\,\,\,\,\,& Clarke-Steingr\'{i}msson-Zeng \cite{CSZ}\\\vspace{4pt}

$\mad$\,\,\,\,\,\,\,\, & $(2\underline{31})+(2\underline{31})+(\underline{31}2)+(\underline{21})$ \,\,\,\,\,\,\,\,& Clarke-Steingr\'{i}msson-Zeng \cite{CSZ} \\\vspace{4pt}

$\madl$\,\,\,\,\,\,\,\, & $(2\underline{31})+(\underline{31}2)+(\underline{31}2)+(\underline{21})$ \,\,\,\,\,\,\,\,& Clarke-Steingr\'{i}msson-Zeng \cite{CSZ} \\\vspace{4pt}

$\bast$ \,\,\,\,\,\,\,\,& $(\underline{13}2)+(\underline{21}3)+(\underline{32}1)+(\underline{21})$ \,\,\,\,\,\,\,\,& Babson-Steingr\'{i}msson \cite{BS} \\\vspace{4pt}

$\bast'$\,\,\,\,\,\,\,\, & $(\underline{13}2)+(\underline{31}2)+(\underline{32}1)+(\underline{21})$ \,\,\,\,\,\,\,\,& Babson-Steingr\'{i}msson \cite{BS} \\\vspace{4pt}

 $\bast''$ \,\,\,\,\,\,\,\,& $(1\underline{32})+(3\underline{12})+(3\underline{21})+(\underline{21})$ \,\,\,\,\,\,\,\,& Babson-Steingr\'{i}msson \cite{BS} \\\vspace{4pt}

 $\foze$ \,\,\,\,\,\,\,\,& $(\underline{21}3)+(3\underline{21})+(\underline{13}2)+(\underline{21})$ \,\,\,\,\,\,\,\,& Foata-Zeilberger \cite{FZ01}\\\vspace{4pt}

 $\foze'$ \,\,\,\,\,\,\,\,& $(1\underline{32})+(2\underline{31})+(2\underline{31})+(\underline{21})$ \,\,\,\,\,\,\,\,& Foata-Zeilberger \cite{FZ01}\\\vspace{4pt}

  $\foze''$ \,\,\,\,\,\,\,\,& $(\underline{23}1)+(\underline{31}2)+(\underline{31}2)+(\underline{21})$ \,\,\,\,\,\,\,\,& Foata-Zeilberger \cite{FZ01} \\\vspace{4pt}

  $\sist$\,\,\,\,\,\,\,\, & $(\underline{13}2)+(\underline{13}2)+(2\underline{13})+(\underline{21})$ \,\,\,\,\,\,\,\,& Simion-Stanton \cite{SS}\\\vspace{4pt}

  $\sist'$ \,\,\,\,\,\,\,\,& $(\underline{13}2)+(\underline{13}2)+(2\underline{31})+(\underline{21})$ \,\,\,\,\,\,\,\,& Simion-Stanton \cite{SS}\\\vspace{4pt}

  $\sist''$ \,\,\,\,\,\,\,\,& $(\underline{13}2)+(2\underline{31})+(2\underline{31})+(\underline{21})$ \,\,\,\,\,\,\,\,& Simion-Stanton \cite{SS}\\\vspace{4pt}

  $\den$ \,\,\,\,\,\,\,\,& $\ebot+\ine$ \,\,\,\,\,\,\,\,& Denert \cite{Den}\\\vspace{4pt}

  $\sort$ \,\,\,\,\,\,\,\,&
  \tabincell{c}{\small{$\pi=(i_1,j_1)(i_2,j_2)\cdots (i_k,j_k),$}\\
  \small{$\sort(\pi)=\sum_{r=1}^{k}(j_r-i_r)$}}
  \,\,\,\,\,\,\,\,& Petersen \cite{P}\\

\bottomrule\\
\end{tabular*}
}
\caption{Seventeen known Mahonian statistics defined for $\pi=\pi_1\cdots\pi_n\in\SS_n$}\label{known Mahonian stats}
\end{table}

\begin{remark}
For a certain statistic in Table~\ref{Mahonian stats} that is commented with, say $n=3$, we mean that as our computer-asisted verification gets to permutations of $n=3$ letters, this statistic is different from all seventeen Mahonian statistics listed in Table~\ref{known Mahonian stats} (we have largely followed the nomenclature of Amini~\cite[Tab.~1]{Am} and corrected two typos thereof in the definitions of $\mak$ and $\makl$), as well as their images under the action of the dihedral group $D_4$ that is generated by the three trivial bijections: reverse ($\r:\pi\mapsto\pi^{\r}$), complement ($\cc:\pi\mapsto\pi^{\cc}$), and inverse ($\i:\pi\mapsto\pi^{\i}$)~\cite[Defn.~1.0.12]{Kit}. We are inclined to believe these seven Mahonian statistics are new, but whether or not they have already appeared in the literature in some disguise remains to be seen.
\end{remark}

\begin{theorem}\label{thm:Mahon stats}
The permutation statistics listed in Table~\ref{Mahonian stats} are all Mahonian. In particular, for each permutation $\pi$, we have
\begin{align}
\label{mad=sist}\mad'(\pi) &= \sist''(\pi^{\cc}),\\
\label{madl=sist}\madl'(\pi) &= \sist'(\pi^{\cc}),\\
\label{makl'=makl^c}\makl'(\pi) &= \makl(\pi^{\cc}).
\end{align}
\end{theorem}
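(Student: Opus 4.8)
\emph{Part 1: every statistic in Table~\ref{Mahonian stats} is Mahonian.} The plan is to route all of them through the bijections already in hand. By Proposition~\ref{prop:FV}, $\Phi_{\FV}$ carries the quadruple $(\Mak,\Mad,\Makl,\Madl)$ on $\SS_n$ --- whose cardinalities are the classical Mahonian statistics $\mak,\mad,\makl,\madl$ --- onto the four $\LH_n$-statistics \eqref{LH1}--\eqref{LH4}; by Corollary~\ref{coro:xi-stats} the involution $\xi$ carries \eqref{LH1}--\eqref{LH4} onto \eqref{LH5}--\eqref{LH8}; and Propositions~\ref{prop:FV}, \ref{prop:FZ}, \ref{prop:YZL} carry the eight statistics \eqref{LH1}--\eqref{LH8} on $\LH_n$ back, via $\Phi_{\FV}^{-1},\Phi_{\FZ}^{-1},\Phi_{\YZL}^{-1}$, onto exactly the statistics $\Mak',\dots,\Madl'$, $\Den,\dots,\FZfour'$, $\YZLone,\dots,\YZLfour'$. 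All the maps involved are bijections that transport the underlying multisets verbatim, and for each of these statistics the cardinality of the displayed combination of multisets equals the corresponding signed sum of cardinalities --- this uses only evident containments such as $\Dt\subseteq\Ddif$, $\Dta\subseteq\Ddif$, $\Sdea\subseteq\Het$ and their bijective images $\Nexca\subseteq\Ine$, so that $|A\sqcup B\setminus C|=|A|+|B|-|C|$. Hence every entry of Table~\ref{Mahonian stats} is equidistributed over $\SS_n$ with one of $\mak,\mad,\makl,\madl$, and is therefore Mahonian. One also reads off $\Makl'(\pi)=\Makl(\phi(\pi))$, $\Mad'(\pi)=\Mad(\phi(\pi))$, $\Madl'(\pi)=\Madl(\phi(\pi))$, where $\phi=\Phi_{\FV}^{-1}\circ\xi\circ\Phi_{\FV}$.

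\emph{Part 2: reducing the three pointwise identities to one discrepancy.} The plan is to put both sides of \eqref{mad=sist}--\eqref{makl'=makl^c} into closed scalar form. For the left sides I would use the definitions of $\Makl',\Mad',\Madl'$; the cardinalities $|\rightascent(\pi)|=\pprs(\pi)$, $|\leftdescent(\pi)|=\pp31-2(\pi)$, $|\widetilde{\Db}(\pi)|=\binom{n}{2}-n\des(\pi)+\dbot(\pi)$; the identity $\ddif=\ppwrs+\pp31-2+\des$ (read off by comparing the definition $\mad=\ddif+\ppwrs$ with the expansion $\mad=(2\underline{31})+(2\underline{31})+(\underline{31}2)+(\underline{21})$ of Table~\ref{known Mahonian stats}); and the elementary count $|\Abb(\pi)|-|\Dta(\pi)|=\pi(n)-1-\des(\pi)$ (split the positions $i\in[n-1]$ according to whether $i$ is an ascent and whether $\pi(i)\lessgtr\pi(n)$). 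This gives
\begin{align*}
\makl'(\pi)&=\binom{n}{2}-n\des(\pi)+\dbot(\pi)+\pp31-2(\pi),\\
\mad'(\pi)&=\ppwrs(\pi)+\pprs(\pi)+\pp31-2(\pi)+\pi(n)-1,\\
\madl'(\pi)&=\ppwrs(\pi)+2\cdot\pp31-2(\pi)+\pi(n)-1.
\end{align*}
For the right sides I would use $\makl=\dbot+\pp31-2$, the expansions of $\sist',\sist''$ in Table~\ref{known Mahonian stats}, the standard action of complement and reversal on vincular patterns (complement interchanges $\underline{13}2\leftrightarrow\underline{31}2$ and $2\underline{31}\leftrightarrow 2\underline{13}$, sends $\des$ to $\asc=n-1-\des$, and sends $\dbot(\pi)$ to $(n+1)\asc(\pi)-A(\pi)$, where $A(\pi):=\sum_{\pi(i)<\pi(i+1)}\pi(i+1)$), and the identity $\dbot(\pi)+A(\pi)=\binom{n+1}{2}-\pi(1)$; this gives
\begin{align*}
\makl(\pi^{\cc})&=(n+1)\asc(\pi)-\binom{n+1}{2}+\pi(1)+\dbot(\pi)+\pp31-2(\pi^{\cc}),\\
\sist''(\pi^{\cc})&=\pp31-2(\pi)+2\cdot\pprs(\pi)+\asc(\pi),\\
\sist'(\pi^{\cc})&=2\cdot\pp31-2(\pi)+\pprs(\pi)+\asc(\pi).
\end{align*}
Cancelling, \eqref{mad=sist} and \eqref{madl=sist} both collapse to the single identity
\[
\ppwrs(\pi)-\pprs(\pi)=n-\des(\pi)-\pi(n),
\]
and \eqref{makl'=makl^c} collapses to $\pp31-2(\pi^{\cc})-\pp31-2(\pi)=\des(\pi)+1-\pi(1)$; this last reduces to the previous one applied to the reversal $\pi^{\r}$, because the pattern rules give $\pp31-2(\pi^{\cc})-\pp31-2(\pi)=\underline{13}2(\pi)-\underline{31}2(\pi)=\ppwrs(\pi^{\r})-\pprs(\pi^{\r})$, while $\des(\pi^{\r})=\asc(\pi)$ and $\pi^{\r}(n)=\pi(1)$.

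\emph{Part 3: proving $\ppwrs(\pi)-\pprs(\pi)=n-\des(\pi)-\pi(n)$.} I would prove this by telescoping. Fix $i\in[n-1]$ and set $b_j:=\chi(\pi(j)<\pi(i))$ for $j\neq i$; then for $i<j<n$ one has $\chi(\pi(j+1)<\pi(i)<\pi(j))-\chi(\pi(j)<\pi(i)<\pi(j+1))=b_{j+1}(1-b_j)-b_j(1-b_{j+1})=b_{j+1}-b_j$, so
\[
\ppwrs_i(\pi)-\pprs_i(\pi)=\sum_{j=i+1}^{n-1}(b_{j+1}-b_j)=b_n-b_{i+1}=\chi(\pi(n)<\pi(i))-\chi(\pi(i+1)<\pi(i)).
\]
Summing over $i=1,\dots,n-1$ (the $i=n$ term vanishes on both sides) yields $\ppwrs(\pi)-\pprs(\pi)=\#\{i<n:\pi(i)>\pi(n)\}-\des(\pi)=(n-\pi(n))-\des(\pi)$, as wanted.

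\emph{Main obstacle.} Part~1 is essentially definition-chasing through the bijections. The genuine work lies in Part~2: carrying out the reductions of the multiset definitions of $\Mak',\Mad',\Madl',\Makl'$ and of $\makl(\pi^{\cc}),\sist'(\pi^{\cc}),\sist''(\pi^{\cc})$ to the scalar forms above without sign or boundary errors --- the $\binom{n}{2}$ and $\binom{n+1}{2}$ terms, the $\pi(1)$ and $\pi(n)$ corrections, and the count for $|\Abb|-|\Dta|$ all require care --- and then checking that after all cancellations everything really does reduce to the one discrepancy identity handled in Part~3. A sanity check of the whole plan on small cases ($n\le 4$) is advisable.
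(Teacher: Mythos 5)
Your proposal is correct and shares the paper's overall skeleton: the Mahonian claim is delegated to the bijective transport of the multiset statistics constructed in subsection~\ref{subsec:Mahonian}, and the three pointwise identities are reduced to scalar identities among vincular pattern counts, everything ultimately resting on the single relation $\ppwrs(\pi)-\pprs(\pi)=n-\des(\pi)-\pi(n)$, which is exactly the paper's Lemma~\eqref{id:213-231} in the form $(\pprs+\mathrm{\underline{12}})\,\pi=(\ppwrs)\,\pi+\pi(n)-1$. You diverge in two places, both defensibly to your advantage. First, you prove that key relation by telescoping the indicators $b_j=\chi(\pi(j)<\pi(i))$, which is shorter and less error-prone than the paper's decomposition of each suffix into alternating blocks of letters above and below $\pi(i)$. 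Second, and more substantially, for \eqref{makl'=makl^c} the paper introduces a second double-counting lemma (identity~\eqref{id:all complement}), whereas you rewrite $\makl(\pi^{\cc})=\dbot(\pi^{\cc})+\pp31-2(\pi^{\cc})$, use $\dbot(\pi)+\sum_{\pi(i)<\pi(i+1)}\pi(i+1)=\binom{n+1}{2}-\pi(1)$ to express $\dbot(\pi^{\cc})$, and observe that the identity collapses to $\pp31-2(\pi^{\cc})-\pp31-2(\pi)=\des(\pi)+1-\pi(1)$, which is the same discrepancy identity applied to $\pi^{\r}$ (via $\underline{13}2(\pi)=\ppwrs(\pi^{\r})$ and $\underline{31}2(\pi)=\pprs(\pi^{\r})$). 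This buys a proof of all three identities from one lemma instead of two; what the paper's route buys in exchange is the explicit double-counting identity~\eqref{id:all complement}, which has some independent interest. Your intermediate cardinality computations ($|\widetilde{\Db}(\pi)|=\binom{n}{2}-n\des(\pi)+\dbot(\pi)$, $|\Abb(\pi)|-|\Dta(\pi)|=\pi(n)-1-\des(\pi)$, $\ddif=\ppwrs+\pp31-2+\des$, and the action of complement and reversal on the patterns) all check out against Table~\ref{Mahonian stats}, so the plan goes through as written.
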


It suffices to show \eqref{mad=sist}--\eqref{makl'=makl^c}. To this end, we need the following two lemmas.
\begin{lemma}
For each permutation $\pi=\pi_1\cdots\pi_n$, we have
\begin{align}
\label{id:213-231}
(\pprs+\mathrm{\underline{12}})\:\pi = (\ppwrs)\:\pi+\pi_n-1.
\end{align}
\end{lemma}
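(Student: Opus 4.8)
The plan is to rewrite \eqref{id:213-231} in the equivalent form
\[
(\pprs)\,\pi-(\ppwrs)\,\pi \;=\; \pi_n-1-(\mathrm{\underline{12}})\,\pi
\]
and prove it by a position-by-position telescoping. First I would fix $i\in[n]$, set $v:=\pi(i)$, and observe that, straight from the definitions of the coordinate statistics, $\pprs_i(\pi)$ counts the indices $j$ with $i<j<n$ at which the consecutive pair $\bigl(\pi(j),\pi(j+1)\bigr)$ forms an \emph{upcrossing of level $v$}, i.e.\ $\pi(j)<v<\pi(j+1)$, while $\ppwrs_i(\pi)$ counts the \emph{downcrossings}, $\pi(j+1)<v<\pi(j)$. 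Since $\pi(j)\ne v$ for every $j>i$, the bit $b_j:=\chi(\pi(j)>v)$ is well defined for all $j>i$, an upcrossing at $j$ is exactly the event $(b_j,b_{j+1})=(0,1)$ and a downcrossing is $(b_j,b_{j+1})=(1,0)$, so that $b_{j+1}-b_j$ equals $+1$ on upcrossings, $-1$ on downcrossings, and $0$ otherwise.

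Next I would telescope. For $1\le i\le n-1$,
\[
\pprs_i(\pi)-\ppwrs_i(\pi)=\sum_{j=i+1}^{n-1}\bigl(b_{j+1}-b_j\bigr)=b_n-b_{i+1}=\chi\bigl(\pi(n)>\pi(i)\bigr)-\chi\bigl(\pi(i+1)>\pi(i)\bigr),
\]
the case $i=n-1$ being the harmless empty sum $0=b_n-b_n$; and for $i=n$ there is no admissible $j$, so $\pprs_n(\pi)=\ppwrs_n(\pi)=0$. Summing over $i=1,\dots,n-1$ (the $i=n$ contribution being zero on both sides), the first sum collapses to $\#\{i\in[n]:\pi(i)<\pi(n)\}=\pi_n-1$, because precisely the $\pi_n-1$ values $1,\dots,\pi_n-1$ are smaller than $\pi_n$ and none of them sits in position $n$; the second sum is $\#\{i\in[n-1]:\pi(i)<\pi(i+1)\}=(\mathrm{\underline{12}})\,\pi$. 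Rearranging gives \eqref{id:213-231}.

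I do not expect a genuine obstacle here. The only points needing attention are that one must \emph{not} silently invoke the Fran\c con--Viennot boundary values $\pi(0)=-\infty$, $\pi(n+1)=\infty$ --- the statistics $\pprs$, $\ppwrs$, $\mathrm{\underline{12}}$ as defined involve only $\pi(1),\dots,\pi(n)$, so \eqref{id:213-231} is an intrinsic identity about $\pi$ --- and that the two boundary indices $i=n-1$ and $i=n$ in the telescoping both contribute $0$, so they need no special casework beyond a remark. (A reader preferring a pictorial phrasing may instead read $\pprs_i(\pi)-\ppwrs_i(\pi)$ as the net number of upward-minus-downward crossings of the horizontal line at height $\pi(i)$ by the broken line $j\mapsto\pi(j)$ traversed from $j=i+1$ to $j=n$, which is visibly the difference of the two endpoint indicators; this is the same computation.)
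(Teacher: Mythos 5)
Your proof is correct. It rests on the same underlying observation as the paper's argument: for each fixed $i$, the difference $\pprs_i(\pi)-\ppwrs_i(\pi)$ is a net crossing count of the level $\pi(i)$ by the suffix $\pi(i+1)\cdots\pi(n)$, hence depends only on the two boundary indicators $\chi(\pi(n)>\pi(i))$ and $\chi(\pi(i+1)>\pi(i))$; summing over $i$ then produces $\pi_n-1$ and $(\mathrm{\underline{12}})\,\pi$ exactly as you say. The only difference is in packaging: the paper extracts the same boundary data by decomposing the suffix into maximal alternating blocks $A_1,B_1,\ldots$ of letters above and below $\pi(i)$ and checking four cases, whereas your telescoping of $b_{j+1}-b_j$ reaches the conclusion in one line and with no casework --- a cleaner write-up of the same idea, and your remarks about the degenerate indices $i=n-1$, $i=n$ and about not invoking the $(-\infty,\infty)$ boundary convention are both apt.
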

\begin{proof}
For each $1\le i<n$, there is a unique way to decompose the suffix $\pi_{i+1}\cdots\pi_n$ into factors $A_1,A_2,\ldots$ and $B_1,B_2,\ldots$, such that the $A$-labeled (resp.~$B$-labeled) factors are consisted of letters larger than (resp.~smaller than) $\pi_i$. Then, one of the following four cases must occur, for a certain $k\ge 1$.
\begin{enumerate}[(I)]
    \item $\pi_{i+1}\cdots\pi_n=B_1A_1\cdots B_kA_k$;
    \item $\pi_{i+1}\cdots\pi_n=B_1A_1\cdots B_{k-1}A_{k-1}B_k$;
    \item $\pi_{i+1}\cdots\pi_n=A_1B_1\cdots A_kB_k$;
    \item $\pi_{i+1}\cdots\pi_n=A_1B_1\cdots A_{k-1}B_{k-1}A_k$.
\end{enumerate}
Now we compute the contributions to both sides of \eqref{id:213-231}. Together with two adjacent letters from neighboring $A$ and $B$ factors, $\pi_i$ contributes $1$ to either $\pprs$ or $\ppwrs$. It contributes $\chi(\pi_i<\pi_{i+1})$ to $\mathrm{\underline{12}}$ and $\chi(\pi_i<\pi_n)$ to $\pi_n-1$, respectively. So for instance, suppose we are in case (I), then the contributions to the left hand side of \eqref{id:213-231} is $k+0=k$, while to the right hand side of \eqref{id:213-231} is $k-1+1=k$ as well. In the same vein, the other three cases can be checked. Finally, $\pi_n$ contributes nothing to either side and we are done.
\end{proof}

\begin{lemma}
For each permutation $\pi=\pi_1\cdots\pi_n$, we have
\begin{align}
\label{id:all complement}
(3\underline{12}+\underline{12}3+\pprs+\underline{13}2+\underline{12})\pi+n\cdot\des\pi &-(1\underline{32}+\underline{32}1+\ppwrs+\pp31-2+2\cdot\underline{21})\pi\\
&=\frac{n(n-3)}{2}+\pi_n.\nonumber
\end{align}
\end{lemma}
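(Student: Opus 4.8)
The plan is to evaluate the two parenthesized sums on the left of \eqref{id:all complement} separately, classifying each occurrence of a vincular pattern by the adjacent pair of letters that carries its underline. First I would note that every term of the first sum $3\underline{12}+\underline{12}3+\pprs+\underline{13}2+\underline{12}$ places its underline on an \emph{adjacent ascent} of $\pi$, i.e.\ a position $k\in[n-1]$ with $\pi_k<\pi_{k+1}$. Fixing such a $k$ and letting $\ell$ range over $[n]\setminus\{k,k+1\}$, the relative order of $\pi_\ell$ among $\pi_k,\pi_{k+1}$ together with the side on which $\ell$ lies pins $\ell$ to one of six length-three order-types; a routine check matches the four patterns $3\underline{12},\underline{12}3,\pprs,\underline{13}2$ to the condition $\pi_\ell>\pi_k$, the two order-types excluded from the first sum, namely $1\underline{23}$ and $\underline{23}1$, being exactly those with $\pi_\ell<\pi_k$. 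Since the ascent $k$ itself supplies the unit counted by $\underline{12}$, and $\#\{\ell\neq k,k+1:\pi_\ell>\pi_k\}=n-\pi_k-1$ (of the $n-\pi_k$ positions carrying a value larger than $\pi_k$, the position $k+1$ is one such and is removed, while $k$ is not among them), summing over all adjacent ascents gives
\[
\bigl(3\underline{12}+\underline{12}3+\pprs+\underline{13}2+\underline{12}\bigr)\pi=\sum_{k:\ \pi_k<\pi_{k+1}}(n-\pi_k).
\]

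Next I would carry out the mirror computation for the second sum, whose terms $1\underline{32},\underline{32}1,\ppwrs,\pp31-2,\underline{21}$ all carry their underline on an \emph{adjacent descent} $k$ with $\pi_k>\pi_{k+1}$. The same six-way order analysis now matches the four patterns $1\underline{32},\underline{32}1,\ppwrs,\pp31-2$ to the condition $\pi_\ell<\pi_k$, the two excluded order-types $3\underline{21}$ and $\underline{21}3$ being those with $\pi_\ell>\pi_k$; and here $\#\{\ell\neq k,k+1:\pi_\ell<\pi_k\}=\pi_k-2$. Together with the coefficient $2$ on $\underline{21}$, which contributes $2$ per descent, this yields
\[
\bigl(1\underline{32}+\underline{32}1+\ppwrs+\pp31-2+2\cdot\underline{21}\bigr)\pi=\sum_{k:\ \pi_k>\pi_{k+1}}\pi_k.
\]

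The assembly is then immediate: each $k\in[n-1]$ is either an adjacent ascent or an adjacent descent of $\pi$, and $n\cdot\des\pi=\sum_{k:\ \pi_k>\pi_{k+1}}n$. Hence the left side of \eqref{id:all complement}, being the first displayed sum plus $n\cdot\des\pi$ minus the second, equals
\[
\sum_{k:\ \pi_k<\pi_{k+1}}(n-\pi_k)+\sum_{k:\ \pi_k>\pi_{k+1}}(n-\pi_k)=\sum_{k=1}^{n-1}(n-\pi_k)=n(n-1)-\Bigl(\frac{n(n+1)}{2}-\pi_n\Bigr)=\frac{n(n-3)}{2}+\pi_n,
\]
exactly the claimed value.

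I expect the one delicate point to be the bookkeeping inside the two order analyses: for a fixed adjacent ascent (respectively descent) and a third letter one must list all six relative orders, match each with its vincular pattern, and check that precisely the four patterns appearing in the target bracket survive while the remaining two do not. Everything after that small table is the elementary telescoping above. (Alternatively, \eqref{id:all complement} can be obtained by combining \eqref{id:213-231} with the behaviour of $\makl$ under the complement $\pi\mapsto\pi^{\cc}$; but the direct count keeps the lemma self-contained, so that is the version I would write up.)
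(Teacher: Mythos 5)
Your proof is correct and takes essentially the same route as the paper: both arguments are the same double count of $\sum_{1\le k<n}(n-\pi_k)$, splitting the positions $k$ into adjacent ascents (where the first bracket contributes $n-\pi_k$, with $\pi_k$ playing the smallest role in each pattern) and adjacent descents (where $n\cdot\des\pi$ minus the second bracket contributes $n-\pi_k$). You merely spell out the six-way order classification and the counts $n-\pi_k-1$ and $\pi_k-2$ that the paper leaves implicit.
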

\begin{proof}
The right hand side reads
$$n(n-3)/2+\pi_n=n(n-1)/2-(n-\pi_n)=\sum_{1\le i<n}(n-\pi_i).$$
For the left hand side, we observe that for a fixed $\pi_i$, $1\le i<n$, if $\pi_i<\pi_{i+1}$, then the contribution of $n-\pi_i$ is distributed among the summands of
$$(3\underline{12}+\underline{12}3+\pprs+\underline{13}2+\underline{12})\pi,$$
where $\pi_i$ always plays the role of $1$; if $\pi_i>\pi_{i+1}$, then the contribution of $n-\pi_i$ is accounted by the remaining terms
$$n\cdot\des\pi -(1\underline{32}+\underline{32}1+\ppwrs+\pp31-2+2\cdot\underline{21})\pi.$$
Thus \eqref{id:all complement} is established via double counting.
\end{proof}

\begin{proof}[Proof of Theorem~\ref{thm:Mahon stats}]
For \eqref{mad=sist}, we express both $\mad'$ and $\sist''$ using vincular patterns and apply \eqref{id:213-231} to deduce that (also note that $(\underline{21})\pi+(\underline{12})\pi=n-1$.)
\begin{align*}
\mad'(\pi) &= (2\cdot\ppwrs+\pp31-2+\mathrm{\underline{21}})\pi+2\pi_n-n-1\\
&= 2((\pprs+\mathrm{\underline{12}})\pi)+(\pp31-2+\mathrm{\underline{21}})\pi-n+1\\
&= (2\cdot\pprs+\pp31-2+\mathrm{\underline{12}})\pi\\
&= \sist''(\pi^{\cc}).
\end{align*}
In view of $\madl(\pi)=\mad((\pi^{\r})^{\cc})$, the derivation of \eqref{madl=sist} goes similarly and also relies on \eqref{id:213-231}. Next, for \eqref{makl'=makl^c}, we express both $\makl$ and $\makl'$ in terms of vincular patterns, and utilize both \eqref{id:all complement} and \eqref{id:213-231} to deduce that
\begin{align*}
\makl(\pi^{\cc}) &= (3\underline{12}+\underline{12}3+\underline{13}2+\underline{12})\pi\\
&= \frac{n(n-3)}{2}+\pi_n-n\cdot\des\pi-(\pprs)\pi+(1\underline{32}+\underline{32}1+\ppwrs+\pp31-2+2\cdot\underline{21})\pi\\
&= \frac{n(n-1)}{2}-n\cdot\des\pi+(1\underline{32}+\underline{32}1+\pp31-2+\underline{21})\pi\\
&= \makl'(\pi).
\end{align*}
\end{proof}

\begin{table}
{\small
\begin{tabular*}{5in}{ccc}
\toprule
statistic & definition & comment\\
\midrule
$\mak'(\pi)$ & $\mak(\pi)+(1-n)\des(\pi)+\pi_n+n(n-3)/2$ & $n=4$\\
&&\\
$\mad'(\pi)$ & $\mad(\pi)+2\pi_n-n-1$ & \eqref{mad=sist}\\
&&\\
$\makl'(\pi)$ & $\makl(\pi)-n\cdot\des(\pi)+n(n-1)/2$ & \eqref{makl'=makl^c}\\
&&\\
$\madl'(\pi)$ & $\madl(\pi)-\des(\pi)+\pi_n-1$ & \eqref{madl=sist}\\
&&\\
$\fzthree(\pi)$ & $\ebot(\pi)+\edif(\pi)-\exc(\pi)-\ine(\pi)$ & $n=3$\\
&&\\
$\fzfour(\pi)$ & $2\edif(\pi)-\exc(\pi)-\ine(\pi)$ & $n=4$\\
&&\\
$\inv'(\pi)$ & $\inv(\pi)+2\pi_n-1-n$ & $n=3$\\
&&\\
$\den'(\pi)$ & $\den(\pi)+(1-n)\exc(\pi)+\pi_n+n(n-3)/2$ & $n=3$\\
&&\\
$\fzthree'(\pi)$ & $\fzthree(\pi)-n\cdot\exc(\pi)+n(n-1)/2$ & $n=3$\\
&&\\
$\fzfour'(\pi)$ & $\fzfour(\pi) -\exc(\pi) +\pi_n-1$ & $n=3$\\
&&\\
$\yzlone(\pi)$ & $\vbot(\pi) +\vnest(\pi)$ & \eqref{yzl1=den}\\
&&\\
$\yzltwo(\pi)$ & $\vedif(\pi) +\vnest(\pi)$ & \eqref{yzl1=den}\\
&&\\
$\yzlthree(\pi)$ & $\vbot(\pi)+\vedif(\pi)-(n-2-\exc(\pi))-\vnest(\pi)$ & \eqref{yzl1=den}\\
&&\\
$\yzlfour(\pi)$ & $2\vedif(\pi)-(n-2-\exc(\pi))-\vnest(\pi)$ & \eqref{yzl1=den}\\
&&\\
$\yzlone'(\pi)$ & $\yzlone(\pi)+(n-1)\exc(\pi)+\pone(\pi)+(-n^2+n-2)/2$ & \eqref{yzl1=den}\\
&&\\
$\yzltwo'(\pi)$ & $\yzltwo(\pi)+2\pone(\pi)-n-1$ & \eqref{yzl1=den}\\
&&\\
$\yzlthree'(\pi)$ & $\yzlthree(\pi)+n\cdot\exc(\pi)-n(n-1)/2$ & \eqref{yzl1=den}\\
&&\\
$\yzlfour'(\pi)$ & $\yzlfour(\pi)+\exc(\pi)+\pone(\pi)-n$ & \eqref{yzl1=den}\\
\bottomrule\\
\end{tabular*}
}
\caption{Eighteen Mahonian statistics defined for $\pi=\pi_1\cdots\pi_n\in\SS_n$}\label{Mahonian stats}
\end{table}

\begin{remark}
As an indication of how one might prove individually a certain statistic listed in Table~\ref{Mahonian stats} is Mahonian, we sketch here a proof that $\inv'$ is Mahonian. Since Mahonian statistics must have the distribution over $\SS_n$ given by $(1+q)(1+q+q^2)\cdots(1+q+\cdots+q^{n-1})$, it suffices to explain the extra factor $(1+q+\cdots+q^{n-1})$ when one generates a permutation $\sigma\in\SS_n$ by inserting the letter $n$ into a permutation $\pi\in\SS_{n-1}$. Since the definition of $\inv'$ involves the last entry of the permutation, we consider two cases separately. Firstly, if $\sigma$ ends with $n$, then $\inv'(\sigma)=\inv(\sigma)+2n-1-n=\inv(\pi)+n-1$. And the subset containing all such $\sigma$'s is clearly in bijection with $\SS_{n-1}$, thus we have
\begin{align*}
\sum_{\sigma\in\SS_n,~\sigma(n)=n}q^{\inv'(\sigma)}&=\sum_{\pi\in\SS_{n-1}}q^{\inv(\pi)+n-1}=q^{n-1}(1+q)(1+q+q^2)\cdots(1+q+\cdots+q^{n-2}).
\end{align*}
Otherwise $\sigma(n)\neq n$, there are $n-1$ possible slots to insert $n$ in $\pi$ that will result in such a $\sigma$. We label them as follows:
$${}_{n-2}\pi(1)_{n-3}\pi(2)_{n-4}\cdots_{1}\pi(n-2)_0\pi(n-1).$$
Now it is easy to check that inserting $n$ at slot $j$ leads to $\inv'(\sigma)=\inv'(\pi)+j$, for every $0\le j\le n-2$. Combining two cases finishes the proof.
\end{remark}

As for the eight Mahonian statistics associated with the mapping $\Phi_{\YZL}$, our computation suggests the following relation between them and those associated with the mapping $\Phi_{\FZ}$. For each permutation $\pi\in\SS_n$, we have
\begin{align}
\label{yzl1=den}
(\yzlone,\yzltwo,\yzlthree,\yzlfour,\yzlone',\yzltwo',\yzlthree',\yzlfour')\:\pi &= (\den',\inv',\fzthree',\fzfour',\den,\inv,\fzthree,\fzfour)\:\pi^{\rci},
\end{align}
where $\pi^{\rci}:=\r\circ\cc\circ\i(\pi)$.

In seeking a proof of \eqref{yzl1=den}, we are led to the next theorem, which reveals further (and somewhat surprising) relations among the mappings considered in this paper. We introduce two mappings, defined for a permutation $\pi=\pi(1)\pi(2)\cdots\pi(n)$. The first mapping is ``almost'' the composition $\r\circ\cc$:
\begin{align*}
\theta(\pi):=\theta_1\cdots\theta_n,\text{ where } \theta_n=n+1-\pi(n),~\theta_i=n+1-\pi(n-i),\text{ for } 1\le i<n,
\end{align*}
while the second mapping
$$\varkappa(\pi):= \pi^{-1}(2)\cdots\pi^{-1}(n)\pi^{-1}(1)$$
is usually referred to as the {\it Kreweras complement} \cite{Kre}; see also \cite[Chap.~4.2]{Arm} and \cite{SU}.

\begin{theorem}\label{thm:yzl family}
For each permutation $\pi$, we have
\begin{align}\label{id:xi and theta}
&\Phi^{-1}_{\FZ}\circ \xi\circ \Phi_{\FZ}(\pi)=\theta(\pi),\\
&\Phi_{\YZL}(\pi)=\Phi_{\FZ}\circ\theta(\pi^{\rci})=\Phi_{\FZ}\circ\varkappa(\pi).
\label{id:Kreweras}
\end{align}
\end{theorem}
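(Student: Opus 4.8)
The plan is to establish the three displayed identities in stages. Note first that \eqref{id:Kreweras} contains an elementary reduction: a direct computation gives $\pi^{\rci}(i)=n+1-\pi^{-1}(n+1-i)$ for all $i$, and substituting this into the definition of $\theta$ yields $\theta(\pi^{\rci})=\pi^{-1}(2)\cdots\pi^{-1}(n)\,\pi^{-1}(1)=\varkappa(\pi)$; hence $\theta\circ\rci=\varkappa$ as maps $\SS_n\to\SS_n$, so the two right-hand sides of \eqref{id:Kreweras} are literally identical once its first equality is known. It therefore remains to prove \eqref{id:xi and theta}, i.e. $\Phi_{\FZ}^{-1}\circ\xi\circ\Phi_{\FZ}=\theta$, and the first half of \eqref{id:Kreweras}, i.e. $\Phi_{\YZL}=\Phi_{\FZ}\circ\varkappa$.

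\textbf{Proof of \eqref{id:xi and theta}.} Since $\Phi_{\FZ}$ is a bijection, this is equivalent to $\Phi_{\FZ}(\theta(\pi))=\xi(\Phi_{\FZ}(\pi))$, which I would prove by verifying that $V:=\Phi_{\FZ}(\theta(\pi))$ satisfies the four defining conditions of Theorem~\ref{thm:inv-xsi} relative to $W:=\Phi_{\FZ}(\pi)$. Writing $m=\pi(n)=\cs(W)$ (Proposition~\ref{prop:FZ}), condition (1) is immediate from $\cs(V)=\theta(\pi)(n)=n+1-\pi(n)$. For condition (2) one first reads off from the case definitions of $\Phi_{\FZ}$ that the $j$-th step of $\Phi_{\FZ}(\sigma)$ has type $\NE$ precisely when $\sigma^{-1}(j)\ge j$; combining this with the identity $\theta(\pi)^{-1}(j)=n-\pi^{-1}(n+1-j)$ (valid for $j\neq n+1-m$) gives $\theta(\pi)^{-1}(j)\ge j\iff\pi^{-1}(n+1-j)<n+1-j$, which is exactly $v_j=\NE\iff w_{n+1-j}=\SdE$. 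Conditions (3) and (4) are then height and weight bookkeeping: heights are determined by the step types just matched together with the critical-step detour in the rule for step $n+1-m$, which one checks against Proposition~\ref{prop:FZ}; and the weights of $V$, being the side numbers of $\theta(\pi)$ shifted by $\chi(\cdot=\SdE)$, satisfy $b_j=g_j-h_{n+1-j}+c_{n+1-j}$ because the two ingredients of $\theta$ — reversing the first $n-1$ letters and complementing the values (two commuting involutions, so $\theta$ is itself an involution, consistent with $\xi$) — interchange the inversion-top-number and inversion-bottom-number counts while reflecting positions.

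\textbf{Proof of $\Phi_{\YZL}=\Phi_{\FZ}\circ\varkappa$.} Equivalently $\Phi_{\FZ}(\varkappa(\pi))=\Phi_{\YZL}(\pi)$. From $\varkappa(\pi)(i)=\pi^{-1}(i+1)$ for $i<n$ and $\varkappa(\pi)(n)=\pone(\pi)$ one computes $\varkappa(\pi)^{-1}(i)=\pi(i)-1$ for $i\neq\pone(\pi)$ and $\varkappa(\pi)^{-1}(\pone(\pi))=n$. Plugging these into the cyclic conditions $\Cval,\Cpk,\Cda,\Cdd$ that govern the step types of $\Phi_{\FZ}(\varkappa(\pi))$ and comparing with the shifted-cyclic conditions $\Scval,\Scpk,\Scda,\Scdd$ that govern the step types of $\Phi_{\YZL}(\pi)$, a short case analysis shows the two step-type sequences agree at every $i\notin\{\pone(\pi),n\}$; at the two exceptional positions the tailored rules in the definition of $\Phi_{\YZL}$ reproduce exactly the $\Phi_{\FZ}(\varkappa(\pi))$-outcomes, and in particular $\cs(\Phi_{\FZ}(\varkappa(\pi)))=\varkappa(\pi)(n)=\pone(\pi)=\cs(\Phi_{\YZL}(\pi))$, matching Lemma~\ref{lem:cspone}. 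It then remains to match the choice vectors, i.e. to show that $\vnest_i(\pi)$ equals the side number of the \emph{value} $i$ in $\varkappa(\pi)$ — its inversion bottom number if $i\in\Exc(\varkappa(\pi))$ and its inversion top number otherwise — which I would verify using the arc correspondences of Figures~\ref{fig:fourtype} and~\ref{fig:fourtype2}, reading a nesting in the permutation diagram of $\pi$ as an inversion inside the excedance or non-excedance subword of $\varkappa(\pi)$.

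\textbf{Main obstacle.} The step-type matchings are pure case analysis and the height computations are routine; the crux is the last weight identity $\vnest_i(\pi)=$ (side number of $i$ in $\varkappa(\pi)$), which is the precise sense in which Kreweras complementation converts the shifted-nesting reading of the choice vector used by $\Phi_{\YZL}$ into the Foata--Zeilberger inversion reading. The delicate part there is making the $\pm1$ corrections of \eqref{def:vnest} around $\pone(\pi)$ agree with the side-number count near position $n$. Trying instead to deduce this half of the theorem from \eqref{id:xi and theta} together with $\varkappa=\theta\circ\rci$ only rephrases the goal as $\Phi_{\YZL}=\xi\circ\Phi_{\FZ}\circ\rci$, which still forces a step-by-step comparison of $\Phi_{\YZL}$ with a modification of $\Phi_{\FZ}$, so the weight computation above seems genuinely necessary.
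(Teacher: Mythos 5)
Your proposal follows the paper's proof essentially step for step: the paper likewise reduces the second equality of \eqref{id:Kreweras} to $\theta\circ\rci=\varkappa$, proves \eqref{id:xi and theta} by verifying the four conditions of Theorem~\ref{thm:inv-xsi} for $\Phi_{\FZ}(\theta(\pi))$ (using the same characterization of $\NE$ steps via $\sigma^{-1}(j)\ge j$ and the involutivity of $\theta$ for the converse direction), and proves $\Phi_{\YZL}=\Phi_{\FZ}\circ\varkappa$ by first matching step types through $\varkappa(\pi)^{-1}(i)=\pi(i)-1$ and then establishing $\vnest_i(\pi)=s_{\varkappa(\pi)^{-1}(i)}$ via exactly the nesting-to-inversion reading you single out as the crux. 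The only difference is one of detail: where you label conditions (3) and (4) as ``bookkeeping'', the paper expresses the heights as differences of excedance and non-excedance counts and records an explicit four-branch side-number relation before running the resulting case analyses, so be aware that this part is longer than your sketch suggests, though your approach to it is the right one.
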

Combining \eqref{id:xi and theta} and \eqref{id:Kreweras}, we see that
$$\Phi_{\YZL}(\pi)=\xi\circ\Phi_{\FZ}(\pi^{\rci}),$$
which immediately yields \eqref{yzl1=den}. The proof of Theorem \ref{thm:yzl family} is a bit onerous so we decide to put it in the appendix.

Comparing to the classic mappings $\Phi_{\FV}$ and $\Phi_{\FZ}$, the mapping $\Psi_{\YZL}$ \cite{YZL}, as well as its variant $\Phi_{\YZL}$ introduced in this paper are quite recent. The work of Han-Mao-Zeng \cite{HMZ} indicates that $\Psi_{\YZL}$ actually fits well with $\Psi_{\FV}$ by placing $\Psi_{\YZL}$ in the first triangle in Fig.~\ref{two triangle}, where the mapping $\Psi_{\SZ}$ was first constructed by Shin-Zeng \cite{SZ12}. Viewing these two factorizations in Fig.~\ref{two triangle}, one wonders if a similar factorization could be carried out for $\Phi_{\FZ}$ and $\Phi_{\YZL}$.\footnote{Here one should consider our variant $\Phi_{\YZL}$ rather than the original $\Psi_{\YZL}$, since the image should be $\LH_n$, not $\mathtt{L}_n$.} The second equality in \eqref{id:Kreweras} makes such factorization explicit, and we are rewarded with a third triangle in Fig.~\ref{3rd triangle}.

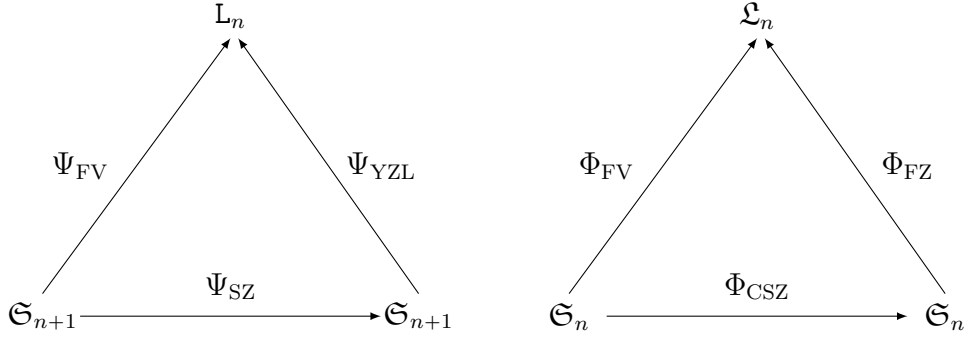
\begin{figure}[htb]
\begin{tikzpicture}[scale=1]
\draw(0,0) node{$\SS_{n+1}$};
\draw(5,0) node{$\SS_{n+1}$};
\draw(2.5,4) node{$\mathtt{L}_n$};

\draw(7,0) node{$\SS_{n}$};
\draw(12,0) node{$\SS_{n}$};
\draw(9.5,4) node{$\LH_n$};

\draw(.5,2) node{$\Psi_{\FV}$};
\draw(4.5,2) node{$\Psi_{\YZL}$};
\draw(2.5,0.4) node{$\Psi_{\SZ}$};

\draw(7.5,2) node{$\Phi_{\FV}$};
\draw(11.5,2) node{$\Phi_{\FZ}$};
\draw(9.5,0.4) node{$\Phi_{\CSZ}$};

\draw[-latex] (0,0.3)->(2.5,3.7);
\draw[-latex] (0.5,0)->(4.5,0);
\draw[-latex] (5,0.3)->(2.6,3.7);
\draw[-latex] (7,0.3)->(9.5,3.7);
\draw[-latex] (7.5,0)->(11.5,0);
\draw[-latex] (12,0.3)->(9.6,3.7);
\end{tikzpicture}
\caption{Two factorizations: $\Psi_{\FV}=\Psi_{\YZL}\circ\Psi_{\SZ}$ and $\Phi_{\FV}=\Phi_{\FZ}\circ\Phi_{\CSZ}$. \label{two triangle}}
\end{figure}

\begin{figure}[htb]
\begin{tikzpicture}[scale=1]
\draw(0,0) node{$\SS_{n}$};
\draw(5,0) node{$\SS_{n}$};
\draw(2.5,4) node{$\LH_n$};

\draw(.5,2) node{$\Phi_{\YZL}$};
\draw(4.5,2) node{$\Phi_{\FZ}$};
\draw(2.5,0.4) node{$\varkappa$};

\draw[-latex] (0,0.3)->(2.5,3.7);
\draw[-latex] (0.5,0)->(4.5,0);
\draw[-latex] (5,0.3)->(2.6,3.7);
\end{tikzpicture}
\caption{A third factorization: $\Phi_{\YZL}=\Phi_{\FZ}\circ\varkappa$. \label{3rd triangle}}
\end{figure}
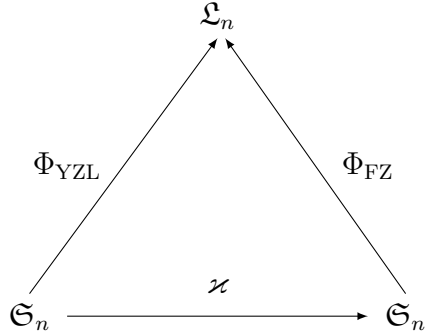

\appendix
\section{The proof of Theorem~\ref{thm:yzl family}}\label{sec:appendix}
\begin{proof}[Proof of \eqref{id:xi and theta}]

We aim to give the proof by showing  that $\Phi_{\FZ} \circ \theta \circ \Phi_{\FZ}^{-1}(W)=\xi(W)$ for any $W\in \LH_n$.
Assume that $\Phi_{\FZ}^{-1}(W)=\pi$, $\sigma=\theta(\pi)$ and $V=\Phi_{\FZ}(\sigma)$ with $W=(w,h,c)$ and $V=(v,g,b)$.
It suffices to verify that $\Phi_{\FZ} \circ \theta \circ \Phi_{\FZ}^{-1}(W)=V$ satisfies conditions $(1) - (4)$ in Theorem \ref{thm:inv-xsi}.

Condition $(1)$ follows directly from Proposition
\ref{prop:FZ} and the definition of $\theta$.  To confirm condition $(2)$,  we assume that $j \in [n]$ and $j \neq \cs(W)$.
If $w_j=\NE$, then $\pi^{-1}(j) \geq j$. By the definition of $\theta$,
we have
\[ \sigma^{-1}(n+1-j) = n-\pi^{-1}(j) \leq n-j <n+1-j.\]
This indicates that $n+1-j \in \Cda(\sigma)$ or $\Cpk(\sigma)$,
namely, $v_{n+1-j}=\SdE$. Notice that $\theta$ is an involution, and hence $\Phi_{\FZ} \circ \theta \circ \Phi_{\FZ}^{-1}$ is an involution.
Based on this, 
we deduce that if $w_j=\SdE$, then $v_{n+1-j}=\NE$ and condition $(2)$
follows.
For condition $(3)$, firstly, we wish to explore in two ways what corresponds to $h_i$ under the map $\Phi_{\FZ}$:
\begin{align*}
  h_i & =\#\{j<i: w_j=\N\}- \#\{j<i: w_j=\S\}\\[3pt]
    & =\#\{j<i: j \in \Cval(\pi)\}- \#\{j<i: j \in \Cpk(\pi)\}\\[3pt]
     & =\#\{j<i: j \in \Cval(\pi)\}+\#\{j<i: j \in \Cdd(\pi)\}-\#\{j<i: j \in \Cdd(\pi)\}\\[3pt]
     &\,\,\,\,\,\,\,\,\,\,\,\,\,\,\,- \#\{j<i: j \in \Cpk(\pi)\}\\
     & =\#\{j<i: j \in \Nexc(\pi)\}-\#\{j<i: j \in \Nexcp(\pi)\},
\end{align*}
or
\begin{align*}
     h_i  & =\#\{j<i: j \in \Cval(\pi)\}+\#\{j<i: j \in \Cda(\pi)\}-\#\{j<i: j \in \Cda(\pi)\}\\[3pt]
     &\,\,\,\,\,\,\,\,\,\,\,\,\,\,\,- \#\{j<i: j \in \Cpk(\pi)\}\\
     & =\#\{j<i: j \in \Excp(\pi)\}-\#\{j<i: j \in \Exc(\pi)\}.
\end{align*}
Secondly, we investigate what happens to the above interpretation under the map $\theta$.
\begin{align*}
     g_i  & =\#\{j<i: j \in \Excp(\sigma)\}-\#\{j<i: j \in \Exc(\sigma)\}\\[3pt]
     & =\#\{j<i: j \in \Excp(\sigma)\}-\#\{j\leq i: j \in \Exc(\sigma)\}
     +\chi(i \in \Exc(\sigma))\\[3pt]
     &=\#\{n-j \geq n-i+1: n-j \in \Nexcp(\pi)\setminus \{n\}\}\\[3pt]
   &\,\,\,\,\,\,\,\,\,\,\,\,\,\,\,-\#\{n-j+1 \geq n-i+1: n-j+1 \in \Nexc(\pi)\setminus \{\pi_n\}\}
    +\chi(i \in \Exc(\sigma))\\[3pt]
   &=   \nep(\pi) -1-\#\{n-j < n-i+1: n-j \in \Nexcp(\pi)\setminus \{n\}\}-\nexc(\pi)+1 \\[3pt]
   &\,\,\,\,\,\,\,\,\,\,\,\,\,\,\,+\#\{n-j+1 < n-i+1: n-j+1 \in \Nexc(\pi)\setminus \{\pi_n\}\} + \chi(i \in \Exc(\sigma))   \\[3pt]
   &=  \#\{j \le n-i: j \in \Nexc(\pi)\setminus \{\pi_n\}\} -\#\{j \le n-i: j \in \Nexcp(\pi)\setminus \{n\}\} + \chi(i \in \Exc(\sigma)).
\end{align*}

Notice that $h_{n+1-i}=\#\{j\le n-i: j \in \Nexc(\pi)\}-\#\{j\le n-i: j \in \Nexcp(\pi)\}$ and $i \in \Exc(\sigma)$ if and only if $v_i=\SdE$, we consider four cases:
\begin{itemize}
  \item [I.] If $v_i=\SdE$ and $i<n+1-\pi_n$, then $\chi(i \in \Exc(\sigma))=1$ and $n-i\ge \pi_n$. Since  $\pi_n \in \Nexc(\pi)$, we have $g_i=h_{n+1-i}-1+1=h_{n+1-i}$.
  \item [II.]If $v_i=\SdE$ and $i>n+1-\pi_n$, then $\chi(i \in \Exc(\sigma))=1$ and $n+1-i<\pi_n$. It follows that 
  $g_i=h_{n+1-i}+1$.
  \item [III.]If $v_i=\NE$ and $i<n+1-\pi_n$, then $\chi(i \in \Exc(\sigma))=0$ and $n-i\ge \pi_n$. It follows that
  $g_i=h_{n+1-i}-1$.
  \item [IV.]If $v_i=\NE$ and $i \geq n+1-\pi_n$, then $\chi(i \in \Exc(\sigma))=0$ and $n+1-i \leq \pi_n$. Thus, we have 
  $g_i=h_{n+1-i}$.
\end{itemize}
Combining the above cases, condition $(3)$ is verified.
As for condition $(4)$, we assume that $s(\pi)=s_1(\pi)s_2(\pi)\cdots s_n(\pi)$ is the side number of $\pi$. By the definition of $\theta$, it is easy to check that

\begin{equation}\label{eq:srelation}
s_{\sigma^{-1}(i)}(\sigma)= \left\{
  \begin{array}{ll}
s_{\pi^{-1}(n+1-i)}(\pi), & \mbox{ $ i>n+1-\pi_n,\, v_i=\SdE$,}\\[3pt]
s_{\pi^{-1}(n+1-i)}(\pi)-1, & \mbox{ $ i<n+1-\pi_n,\, v_i=\SdE$,}\\[3pt]
s_{\pi^{-1}(n+1-i)}(\pi)+1, & \mbox{ $ i>n+1-\pi_n,\, v_i=\NE$,}\\[3pt]
s_{\pi^{-1}(n+1-i)}(\pi),  & \mbox{ $ i\leq n+1-\pi_n,\, v_i=\NE$.}
 \end{array} \right.
\end{equation}

In the following, we consider five cases.
\begin{itemize}
  \item [I.] If $v_i=\SdE$ and $i<n+1-\pi_n$, then $w_{n+1-i}=\NE$ and by condition $(3)$ we have $g_i=h_{n+1-i}$. Notice that $b_i=s_{\sigma^{-1}(i)}(\sigma)+1$ and  $c_{n+1-i}=s_{\pi^{-1}(n+1-i)}(\pi)$. In view of (\ref{eq:srelation}), we deduce that $b_i=c_{n+1-i}$.
      It follows that $b_i=g_i-h_{n+1-i}+c_{n+1-i}$.
      
  \item [II.]If $v_i=\SdE$ and $i>n+1-\pi_n$, then $w_{n+1-i}=\NE$ and $g_i=h_{n+1-i}+1$. Since $b_i=s_{\sigma^{-1}(i)}(\sigma)+1$ and  $c_{n+1-i}=s_{\pi^{-1}(n+1-i)}(\pi)$, we deduce that $b_i=c_{n+1-i}+1$ by (\ref{eq:srelation}). Thus,  $b_i=g_i-h_{n+1-i}+c_{n+1-i}$ follows.
      
  \item [III.]If $v_i=\NE$ and $i<n+1-\pi_n$, then $w_{n+1-i}=\SdE$ and  $g_i=h_{n+1-i}-1$. Since $b_i=s_{\sigma^{-1}(i)}(\sigma)$, $c_{n+1-i}=s_{\pi^{-1}(n+1-i)}(\pi)+1$, and in view of (\ref{eq:srelation}), we have $b_i=c_{n+1-i}-1$. Thus, $b_i=g_i-h_{n+1-i}+c_{n+1-i}$.
      
  \item [IV.]If $v_i=\NE$ and $i > n+1-\pi_n$, then $w_{n+1-i}=\SdE$  and  $g_i=h_{n+1-i}$. Since $b_i=s_{\sigma^{-1}(i)}(\sigma)$, $c_{n+1-i}=s_{\pi^{-1}(n+1-i)}(\pi)+1$, and in view of (\ref{eq:srelation}), we deduce that $b_i=c_{n+1-i}$, and hence $b_i=g_i-h_{n+1-i}+c_{n+1-i}$.
      
  \item [V.]If $v_i=\NE$ and $i = n+1-\pi_n$, then $w_{n+1-i}=\NE$  and  $g_i=h_{n+1-i}$. Since $b_i=s_{\sigma^{-1}(i)}(\sigma)$, $c_{n+1-i}=s_{\pi^{-1}(n+1-i)}(\pi)$, and in view of (\ref{eq:srelation}), we deduce that $b_i=c_{n+1-i}$. Thus, we also have $b_i=g_i-h_{n+1-i}+c_{n+1-i}$.
\end{itemize}
Combining all the cases, we confirm condition $(4)$ and the proof is now completed.
\end{proof}

\begin{proof}[Proof of \eqref{id:Kreweras}]
The fact that $\theta(\pi^{\rci})=\varkappa(\pi)$ is readily verified using definitions. It suffices then to show that $\Phi_{\YZL}(\pi)=\Phi_{\FZ}\circ\varkappa(\pi)$. Suppose $W=(w,h,c)=\Phi_{\YZL}(\pi)$ and $V=(v,g,b)=\Phi_{\FZ}\circ\varkappa(\pi)$. We aim to show that $w=v$ and $c=b$. For a fixed $w_i$, $1\le i\le n$, we consider the following four cases according to the type of this $i$-th step. The following two-line notation of $\varkappa(\pi)$ will be convenient as we determine $v_i$:
\begin{align*}
\varkappa(\pi)=\begin{pmatrix}
1 & 2 & \cdots & n-1 & n\\
\pi^{-1}(2) & \pi^{-1}(3) & \cdots & \pi^{-1}(n) & \pi^{-1}(1)
\end{pmatrix}.
\end{align*}
\begin{enumerate}[(I)]
    \item $w_i=\N$, then $i<n$. If $i\neq \pone(\pi)$, then $i\in\Scval(\pi)$, i.e., $i<\pi(i)$ and $i+1\le \pi^{-1}(i+1)$, which are equivalent to
    $$i\le\pi(i)-1,\text{ and } i<\pi^{-1}(i+1).$$
    This implies that $i\in\Cval(\varkappa(\pi))$, thus $v_i=\N$ as well. Otherwise $i=\pone(\pi)$, in other words $i=\pi^{-1}(1)$, and $i+1\le\pi^{-1}(i+1)$ (since $w_i=\N$), or equivalently $i<\pi^{-1}(i+1)$. So we still have $i\in\Cval(\varkappa(\pi))$, hence $v_i=\N$.
    \item $w_i=\S$. If $i<n$, then $i\neq\pone(\pi)$, we see that $i\in\Scpk(\pi)$, meaning $i\ge\pi(i)$ and $i+1>\pi^{-1}(i+1)\neq i$. Equivalently, we have
    $$i>\pi(i)-1 \text{ and } i>\pi^{-1}(i+1),$$
    which implies that $i\in\Cpk(\varkappa(\pi))$ and so $v_i=\S$. Otherwise $i=n\neq\pone(\pi)$, we still have $v_i=\S$.
    \item $w_i=\E$. If $i<n$ and $i\neq\pone(\pi)$, then we have $i\in\Scda(\pi)$, i.e., $i<\pi(i)$ and $i+1>\pi^{-1}(i+1)$. Alternatively, we see that
    $$i\le\pi(i)-1 \text{ and } i\ge \pi^{-1}(i+1).$$
    Consequently, $i\in\Cdd(\varkappa(\pi))$ and $v_i=\E$. The other two cases $i=\pone(\pi)<n$ and $i=\pone(\pi)=n$ can be similarly checked, and in all cases we have $v_i=\E$.
    \item $w_i=\dE$. We must have $i<n$, $i\neq\pone(\pi)$, and $i\in\Scdd(\pi)$, implying that $i\ge\pi(i)$ and $i+1\le\pi^{-1}(i+1)$. Equivalently, we see that
    $$i>\pi(i)-1 \text{ and } i<\pi^{-1}(i+1),$$
    which leads to $i\in\Cda(\varkappa(\pi))$ and thus $v_i=\dE$, as desired.
\end{enumerate}
It remains to show that $b_i=c_i$ for $1\le i\le n$. Recall that $c_i=\vnest_i(\pi)+\chi(w_i=\SdE)$ and $b_i=s_{\varkappa(\pi)^{-1}(i)}+\chi(v_i=\SdE)$. So it suffices to show that $\vnest_i(\pi)=s_{\varkappa(\pi)^{-1}(i)}$.

If $i=\pone(\pi)$, we have $w_i=\NE$, $\pi(i)=1$, and $\varkappa(\pi)^{-1}(i)=n$, hence we deduce that
$$\vnest_i(\pi)=\nest_i(\pi)=0=s_n.$$
Otherwise we can assume that $i\neq\pone(\pi)$, in which case $\varkappa(\pi)^{-1}(i)=\pi(i)-1$. Viewing the relation \eqref{def:vnest} between $\vnest_i(\pi)$ and $\nest_i(\pi)$, there are four subcases to consider.
\begin{enumerate}[(i)]
    \item $\pi(i)\le i$ and $i<\pone(\pi)$. We want to show that $\nest_i(\pi)-1=s_{\pi(i)-1}$. Note that $\pi(i)-1<i$ implies $\pi(i)-1\in\Excp(\varkappa(\pi))$, so the contribution to the side number $s_{\pi(i)-1}$ comes from all those $k$, such that $k>i$, $k>\pi(k)-1$, and $\pi(k)-1<\pi(i)-1$; see the two-line notation below.
    \begin{align*}
    \varkappa(\pi)=\begin{pmatrix}
    \cdots & \pi(k)-1 & \cdots & \pi(i)-1 & \cdots & n\\
    \cdots & k & \cdots & i & \cdots & \pi^{-1}(1)
    \end{pmatrix}.
    \end{align*}
    On the other hand, the quadruple $(\pi(k),\pi(i),i,k)$ also constitutes a nesting in $\pi$ that is counted by $\nest_i(\pi)$. The term $-1$ accounts for the fact that $(1,\pi(i),i,\pone(\pi))$ contributes to $\nest_i(\pi)$ but not to $s_{\pi(i)-1}$.
    \item $\pi(i)\le i$ and $i>\pone(\pi)$. We want to show that $\nest_i(\pi)=s_{\pi(i)-1}$. The argument is similar to (i).
    \item $\pi(i)>i$ and $i>\pone(\pi)$. We want to show that $\nest_i(\pi)+1=s_{\pi(i)-1}$. Note that $\pi(i)-1\ge i$ implies $\pi(i)-1\not\in\Excp(\varkappa(\pi))$, so the contribution to the side number $s_{\pi(i)-1}$ comes from all those $k$, such that $k<i$, $k\le \pi(k)-1$, and $\pi(k)-1>\pi(i)-1$, or from $\pi^{-1}(1)=\pone(\pi)$ since $i>\pone(\pi)$; see the two-line notation below.
    \begin{align*}
    \varkappa(\pi)=\begin{pmatrix}
    \cdots & \pi(i)-1 & \cdots & \pi(k)-1 & \cdots & n\\
    \cdots & i & \cdots & k & \cdots & \pi^{-1}(1)
    \end{pmatrix}.
    \end{align*}
    On the other hand, the quadruple $(k,i,\pi(i),\pi(k))$ also constitutes a nesting in $\pi$ that is counted by $\nest_i(\pi)$. The term $+1$ accounts for the fact that $(\pone(\pi),i,\pi(i),1)$ is not a nesting while it does contribute to the side number $s_{\pi(i)-1}$.
    \item $\pi(i)>i$ and $i<\pone(\pi)$. We want to show that $\nest_i(\pi)=s_{\pi(i)-1}$. The argument is similar to (iii).
\end{enumerate}
Applying \eqref{def:vnest}, we see that in all cases we have indeed $\vnest_i(\pi)=s_{\varkappa(\pi)^{-1}(i)}$, and the proof is now completed.
\end{proof}

\section*{Acknowledgement}
The first author was supported by
the National Natural Science Foundation of China grant 11701420. The second author was supported by the National Natural Science Foundation of China grant 12171059 and the Natural Science Foundation Project of Chongqing (No.~cstc2021jcyj-msxmX0693).

\end{document}